\documentclass[11pt]{amsart}
\usepackage{amsmath}
\usepackage{amssymb}
\usepackage{amsthm}
\usepackage{amscd}
\usepackage{enumerate}
\usepackage{enumitem}
\usepackage{verbatim}
\usepackage[all]{xy}
\usepackage{mathrsfs}
\usepackage{mathabx}
\usepackage{url}

\theoremstyle{plain}
\newtheorem{thm}{Theorem}[section]
\newtheorem{prop}[thm]{Proposition}
\newtheorem{lem}[thm]{Lemma}
\newtheorem{cor}[thm]{Corollary}

\theoremstyle{definition}
\newtheorem{dfn}[thm]{Definition}
\newtheorem{rmk}[thm]{Remark}

\newcommand{\AD}{\mathrm{AD}}

\newcommand{\alg}{\mathrm{alg}}

\newcommand{\Aut}{\mathrm{Aut}}

\newcommand{\Cot}{\mathrm{Cot}}

\newcommand{\Cusps}{\mathrm{Cusps}}

\newcommand{\Der}{\mathrm{Der}}

\newcommand{\dR}{\mathrm{dR}}
\newcommand{\DR}{\mathrm{DR}}

\newcommand{\End}{\mathrm{End}}

\newcommand{\Hom}{\mathrm{Hom}}

\newcommand{\id}{\mathrm{id}}
\newcommand{\inner}{\mathrm{in}}

\newcommand{\Img}{\mathrm{Im}}

\newcommand{\Isom}{\mathrm{Isom}}
\newcommand{\KS}{\mathrm{KS}}
\newcommand{\Ker}{\mathrm{Ker}}

\newcommand{\Lie}{\mathrm{Lie}}

\newcommand{\prjt}{\mathrm{pr}}

\newcommand{\Res}{\mathrm{Res}}

\newcommand{\si}{\mathrm{si}}

\newcommand{\Spec}{\mathrm{Spec}}

\newcommand{\Sym}{\mathrm{Sym}}

\newcommand{\TD}{\mathrm{TD}}

\newcommand{\td}{\triangledown}

\newcommand{\univ}{\mathrm{un}}

\newcommand{\sHom}{\mathscr{H}\!\!\mathit{om}}
\newcommand{\sIsom}{\mathscr{I}\!\!\mathit{som}}

\newcommand{\cC}{\mathcal{C}}
\newcommand{\cD}{\mathcal{D}}
\newcommand{\cE}{\mathcal{E}}
\newcommand{\cF}{\mathcal{F}}
\newcommand{\cG}{\mathcal{G}}
\newcommand{\cH}{\mathcal{H}}

\newcommand{\cL}{\mathcal{L}}
\newcommand{\cM}{\mathcal{M}}

\newcommand{\cO}{\mathcal{O}}
\newcommand{\cP}{\mathcal{P}}

\newcommand{\cR}{\mathcal{R}}

\newcommand{\cZ}{\mathcal{Z}}

\newcommand{\Ga}{\mathbb{G}_\mathrm{a}}

\newcommand{\vep}{\varepsilon}

\newcommand{\bA}{\mathbb{A}}
\newcommand{\bB}{\mathbb{B}}

\newcommand{\bF}{\mathbb{F}}

\newcommand{\bP}{\mathbb{P}}

\newcommand{\bV}{\mathbb{V}}

\newcommand{\sB}{\mathscr{B}}

\newcommand{\sM}{\mathscr{M}}

\newcommand{\sT}{\mathscr{T}}

\newcommand{\frn}{\mathfrak{n}}

\newcommand{\Kinf}{K_\infty}
\newcommand{\Cinf}{\mathbb{C}_{\infty}}

\makeatletter
    
    \@addtoreset{equation}{section}
\makeatother

\makeatletter
\renewcommand{\p@enumii}{}
\makeatother

\begin{document}

\title[On autoduality of Drinfeld modules]{On autoduality of Drinfeld modules and Drinfeld modular forms}

\author{Shin Hattori}
\address{Department of Natural Sciences, Tokyo City University, 1-28-1 Tamazutsumi, Setagaya-ku, Tokyo 158-8557, Japan}
\email{hattoris@tcu.ac.jp}


\date{\today}


\begin{abstract}
	Let $\bF_q$ be the field of $q$ elements and let $A=\bF_q[t]$ be the polynomial ring over $\bF_q$. Let $\frn\in A\setminus \bF_q$ be a monic polynomial with a 
	prime factor of degree prime to $q-1$. Let
	$\Delta$ be a subgroup of $(A/(\frn))^\times$ such that the map $\Delta\to (A/(\frn))^\times/\bF_q^\times$ is bijective. Let $S$ be a scheme over $A[1/\frn]$ 
	and let $R$ be an $A[1/\frn]$-algebra which is an excellent regular domain.
	In this paper, we show that any Drinfeld module $E$ of rank two over $S$ admitting a $\Gamma_1^\Delta(\frn)$-structure is isomorphic to its Taguchi dual $E^D$. 
	As an application, 
	for the Hodge bundle $\bar{\omega}$ on the Drinfeld modular curve $X$ of level $\Gamma_1^\Delta(\frn)$ over $R$, we give a dual 
	Kodaira--Spencer isomorphism of the form $\bar{\omega}^{\otimes 2}\simeq \Omega^1_{X/R}(2\Cusps)$, in contrast with the usual one 
	in the Drinfeld case in which $E^D$ is involved.
\end{abstract}

\maketitle



\section{Introduction}\label{SecIntro}

Let $p$ be a rational prime, $q>1$ a power of $p$ and $\bF_q$ the field of $q$ elements. Let $t$ be an indeterminate, $A=\bF_q[t]$, $K=\bF_q(t)$, $K_\infty=\bF_q((1/t))$ and 
$\Cinf$ the $(1/t)$-adic completion of an algebraic closure of $\Kinf$. Let $\Omega=\Cinf\setminus\Kinf$ be the Drinfeld upper half plane,
equipped with a natural structure of a rigid analytic variety over $\Cinf$.

Drinfeld modular forms are analogues over $\bF_q(t)$ of elliptic modular forms. They are rigid analytic functions on $\Omega$ satisfying a transformation 
condition and regularity at cusps similar to 
those for elliptic modular forms. 
In the theory of Drinfeld modular forms, Drinfeld modules of rank two, which are line bundles with an exotic action of $A$, 
play the role that elliptic curves play in the theory of elliptic modular forms. 
Though harmonic cocycles provide a powerful tool to understand Drinfeld modular forms combinatorially \cite{Tei}, a geometric 
approach using Drinfeld modular curves and their compactifications still seems meaningful to investigate (see for example \cite{BSCR,Boeckle,dVr,GV, Ha_HTT}).

One problem we encounter in the geometric study of Drinfeld modular forms is the lack of autoduality for Drinfeld modules. 
Any elliptic curve $\cE$ has autoduality: there exists a natural isomorphism from $\cE$ to its dual.   
On the other hand, there is a notion of a dual Drinfeld module due to Taguchi \cite{Tag}. For a Drinfeld module $E$ of rank two, the dual $E^D$ is again a Drinfeld 
module of rank two.
However, we do not have an isomorphism $E\simeq E^D$ of Drinfeld modules in general. 

This lack affects the theory in different ways, especially on the shape of the Kodaira--Spencer isomorphism in the Drinfeld case. 
For any Drinfeld module $E$, we denote by $\omega_E$ the sheaf of invariant differentials on $E$ and by $\cH_{\dR}(E)$ 
Gekeler's de Rham sheaf of $E$ 
\cite[Definition 3.4]{Gek_dR}, so that we have a 
Hodge filtration 
\[
\xymatrix{
0\ar[r] & \omega_E \ar[r] & \cH_\dR(E)\ar[r] & \omega^\vee_{E^D}\ar[r] & 0.
}
\]

For any $\frn\in A\setminus \bF_q$, 
we have a Drinfeld modular curve $Y(\frn)
$ over $A[1/\frn]$ of full level $\frn$ classifying pairs of a Drinfeld module of rank two and a $\Gamma(\frn)$-structure on it. Let $E_\univ$ be the universal 
Drinfeld 
module over $Y(\frn)$. 
Then, using the Hodge filtration on $\cH_\dR(E_\univ)$, Gekeler \cite[Theorem 6.11]{Gek_dR} defined the (dual) Kodaira--Spencer isomorphism as
\[
\KS^\vee: \omega_{E_\univ}\otimes_{\cO_{Y(\frn)}} \omega_{E^D_\univ}\to\Omega^1_{Y(\frn)/A[1/\frn]}, 
\]
in contrast with the case of elliptic modular curves where we have such an isomorphism of the form $\omega_{E_\univ}^{\otimes 2}\simeq \Omega^1_{Y(\frn)/A[1/\frn]}
$. 

Since Drinfeld modular forms are sections of tensor powers of $\omega_{E_\univ}$, it is desirable to have a Kodaira--Spencer isomorphism of the latter
form. The aim of this paper is to construct such an isomorphism.

Let $R$ be an $A[1/\frn]$-algebra which is an excellent regular domain. Suppose that $\frn$ has a prime factor of degree prime to $q-1$.
Let $\Delta\subseteq (A/(\frn))^\times$ be a subgroup such that the map $\Delta\to  (A/(\frn))^\times/\bF_q^\times$ is an isomorphism.
In \cite{Ha_DMC}, a level structure over $A[1/\frn]$ called a $\Gamma_1^\Delta(\frn)$-structure is studied.
It is a pair $(\lambda,\mu)$ of a $\Gamma_1(\frn)$-structure $\lambda$ and an additional datum $\mu$ such that the definition of $\mu$ depends
on the choice of $\Delta$
(Definition \ref{DfnLevelGamma1NDeltaStr}).

Let $Y_1^\Delta(\frn)_R$ be the Drinfeld modular curve of level $\Gamma_1^\Delta(\frn)$ over $R$ and
let $X_1^\Delta(\frn)_R$ be its compactification. Let $E_\univ^\Delta$ be the universal Drinfeld module of rank two over $Y_1^\Delta(\frn)_R$.
Let $\omega_\univ^\Delta=\omega_{E^\Delta_\univ}$ and $\cH_{\dR,\univ}^\Delta=\cH_\dR(E_\univ^\Delta)$. We denote by $\bar{\omega}_\univ^\Delta$ 
the natural extension of $\omega_\univ^\Delta$ to $X_1^\Delta(\frn)_R$.
Let $\Cusps_R^\Delta$ be the effective Cartier divisor of cusps on $X_1^\Delta(\frn)_R$.
Then the main theorem of this paper is the following.

\begin{thm}\label{ThmMain}
	\begin{enumerate}
		\item\label{ThmMain-Autodual} (Theorem \ref{ThmAutodual}) Let $S$ be a scheme over $A[1/\frn]$. Then any Drinfeld module $E$ of rank two over $S$ admitting 
		a $\Gamma_1^\Delta(\frn)$-structure has 
		an 
		autoduality. Namely, for any $\Gamma_1^\Delta(\frn)$-structure $(\lambda,\mu)$ on $E$, there
		exists an isomorphism 
		\[
		\AD_{(E,\lambda,\mu)}: E\to E^D
		\] 
		of Drinfeld modules over $S$.
		
			\item\label{ThmMain-Hodge} (Theorem \ref{ThmExtProjToLie}) 
			Let $\AD_{E_\univ^\Delta}: E_\univ^\Delta\to (E_\univ^\Delta)^D$ be the isomorphism of (\ref{ThmMain-Autodual}) for the universal object
			$(E_\univ^\Delta,\lambda_\univ,\mu_\univ)$ over $Y_1^\Delta(\frn)_R$. Then
			the exact sequence 
		\[
		\xymatrix{
			0 \ar[r] & \omega_\univ^\Delta \ar[r] & \cH_{\dR,\univ}^\Delta \ar[r]^{\prjt} & (\omega_{\univ}^\Delta)^\vee \ar[r] &0
		}
		\]
		obtained from the Hodge filtration by composing the inverse of 
		the dual $(\AD_{E^\Delta_\univ}^*)^\vee$ of the isomorphism $
		\AD_{E^\Delta_\univ}^*:\omega_{(E^\Delta_\univ)^D}\to \omega_{E_\univ^\Delta}$
		extends to an exact sequence
		\[
		\xymatrix{
			0 \ar[r] & \bar{\omega}_\univ^\Delta \ar[r] & \bar{\cH}_{\dR,\univ}^\Delta \ar[r]& (\bar{\omega}_{\univ}^\Delta)^\vee \ar[r] &0
		}
		\]
		of finite locally free sheaves on $X_1^\Delta(\frn)_R$.
		
		\item (Theorem \ref{ThmKS}) Let \[
		\KS^\vee: \omega_{E_\univ^\Delta}\otimes_{\cO_{Y_1^\Delta(\frn)_R}} \omega_{(E^\Delta_\univ)^D}\to\Omega^1_{Y_1^\Delta(\frn)_R/R}
		\]
		be the Kodaira--Spencer isomorphism for $E^\Delta_\univ$. 
		Then the isomorphism $\KS^\curlyvee: (\omega_\univ^\Delta)^{\otimes 2}\to \Omega^1_{Y_1^\Delta(\frn)_R/R}$ obtained by composing $
		\KS^\vee$ and $
		(\AD_{E^\Delta_\univ}^*)^{-1}$ 
		extends to an isomorphism
		\[
		\bar{\KS}^\curlyvee:  (\bar{\omega}_\univ^\Delta)^{\otimes 2}\to \Omega^1_{X_1^\Delta(\frn)_R/R}(2\Cusps_R^\Delta)
		\]
		of invertible sheaves on $X_1^\Delta(\frn)_R$.

			\item (Theorem \ref{ThmExtDeRhamPair}) There exists an $\cO_{X_1^\Delta(\frn)_R}$-linear
			alternating perfect pairing
			\[
			\overline{\langle-,-\rangle}_\univ^\Delta:\bar{\cH}_{\dR,\univ}^\Delta\otimes_{\cO_{X_1^\Delta(\frn)_R}}\bar{\cH}_{\dR,\univ}^\Delta\to 
			\cO_{X_1^\Delta(\frn)_R}
			\]
			which induces the canonical pairing
			\[
			\bar{\omega}^\Delta_\univ\otimes_{\cO_{X_1^\Delta(\frn)_R}} (\bar{\omega}^\Delta_\univ)^\vee\to\cO_{X_1^\Delta(\frn)_R}
			\]
			via the exact sequence of (\ref{ThmMain-Hodge}).
	\end{enumerate}
\end{thm}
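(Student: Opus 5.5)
The plan is to prove the four parts in order, since each uses the previous one. The heart of the matter is part (\ref{ThmMain-Autodual}), and the hardest step is controlling everything at the cusps.

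\emph{Part (\ref{ThmMain-Autodual}).} I would first compute Taguchi's dual $E^D$ for a rank-two Drinfeld module $E=(\cL,\phi)$ with $\phi_t=t+a\tau+b\tau^2$, where $a\in\cL^{\otimes(1-q)}$ and $b\in\cL^{\otimes(1-q^2)}$ with $b$ invertible. I expect an isomorphism of the form
\[
E^D\simeq E\otimes\cM,
\]
where $\cM$ is a line bundle built from $\cL$ and $b$, by which I mean the Drinfeld module obtained by twisting the underlying line bundle by $\cM$ and transporting the coefficients. The leading coefficient $b$ should give a canonical trivialization of $\cM^{\otimes(q-1)}$. Locally on $S$ this identifies isomorphisms $E\to E^D$ with $(q-1)$-th roots of a unit attached to $b$. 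The task is therefore to produce a canonical global trivialization of $\cM$ compatible with the one on $\cM^{\otimes(q-1)}$.

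This is exactly what the $\Gamma_1^\Delta(\frn)$-structure should supply. The Weil pairing of the $\Gamma_1(\frn)$-structure $\lambda$ lands in the $\frn$-torsion of a rank-one (Carlitz-type) module attached to $\det E$, which is essentially $\cM$. The datum $\mu$, normalized through the choice of $\Delta$, then pins down a generator of $\cM$ up to an element of $\Delta\cap\bF_q^\times=1$. I would use the hypothesis that $\frn$ has a prime factor of degree prime to $q-1$ to show that the resulting $(q-1)$-th root is unique, hence glues. Its compatibility with base change gives $\AD_{(E,\lambda,\mu)}$ functorially on all $S$.

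\emph{Parts (\ref{ThmMain-Hodge}) and (3).} On $Y_1^\Delta(\frn)_R$ both statements follow by composing with $\AD^*$. Extension across the cusps is a local question on $X_1^\Delta(\frn)_R$, which is normal because $R$ is excellent regular. So it suffices to work over the formal completion at each cusp, using the Tate--Drinfeld module there. Over that base I would write down $\AD$ explicitly via the uniformization and show that the induced map $\bar{\omega}\to\bar{\omega}_{E^D}$ extends with neither zero nor pole, i.e.\ that $\AD^*$ extends to an isomorphism of the canonical extensions. This is the step I expect to be the main obstacle. What I would try first is to show that $\mu$ specializes at the cusp to a unit of the Tate parameter ring, by computing the Weil pairing on the Tate--Drinfeld module.

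Granting that, part (\ref{ThmMain-Hodge}) follows from the known extension $\bar{\cH}_{\dR}$ of Gekeler's de Rham sheaf with its Hodge filtration $0\to\bar\omega\to\bar\cH_\dR\to\bar\omega_{E^D}^\vee\to0$, by composing with the extended $(\AD^*)^\vee$. Part (3) follows likewise from the extension of Gekeler's $\KS^\vee$ to $\bar\omega\otimes\bar\omega_{E^D}\simeq\Omega^1_{X/R}(2\Cusps)$. The latter can be checked on the Tate--Drinfeld module by the usual $q$-expansion computation, where $d$ of the parameter has a double pole in the Drinfeld setting.

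\emph{Part (4).} Here I would use the perfect duality pairing $\cH_\dR(E)\otimes\cH_\dR(E^D)\to\cO$ coming from Taguchi duality, compatible with the Hodge filtrations. Composing it with $\AD$ gives a perfect pairing on $\cH_\dR(E)$ that restricts to the canonical pairing on $\omega\otimes\omega^\vee$. It extends over $X_1^\Delta(\frn)_R$ by part (\ref{ThmMain-Hodge}) and normality, since perfectness can be checked on the Tate--Drinfeld module. For alternation it suffices, since both sheaves are rank two and $\omega$ is isotropic, to check that $\langle x,x\rangle=0$. I would verify this by comparing $\AD^D\circ\iota$ with $\AD$, where $\iota:E\simeq E^{DD}$, and showing that the relevant sign is the one forcing alternation. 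This can be done on a local frame via the explicit formula for $E^D$.
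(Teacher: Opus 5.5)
Your architecture is sound (reduce autoduality to a section $H$ of $\cL_E^{\otimes -1-q}$ with $H^{\otimes q-1}=-\alpha_2^E(t)$, then test everything on the Tate--Drinfeld module). However, the step you flag as the main obstacle is left open, and the route you suggest for it targets the wrong thing. The datum $\mu$ only selects \emph{which} $(q-1)$-th root of $-\alpha_2^E(t)$ is used, and all roots have the same order along the cusps. So no Weil pairing computation on $\TD(\Lambda)$ is needed, and it is unclear what it would mean for $\mu$, a section of the finite \'etale scheme $I_{(E,\lambda)}/\Delta$, to ``specialize to a unit''. The missing idea is a short integrality argument. On $\cP^\Delta_R$ write $H=b\,(dX)^{\otimes q+1}$. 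Then $b^{q-1}=-a_2$ with $a_2\in x^{q-1}R[[x^{q-1}]]^\times$, so $(b/x)^{q-1}$ is a unit of $R[[x^{q-1}]]$. Hence $b/x$ and $x/b$ are integral over $R[[x^{q-1}]]$, and $b\in x(\sB^\Delta_R)^\times$ because $\sB^\Delta_R$ is the normalization.

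This also shows that your unspecified ``canonical extension'' $\bar\omega_{E^D}$ matters. We have $\AD^*(dY)=b\,dX$ and Gekeler's $\KS^\vee(dX\otimes dY)=l(x)\,dx$ with $l(x)\in x^{-1}R[[x]]^\times$. So for the natural choice $\bar\omega^{\otimes -q}$ (generated by $dY$ at the cusps), $\AD^*$ has a simple zero along the cusps, and $\KS^\vee$ maps onto $\Omega^1_{X_1^\Delta(\frn)_R/R}(\Cusps_R^\Delta)$, not $\Omega^1_{X_1^\Delta(\frn)_R/R}(2\Cusps_R^\Delta)$. Both of your claims hold only for the extension generated by $x^{-1}dY$. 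That is indeed the one cut out by $\bar{\cH}_{\dR,\univ}^\Delta=\sB^\Delta_R dX\oplus\sB^\Delta_R\eta$, since $\pi(\eta)=-x^2l(x)\frac{d}{dY}$ and $x^2l(x)\in x(\sB^\Delta_R)^\times$. With this choice your two-step factorization is equivalent to the paper's direct checks $\prjt(\eta)=-x^2l(x)b^{-1}\frac{d}{dX}\in(\sB^\Delta_R)^\times\frac{d}{dX}$ and $\KS^\curlyvee((dX)^{\otimes 2})=b^{-1}l(x)\,dx\in x^{-2}(\sB^\Delta_R)^\times dx$. For (3) you must also justify that the Tate--Drinfeld computation computes the pull-back of the universal $\KS^\vee$, i.e.\ $(\sT^\td)^*\Omega^1_{X_1^\Delta(\frn)_R/R}\simeq\Omega^1_{\sB^\Delta_R/R}$. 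This needs formal \'etaleness of $R[1/j]\to R[[1/j]]$, which the paper proves only for $F$-finite $R$; it reaches general $R$ by reducing to $R=A[1/\frn]$ via base change.

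For (4), the ``perfect duality pairing $\cH_\dR(E)\otimes\cH_\dR(E^D)\to\cO$ coming from Taguchi duality'' is not available in the paper, and using it would also require extending $\cH_\dR(E^D)$ over the cusps. Your reduction for alternation is not a reduction, since $\langle v,v\rangle=0$ for all $v$ is the definition. With $\omega$ isotropic you would still need $\langle e,e\rangle=0$ for a complementary $e$ plus skew-symmetry, i.e.\ the unverified sign comparison of $\AD^D\circ\iota$ with $\AD$. Two simpler options exist. One is the paper's explicit pairing $\langle\varphi,\varphi'\rangle=(\varphi_1\otimes\varphi'_2-\varphi'_1\otimes\varphi_2)\otimes(f^*\mathfrak{h})^{\otimes -q}$: it is visibly alternating and perfect on $Y_1^\Delta(\frn)_R$, and it extends because $\langle dX,\eta\rangle=x^2l(x)a_2b^{-q}\in(\sB^\Delta_R)^\times$. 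The other is to take determinants in the extended sequence of (2), $\bigwedge^2\bar{\cH}_{\dR,\univ}^\Delta\simeq\bar\omega_\univ^\Delta\otimes(\bar\omega_\univ^\Delta)^\vee\simeq\cO_{X_1^\Delta(\frn)_R}$, which gives alternation, perfectness and compatibility with the canonical pairing at once.

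Your part (1) is essentially the paper's intrinsic route (the isomorphism $Y_1^h(\frn)_R\simeq Y_1^\Delta(\frn)_R$), not its main argument via Gekeler's $h$-function and the $x$-expansion principle. It works, but state the mechanism correctly. The $h$-structures (equivalently, isomorphisms $C|_S\simeq\det E$) form an $\bF_q^\times$-torsor mapping equivariantly to the $\bF_q^\times$-torsor $\Isom(C[\frn],(\det E)[\frn])/\Delta$. Existence uses $\Delta\cdot\bF_q^\times=(A/(\frn))^\times$ and uniqueness uses $\Delta\cap\bF_q^\times=1$; the prime-factor hypothesis only guarantees that such a $\Delta$ exists. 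You also need the Weil pairing $\bigwedge^2E[\frn]\simeq(\det E)[\frn]$ in families. The paper constructs it from $\tau$-sheaves over Noetherian normal bases and applies it to the universal object.
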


Let us briefly explain an idea of the proof of Theorem \ref{ThmMain}.
	Let $S$ be a scheme over $A[1/\frn]$ and let $E$ be a Drinfeld module of rank two over $S$. Let $\cL_E$ be the underlying invertible sheaf of $E$. Then the 
underlying 
invertible sheaf of $E^D$ is $\cL_E^{\otimes -q}$.
Write the morphism of multiplying $t$ on $E$ as
\[
\Phi^E_t=\theta+ \alpha_1^E \tau+ \alpha_2^E\tau^2,\quad \alpha_i^E\in \cL_E^{\otimes 1-q^i}(S).
\]
It is known \cite[Remark 2.20]{Ha_HTT} that if we have a section $H\in \cL_E^{\otimes -1-q}(S)$ satisfying $H^{\otimes q-1}=-\alpha^E_2$,
then multiplying $H$ gives an autoduality.

A key observation is that, over $\Cinf$, Gekeler's $h$-function gives such a section, since the $h$-function 
satisfies the relation $h^{q-1}=-g_2$ for the discriminant function 
$g_2$. Then the $x$-expansion principle \cite[Proposition 4.8 (i)]{Ha_HTT} assures that the section descends to that on $X_1^\Delta(\frn)_R$, which gives
the autoduality of Theorem \ref{ThmMain} (\ref{ThmMain-Autodual}). The other assertions follow from gluing in the style of Beauville--Laszlo \cite{BeauLasz} and
a study around cusps using Tate--Drinfeld modules.

The organization of the paper is as follows. In \S\ref{SecDM}, we recall the definitions of Drinfeld modules, Drinfeld modular curves involved and Tate--Drinfeld 
modules. In \S\ref{SecKS}, we recall Gekeler's theory of biderivations, de Rham sheaves and Kodaira--Spencer maps for Drinfeld modules. We also give an explicit 
description of the Hodge filtration of a Drinfeld module over an affine scheme when the underlying invertible sheaf is trivial. In \S\ref{SecExt}, we prove a 
lemma (Lemma \ref{LemExt}) to construct 
extensions in Theorem \ref{ThmMain}, and also give a natural extension of $\cH_{\dR,\univ}^\Delta$ to $X_1^\Delta(\frn)_R$. 
Then the main theorems are proved in \S\ref{SecAD}. In the course of the proof, we also fill some gaps in \cite{Ha_HTT,Ha_DMC} concerning the base change 
compatibility
of sheaves of differentials by using the $F$-finiteness of $A[1/\frn]$ (Remarks \ref{RmkFixHTT} and \ref{RmkFixDMC}).
We also present an intrinsic construction of $Y_1^\Delta(\frn)_R$ independently of the choice of $\Delta$ (Theorem \ref{ThmHstrVSGammaDeltaStr}),
though it is not used in the proof of the main theorems.

\subsection*{Acknowledgements} The author would like to thank O\u{g}uz Gezm\.{i}\c{s} and Sriram Chinthalagiri Venkata for asking about the extension of 
the de Rham sheaf 
and related discussions, which encouraged him to write this paper. This work was supported by JSPS KAKENHI Grant Number JP23K03078. 

\section{Drinfeld modules and Drinfeld modular curves}\label{SecDM}

\subsection{Drinfeld modules}\label{SubsecDM}

Let $S$ be a scheme over $\bF_q$. For an $\cO_S$-module $\cF$ and schemes $X$ and $T$ over $S$, we denote by $\cF|_T$ and $X|_T$ the pull-backs of $\cF$ and $X$ by 
the morphism
$T\to S$. When $T=\Spec(B)$, we often denote them by $\cF|_B$ and $X|_B$.
We write $\cF^{(q)}$ and $X^{(q)}$ for the pull-backs of $\cF$ and $X$ by the absolute $q$-th power Frobenius morphism $\sigma_S:S\to S$.
For any $\bF_q$-algebra $B$, the absolute $q$-th power Frobenius map on $B$ is denoted by $\sigma$. For any scheme $S$ over $A$, we denote by $\theta$ the image of 
$t$ by the structure map $A\to \cO_S(S)$. For any $\cO_S$-module $\cF$ and any $B$-module $M$, we denote by $\cF^\vee=\sHom_{\cO_S}(\cF,\cO_S)$ and 
$M^\vee=\Hom_B(M,B)$ their linear duals. When $S=\Spec(B)$ is affine, we often identify a quasi-coherent sheaf $\cF$ on $S$ with the $B$-module $\cF(S)$ abusively.

For a group scheme $\cG$ over a scheme $S$, we denote by 
\[
\Cot(\cG)=\omega_{\cG},\quad \Lie(\cG)
\]
the sheaf of invariant differentials on $\cG$ and the Lie algebra of $\cG$, respectively. 
We have $\Lie(\cG)=\omega_{\cG}^\vee$, and if the $\cO_S$-module $\omega_\cG$ is 
finite locally 
free, then the natural homomorphism $\omega_\cG\to \Lie(\cG)^\vee$ is an isomorphism.

Let $\cL$ be an invertible $\cO_S$-module. Let 
\[
\cL^{\otimes -1}:=\cL^\vee=\sHom_{\cO_S}(\cL,\cO_S).
\]
We denote by
\[
\bV_*(\cL)=\Spec(\Sym_{\cO_S}(\cL^{\otimes -1}))
\]
the covariant line bundle associated with $\cL$. It represents the functor over $S$ defined by
\[
T\mapsto (\cL|_T)(T),
\]
so that $\bV_*(\cL)$ admits a natural action of the additive group scheme $\Ga=\Spec(\cO_S[Z])$ over $S$. 
Moreover, for any scheme $T$ over $S$, there exists a $\Ga$-equivariant natural isomorphism of group schemes
\[
\bV_*(\cL|_T)\to \bV_*(\cL)|_T.
\]


We have an isomorphism of $\cO_S$-modules
\[
\cL^{(q)}\to \cL^{\otimes q},\quad 1\otimes l\mapsto l^{\otimes q},
\]
which induces a $\Ga$-equivariant isomorphism of group schemes 
\[
\bV_*(\cL)^{(q)}\simeq \bV_*(\cL^{(q)})\to \bV_*(\cL^{\otimes q}).
\] 
Under this identification, the relative $q$-th power Frobenius homomorphism 
\[
\tau:\bV_*(\cL)\to \bV_*(\cL)^{(q)}\simeq  \bV_*(\cL^{\otimes q})
\] 
is given by the natural inclusion
\[
\Sym_{\cO_S}(\cL^{\otimes -q})\to \Sym_{\cO_S}(\cL^{\otimes -1}).
\]
In particular, for any scheme $T$ over $S$, the map that $\tau$ induces on the $T$-valued points is
\[
\bV_*(\cL)(T)\to\bV_*(\cL^{\otimes q})(T),\quad l\mapsto l^{\otimes q}.  
\]

For any $\bF_q$-module schemes $\cG$ and $\cH$ over $S$, we denote by 
\[
\sHom_{\bF_q,S}(\cG,\cH),\quad \Hom_{\bF_q,S}(\cG,\cH)
\] 
the sheaf and the module of $\bF_q$-linear homomorphisms over $S$. Then we have a natural isomorphism of $\cO_S$-modules
\[
\bigoplus_{m\geq 0} \cL^{\otimes -q^m}\to \sHom_{\bF_q,S}(\bV_*(\cL),\Ga),\quad l\mapsto (Z\mapsto l). 
\]

\begin{dfn}\label{DfnDM}
	Let $r\geq 1$ be an integer and let $S$ be a scheme over $A$. A Drinfeld module of rank $r$ over $S$ is a pair $E=(\cL_E,\Phi^E)$ consisting of an invertible 
	$\cO_S$-module $\cL_E$ and an $\bF_q$-algebra homomorphism
	\[
	\Phi^E:A\to \End_{\bF_q,S}(\bV_*(\cL_E)),\quad a\mapsto \Phi^E_a=\sum_{i=0}^{r\deg(a)}\alpha_i^E(a)\tau^i
	\]
	with $\alpha^E_i(a)\in \cL_E^{\otimes 1-q^i}(S)$ such that $\alpha^E_0(a)\in \cO_S(S)$ agrees with the image of $a$ by the structure map $A\to \cO_S(S)$ and $
	\alpha^E_{r\deg(a)}(a)$ is nowhere vanishing.  
	\end{dfn}
	
	\begin{dfn}\label{DfnMorDM}
	For any Drinfeld modules $E$ and $F$ of some rank over $S$, we define a morphism $E\to F$ of Drinfeld modules over $S$ as a homomorphism of group schemes
	$\bV_*(\cL_E)\to \bV_*(\cL_F)$ over $S$ which is compatible with $A$-actions given by $\Phi^E$ and $\Phi^F$. By \cite[Proposition 3.6]{Pink}, any such morphism 
	admitting a two-sided inverse is compatible with $\Ga$-actions, and thus it is induced by an isomorphism of $\cO_S$-modules $\cL_E\to \cL_F$. We refer to a 
	morphism of Drinfeld modules with a two-sided inverse as an isomorphism.
\end{dfn}

Let $S$ be a scheme over $A$ and let $E$ be a Drinfeld module of rank two over $S$. Let
\[
j_t(E):=\alpha_1^E(t)^{\otimes q+1}\otimes \alpha^E_2(t)^{\otimes -1}\in \cO_S(S).
\]
Then $j_t(E)$ depends only on the isomorphism class of $E$.

\subsection{Drinfeld modular curves}\label{SubsecDMC}

Let $C=\Spec(A[Z])$ be the Carlitz module over $A$. It is the Drinfeld module of rank one with trivial underlying invertible sheaf such that its $A$-action is 
given by
\[
\Phi^C_t=\theta + \tau.
\]
For any $A$-module schemes $\cG$ and $\cH$ over a scheme $S$, we denote by $\Isom_{A,S}(\cG,\cH)$ the set of isomorphisms $\cG\to \cH$ of $A$-module schemes over 
$S$, and by $\sIsom_{A,S}(\cG,\cH)$ the functor over $S$ associating with a scheme $T$ the set
\[
\sIsom_{A,S}(\cG,\cH)(T):=\Isom_{A,T}(\cG|_T,\cH|_T).
\]

Let $\frn\in A\setminus \bF_q$ be a monic polynomial. For a scheme $S$ over $A$ and a Drinfeld module $E$ of some rank over $S$, let
\[
E[\frn]=\Ker(\Phi^E_\frn: E\to E).
\]
It is a finite locally free $A$-module scheme over $S$. When $S$ is over $A[1/\frn]$, it is also \'{e}tale. 
For a finite group $G$, we denote by $\underline{G}$ the constant group scheme associated with $G$.

\begin{dfn}\label{DfnLevelNStr}
	Let $S$ be a scheme over $A[1/\frn]$ and let $E$ be a Drinfeld module of rank two over $S$. A $\Gamma(\frn)$-structure on $E$ is an isomorphism 
	\[
	\alpha: \underline{(A/(\frn))^2}\to E[\frn]
	\]
	of $A$-module schemes over $S$.
\end{dfn}

It is known that the functor over $A[1/\frn]$ associating with a scheme $S$ the set of isomorphism classes of pairs $(E,\alpha)$ of a 
Drinfeld module $E$ of rank two over $S$ and a $\Gamma(\frn)$-structure $\alpha$ on $E$ is representable by an affine scheme $Y(\frn)$ which is smooth of 
relative dimension one over $A[1/\frn]$.  

\begin{dfn}\label{DfnLevelGamma1NStr}
	Let $S$ be a scheme over $A[1/\frn]$ and let $E$ be a Drinfeld module of rank two over $S$. A $\Gamma_1(\frn)$-structure on $E$ is a closed immersion 
	\[
	\lambda: C[\frn]|_S\to E[\frn]
	\]
	of $A$-module schemes over $S$.
\end{dfn}  

By \cite[p.~81]{Ha_DMC}, the functor sending a scheme $S$ over $A[1/\frn]$ to the set of isomorphism classes of pairs $(E,\lambda)$ of a 
Drinfeld module $E$ of rank two over $S$ and a $\Gamma_1(\frn)$-structure $\lambda$ on $E$ is representable by an affine scheme $Y_1(\frn)$ which is smooth of 
relative dimension one over $A[1/\frn]$.

Let $\lambda$ be a $\Gamma_1(\frn)$-structure on $E$. Then the $A$-module scheme $E[\frn]/\Img(\lambda)$ is finite \'{e}tale over $S$ and the functor
\[
\sIsom_{A,S}(\underline{A/(\frn)},E[\frn]/\Img(\lambda))
\]
is represented by a finite \'{e}tale $(A/(\frn))^\times$-torsor $I_{(E,\lambda)}$ over $S$.

Consider the group
\[
\Gamma_1(\frn)=\left\{\gamma=\begin{pmatrix}
	a&b\\c&d
\end{pmatrix}\in \mathit{SL}_2(A)\ \middle|\ \gamma\equiv
\begin{pmatrix}
	1& *\\0&1
\end{pmatrix}\bmod \frn\right\},
\]
which acts on the Drinfeld upper half plane $\Omega$ by $\gamma(z)=\frac{az+b}{cz+d}$. In order to define a Hodge bundle on a model over $A[1/\frn]$ of the 
compactification of the curve 
$\Gamma_1(\frn)\backslash \Omega$, a level structure called a $\Gamma_1^\Delta(\frn)$-structure is studied \cite[\S3]{Ha_DMC}. It is defined as follows.

\begin{dfn}\label{DfnLevelDatum}
	An $h$-level pair $(\frn,\Delta)$ consists of a monic polynomial $\frn\in A\setminus\bF_q$ and a subgroup $\Delta\subseteq (A/
	(\frn))^\times$ satisfying the following conditions.
	\begin{enumerate}
		\item $\frn$ has a prime factor such that the degree of its residue field over $\bF_q$ is prime to $q-1$.
		\item The natural map $\Delta\to (A/(\frn))^\times/\bF_q^\times$ is an isomorphism. 
	\end{enumerate}
	Note that if $\frn$ satisfies the former condition, then there exists $\Delta$ satisfying the latter condition. The $h$ in the term reflects
	its relationship with Gekeler's $h$-function, as we will see in \S\ref{SubsecAD} and \S\ref{SubsecIntrinsic}. 
\end{dfn}

\begin{dfn}\label{DfnLevelGamma1NDeltaStr}
	Let $(\frn,\Delta)$ be an $h$-level pair.
	Let $S$ be a scheme over $A[1/\frn]$ and let $E$ be a 
	Drinfeld module of rank two over $S$. A $\Gamma_1^\Delta(\frn)$-structure on $E$ is a pair $(\lambda, \mu)$ of a $\Gamma_1(\frn)$-structure $\lambda$ on $E$ 
	and an element $\mu\in (I_{(E,\lambda)}/\Delta)(S)$.
\end{dfn}

Then the functor sending a scheme $S$ over $A[1/\frn]$ to the set of isomorphism classes of triples $(E,\lambda,\mu)$ consisting of a 
Drinfeld module $E$ of rank two over $S$ and a $\Gamma_1^\Delta(\frn)$-structure $(\lambda,\mu)$ on $E$ is representable by an affine scheme $Y_1^\Delta(\frn)$ 
which is a finite \'{e}tale $\bF_q^\times$-torsor over $Y_1(\frn)$. In fact, for the universal object $(E_\univ,\lambda_\univ)$ over $Y_1(\frn)$, we have
\[
Y_1^\Delta(\frn)=I_{(E_\univ,\lambda_\univ)}/\Delta.
\] 

Let $\bullet\in\{\emptyset,\Delta\}$. For any scheme $S$ over $A[1/\frn]$ and any Drinfeld module $E$ of rank two over $S$, 
we denote by $[\Gamma_1^\bullet(\frn)]_{E/S}$ be 
the finite \'{e}tale scheme over $S$ 
representing the functor over $S$ associating with a scheme $T$ the set of $\Gamma_1^\bullet(\frn)$-structures on $E|_T$.

\subsection{Tate--Drinfeld modules}\label{SubsecTD}

Here we recall the theory of Tate--Drinfeld modules, following the exposition of \cite[\S2.2]{Boeckle} and \cite[\S4]{Ha_DMC}.

Let $x$ be an indeterminate and let $A((x))$ be the Laurent power series ring over $A$. 
Let
\[
\Lambda=\left\{\Phi_a^C\left(\frac{1}{x}\right)\ \middle|\ a\in A\right\}\subseteq A((x)).
\]
Note that for any nonzero element $a\in A$, we have 
\[
\Phi^C_a\left(\frac{1}{x}\right)\in\frac{1}{x^{q^{\deg(a)}}}A[[x]]^\times.
\] 
Define
\[
e_{\Lambda}(X)=X\prod_{\alpha\in \Lambda\setminus\{0\}}\left(1-\frac{X}{\alpha}\right)\in X+ x X^2 A[[x]][[X]].
\]
Then $e_{\Lambda}(X)$ is entire with respect to the $x$-adic topology on $A[[x]]$. In particular, for any element $\beta$ of a finite extension 
$L/K((x))$, plugging in $X=\beta$ defines an element $e_\Lambda(\beta)\in L$. Moreover, since $\Lambda$ is stable under the action of 
$\bF_q^\times$ defined by $x\mapsto c^{-1}x$, each coefficient of $e_\Lambda(X)$ lies in $A[[x^{q-1}]]$.

We denote by $e_\Lambda^{-1}(X)$ the element of $A[[x^{q-1}]][[X]]$ satisfying $e_\Lambda(e_\Lambda^{-1}(X))=e_\Lambda^{-1}(e_\Lambda(X))=X$.
For any $a\in A$, let
\[
\Phi_a^\Lambda(X):=e_\Lambda(\Phi^C_a(e_\Lambda^{-1}(X)))\in A[[x^{q-1}]][[X]].
\]
Then it is an element of $A[[x^{q-1}]][X]$ having the following description. For $K=\bF_q(t)$ and an algebraic closure $K((x))^\alg$ of the Laurent power series 
field $K((x))$, 
consider the subgroup
\[
(\Phi_a^C)^{-1}(\Lambda)=\{y\in K((x))^\alg\mid \Phi^C_a(y)\in \Lambda\}.
\]
Let $\Sigma_a\subseteq (\Phi_a^C)^{-1}(\Lambda)$ be a complete set of representatives of 
\[
((\Phi^C_a)^{-1}(\Lambda)/\Lambda)\setminus\{0\}.
\]
Since $A$ is a regular domain, the proof of \cite[Proposition 2.9]{Boeckle} is valid in this case and we have
\[
\Phi_a^\Lambda(X)=aX\prod_{\beta\in \Sigma_a}\left(1-\frac{X}{e_\Lambda(\beta)}\right).
\]

Write
\[
\Phi^\Lambda_t=\theta+a_1 \tau+a_2 \tau^2,\quad a_i\in A[[x^{q-1}]].
\]
By \cite[Lemma 4.1]{Ha_DMC}, we have
\begin{equation}\label{EqnTDCoeff}
	a_1\in 1+x^{q-1}A[[x^{q-1}]],\quad a_2\in x^{q-1}A[[x^{q-1}]]^\times.
\end{equation}

\begin{dfn}\label{DfnTD}
	The Tate--Drinfeld module $\TD(\Lambda)$ is the Drinfeld module of rank two over $A((x))$ with trivial underlying 
	invertible sheaf defined by $
	\Phi^{\TD(\Lambda)}_t=\Phi^\Lambda_t$. 
	By (\ref{EqnTDCoeff}), it is obtained by
	the base extension of a Drinfeld module $\TD^\td(\Lambda)$ over $A((x^{q-1}))$ with trivial underlying invertible sheaf 
	along the natural map $A((x^{q-1}))\to A((x))$.
	
	For any $A$-algebra $B$, let 
	\[
	\TD(\Lambda)_{/B}:=\TD(\Lambda)|_{B((x))},\quad \TD^\td(\Lambda)_{/B}:=\TD^\td(\Lambda)|_{B((x^{q-1}))}
	\]
	be the base extensions along the natural maps $A((x))\to B((x))$ and $A((x^{q-1}))\to B((x^{q-1}))$, respectively.
\end{dfn}

\begin{rmk}
	In \cite{Ha_DMC}, it is assumed that the base ring $R_0$ is flat over $A[1/\frn]$. The flatness assumption is only used to assure the flatness of 
	the map $A\to R_0$ in the proof of \cite[(4.5)]{Ha_DMC}.
	We can drop the assumption by proving \cite[(4.5)]{Ha_DMC} over $A$ as above and then extend the base ring to $R_0$.
\end{rmk}

\section{Dual Drinfeld modules and the Kodaira--Spencer map}\label{SecKS}

\subsection{Biderivations and the de Rham module}\label{SubsecBider}

In this subsection, we recall the theory of biderivations due to Gekeler \cite[\S3]{Gek_dR}, following \cite[\S2]{PapRam}. 

Let $S$ be a scheme over $A$. Let $E$ be a Drinfeld module of rank two over $S$. Let $G=\Ga$ or $C$.

\begin{dfn}\label{DfnBider}
	\begin{enumerate}
		\item An $(E,G)$-biderivation over $S$ is an $\bF_q$-linear homomorphism 
		\[
		\delta: A\to \Hom_{\bF_q,S}(E,G),\quad a\mapsto \delta_a
		\] 
		satisfying the following conditions.
		\begin{itemize}
			\item $\delta_{\lambda}=0$ for any $\lambda\in \bF_q$.
			\item $\delta_{ab}=\Phi_a^G\circ \delta_b+\delta_a\circ \Phi_b^E$ for any $a,b\in A$.
		\end{itemize}
		The $\bF_q$-vector space of $(E,G)$-biderivations over $S$ is denoted by $\Der(E,G)$.
		
		\item The $\bF_q$-vector space of $(E,G)$-biderivations $\delta$ such that the map $\delta_t^*: \omega_G\to \omega_E$ 
		is zero is denoted by $\Der_0(E,G)$.
		
		\item An $(E,G)$-biderivation $\delta$ is said to be inner if there exists an element $f\in \Hom_{\bF_q,S}(E,G)$ satisfying
		$\delta=\delta_f$, where $\delta_f$ is the $(E,G)$-biderivation defined by
		\[
		(\delta_f)_a=f\circ \Phi_a^E-\Phi_a^G\circ f
		\]
		for any $a\in A$. The $\bF_q$-vector space of inner $(E,G)$-biderivations over $S$ is denoted by $
		\Der_\inner(E,G)$.
		
		\item An inner $(E,G)$-biderivation $\delta_f$ is said to be strictly inner if the map $f^*:\omega_G\to \omega_E$ is zero. The $\bF_q$-vector space of 
		strictly inner $(E,G)$-biderivations is 
		denoted by $\Der_\si(E,G)$.
	\end{enumerate}
\end{dfn}

The $\bF_q$-vector space $\Der(E,G)$ admits right and left $A$-actions defined by
\[
(\delta*c)_a=\delta_a\circ \Phi_c^E,\quad (c*\delta)_a=\Phi_c^G\circ \delta_a
\]
for any $c\in A$.
Then the $\bF_q$-subspaces of $\Der(E,G)$
\[
\Der_0(E,G)\supseteq \Der_\inner(E,G)\supseteq \Der_\si(E,G) 
\]
are stable under these $A$-actions, and two actions agree with each other on 
the quotient $\Der(E,G)/\Der_\inner(E,G)$ \cite[p.~412]{PapRam}. 

Note that we have an $\bF_q$-linear isomorphism
\begin{equation}\label{EqnDfnEVt}
	\Der(E,G)\to \Hom_{\bF_q,S}(E,G),\quad \delta\mapsto \delta_t.
\end{equation}
We identify these $\bF_q$-vector spaces via this isomorphism. Moreover, writing $G=\Spec(\cO_S[Z])$, we have an isomorphism of $\cO_S$-modules
\begin{equation}\label{EqnIdentfyHom}
	\bigoplus_{m\geq 0}\cL^{\otimes -q^m}_E\to \sHom_{\bF_q,S}(E,G),\quad l\mapsto (Z\mapsto l).
\end{equation}

Suppose that $S=\Spec(B)$ for an $A$-algebra $B$.
When $G=\Ga$, the linear action of $B$ on $\Ga$ gives structures of $B$-modules on $\Der(E,\Ga)$ and its $\bF_q$-subspaces defined above, which are compatible with 
the
left $A$-action $\delta\mapsto c*\delta$.

\begin{dfn}\label{DfnHdR}
	We call the $B$-module
	\[
	\DR(E,\Ga):=\Der_0(E,\Ga)/\Der_\si(E,\Ga)
	\]
	the de Rham module of $E$. We denote by $\cH_{\dR}(E)$ the $\cO_S$-module associated with $\DR(E,\Ga)$ and refer to it as the 
	de Rham sheaf of $E$.
\end{dfn}
Under the identification via the isomorphisms (\ref{EqnDfnEVt}) and (\ref{EqnIdentfyHom}),
we have isomorphisms of $\bF_q$-vector spaces
\begin{equation}\label{EqnIdentifyDer}
\begin{aligned}
	\bigoplus_{m\geq 1}\cL^{\otimes -q^m}_E(S)&\to \Der_0(E,G),\\ 
	\cL_E^{\otimes-q}(S)&\to \Der_0(E,G)/\Der_\inner(E,G),\\
		\cL_E^{\otimes -q}(S)\oplus \cL_E^{\otimes-q^2}(S)&\to \Der_0(E,G)/\Der_\si(E,G)
\end{aligned}
\end{equation}
and also an isomorphism of $\cO_S$-modules
\begin{equation}\label{EqnIdentifyDR}
\cL_E^{\otimes-q}\oplus \cL_E^{\otimes-q^2}\to \cH_{\dR}(E).
\end{equation}

Let $R\to B$ be a homomorphism of $A$-algebras and let $D\in \Der_R(B)$. 
When the invertible sheaf $\cL_E$ is trivial, write $\cL_E=\cO_S l$ with some nowhere vanishing section $l\in \cL_E(S)$. Then 
the derivation $D$ acts $R$-linearly on $\Der_0(E,G)$ by
\[
D(b_1 l^{\otimes -q}+\cdots + b_i l^{\otimes -q^i}):=D(b_1) l^{\otimes -q}+\cdots + D(b_i) l^{\otimes -q^i}\quad (b_i\in B).
\]
Since $D(b^q)=0$ for any $b\in B$, this is independent of the choice of a generator $l$. By gluing, we obtain an $R$-linear map
$\nabla_D: \Der_0(E,G)\to \Der_0(E,G)$. 
For $G=\Ga$, the map $\nabla_D$ preserves the $\bF_q$-subspace $\Der_\si(E,\Ga)$.
Thus we obtain an $R$-linear map
\[
\nabla_D: \DR(E,\Ga)\to \DR(E,\Ga).
\]

\subsection{Dual Drinfeld modules}\label{SubsecDualDM}

Let $B$ be an $A$-algebra and let $S=\Spec(B)$. 
Let $E=(\cL_E,\Phi^E)$ be a Drinfeld module of rank two over $B$. Write
\[
\Phi_t^E=\theta + \alpha_1\tau +\alpha_2\tau^2,\quad \alpha_i\in \cL_E^{\otimes 1-q^i}(S).
\]
Then Taguchi defined a dual $E^D$ of $E$, which is a Drinfeld module of rank two over $B$
\cite[p.~578]{Tag}. In fact, Papanikolas--Ramachandran \cite[p.~415]{PapRam} showed that the dual $E^D$ represents the functor
\[
(B\text{-algebras})\to (A\text{-modules}),\quad B'\mapsto \Der_0(E|_{B'},C|_{B'})/\Der_\inner(E|_{B'},C|_{B'}).
\]
Then (\ref{EqnIdentifyDer}) gives an isomorphism of $\bF_q$-vector space schemes over $B$
\begin{equation*}
\bV_*(\cL^{\otimes -q}_E)\to E^D.
\end{equation*}
Under this isomorphism, the $A$-action on $E^D$ is given by 
\begin{equation}\label{EqnTmodStrOnDualDM}
\Phi^{E^D}_t=\theta- \alpha_1\otimes \alpha_2^{\otimes -1}\tau+ \alpha_2^{\otimes -q}\tau^2.
\end{equation}

By \cite[Lemma 2.21]{Ha_HTT}, we have isomorphisms of $B$-modules
\begin{equation}\label{EqnHodgeMaps}
	\begin{aligned}
\Der_\inner(E,\Ga)/\Der_\si(E,\Ga)&\to \Lie(E)^\vee=\omega_E,\\ \Lie(E^D)&\to \Der_0(E,\Ga)/\Der_\inner(E,\Ga).
\end{aligned}
\end{equation}
Write $\Ga=\Spec(B[Z])$. Then the former is induced by composing the map
\[
\Der_\inner(E,\Ga)\to \Hom_{B}(\Lie(E),\Lie(\Ga)),\quad \delta_f\mapsto \Lie(f)
\]
and the isomorphism $dZ:\Lie(\Ga)\to B$. For the latter, consider the scheme of dual numbers $S_\vep=\Spec(B[\vep]/(\vep^2))$.
We have
\[
\Lie(E^D)=\Ker(E^D(S_\vep)\to E^D(S))
\]
and any element $\delta\in \Der_0(E|_{S_\vep},C|_{S_\vep})$ can be written as
$\delta=\delta^0+\vep \delta^1$ 
with $\delta^0\in \Der_0(E,C)$ and $\delta^1\in \Der_0(E,\Ga)$. Then the latter map of (\ref{EqnHodgeMaps}) is induced by $\delta\mapsto \delta^1$.

The maps of (\ref{EqnHodgeMaps}) yield an exact sequence of $B$-modules
\begin{equation}\label{EqnHodgeFil}
	\xymatrix{
		0 \ar[r] & \omega_E \ar[r]& \DR(E,\Ga) \ar[r]& \Lie(E^D) \ar[r]& 0,
	}
\end{equation}
which is split since the $B$-module $\Lie(E^D)$ is locally free.

For any homomorphism of $A$-algebras $R\to B$ and $D\in \Der_R(B)$, the maps of (\ref{EqnHodgeMaps}) give the $R$-linear map
\[
\pi_D: \omega_E\to \DR(E,\Ga)\overset{\nabla_D}{\to} \DR(E,\Ga)\to \Lie(E^D),
\]
which is in fact $B$-linear. Hence this yields a $B$-linear map
\[
\KS_{E/B/R}:\Der_R(B)\to \Hom_B(\omega_E,\Lie(E^D))=\omega_E^\vee\otimes_B \Lie(E^D)
\]
which is called the Kodaira--Spencer map for $E$ \cite[(6.6)]{Gek_dR}. 

Suppose that the $B$-module $\Omega^1_{B/R}$ is finite locally free. Then, by double duality, we have a natural isomorphism of $B$-modules
\[
\Omega^1_{B/R}\to \Hom_B(\Der_R(B),B).
\]
Hence, by taking the $B$-linear dual, the map $\KS_{E/B/R}$ defines a $B$-linear map
\begin{equation*}
\KS^\vee_{E/B/R}: \omega_E\otimes_B \omega_{E^D}\to \Omega_{B/R}^1.
\end{equation*}
Moreover, let $B'$ be a formally \'{e}tale $B$-algebra, so that the natural map $B'\otimes_B \Omega^1_{B/R}\to \Omega^1_{B'/R}$
is an isomorphism. This induces an isomorphism of $B'$-modules
\[
\Der_R(B')\to B'\otimes_B\Der_R(B),\quad D|_{B'}\mapsto 1\otimes D
\]
satisfying $(D|_{B'})(b)=D(b)|_{B'}$ for any $b\in B$.
Then we have the natural commutative diagram
\begin{equation}\label{EqnDiagKSBCB}
\xymatrix{
	\omega_E|_{B'}\otimes_{B'} \omega_{E^D}|_{B'}\ar[rr]^-{1\otimes\KS^\vee_{E/B/R}}\ar[d]_{\wr}& &B'\otimes_B\Omega_{B/R}^1\ar[d]^{\wr}\\
	\omega_{E|_{B'}}\otimes_{B'} \omega_{E|_{B'}^D}\ar[rr]_-{\KS^\vee_{E|_{B'}/B'/R}}& &\Omega_{B'/R}^1,
}
\end{equation}
where the vertical arrows are isomorphisms. 

Similarly, for any homomorphism $R\to R'$ of $A$-algebras and $B_{R'}:=R'\otimes_R B$, we have a natural isomorphism of $B_{R'}$-modules
$R'\otimes_R\Omega^1_{B/R}\to \Omega^1_{B_{R'}/R'}$. Then this induces the natural commutative diagram with isomorphic vertical arrows
\begin{equation}\label{EqnDiagKSBCR}
\xymatrix{
	\omega_E|_{B_{R'}}\otimes_{B_{R'}} \omega_{E^D}|_{B_{R'}}\ar[rr]^-{1\otimes\KS^\vee_{E/B/R}}\ar[d]_{\wr}& &B_{R'}\otimes_{B}\Omega_{B/R}^1
	&R'\otimes_R\Omega_{B/R}^1\ar[d]^{\wr}\ar[l]_-{\sim} \\
	\omega_{E|_{B_{R'}}}\otimes_{B_{R'}} \omega_{E|_{B_{R'}}^D}\ar[rrr]_-{\KS^\vee_{E|_{B_{R'}}/B_{R'}/R'}}& & &\Omega_{B_{R'}/R'}^1.
}
\end{equation}

\subsection{Case of trivial underlying invertible sheaf}\label{SubsecTrivCase}

Let us give an explicit description of the Hodge filtration (\ref{EqnHodgeFil}) when $\cL_E$ is trivial and $S=\Spec(B)$. 
Let $l\in\cL_E(S)$ be a nowhere vanishing section, so that we have $\cL_E=\cO_S l$. 
Then $\cL_E^{\otimes -1}=\cO_Sl^{\otimes -1}$ with the dual basis $l^{\otimes -1}$ of $l$.
We have an isomorphism of $\cO_S$-algebras
\[
\cO_S[X]\to \Sym_{\cO_S}(\cL_E^{\otimes -1}),\quad X\mapsto l^{\otimes -1}.
\]
Similarly, we also have an isomorphism of $\cO_S$-algebras
\[
\cO_S[Y]\to \Sym_{\cO_S}(\cL_E^{\otimes q}),\quad Y\mapsto l^{\otimes q}.
\]
They induce $\Ga$-equivariant isomorphisms of $\bF_q$-vector space schemes over $S$
\begin{equation}\label{EqnIdentificationXY}
E=\bV_*(\cL_E)\to \Spec(\cO_S[X]),\quad E^D=\bV_*(\cL_E^{\otimes -q})\to \Spec(\cO_S[Y]).
\end{equation}
Using these maps, we identify as $E=\Spec(\cO_S[X])$ and $E^D=\Spec(\cO_S[Y])$, respectively. Write
\[
\alpha_i=\mathbf{a}_i l^{\otimes 1-q^i}\quad (\mathbf{a}_i\in B).
\]
Then the isomorphisms (\ref{EqnIdentificationXY}) enable us to identify $\Phi^E_t$ and $\Phi^{E^D}_t$ as
\[
\Phi_t^E=\theta+ \mathbf{a}_1\tau+ \mathbf{a}_2\tau^2,\quad \Phi_t^{E^D}=\theta-\mathbf{a}_1\mathbf{a}_2^{-1}\tau+\mathbf{a}_2^{-q} \tau^2.
\]

For $G=\Ga$ or $C$, write $G=\Spec(\cO_S[Z])$. Under the isomorphism (\ref{EqnIdentfyHom}), the relative $q$-th power Frobenius map
\[
\tau=(Z\mapsto X^q)\in \Hom_{\bF_q,S}(E,G)\simeq \bigoplus_{m\geq 0}\cL_E^{\otimes -q^m}(S)
\]
corresponds to $l^{\otimes -q}$. 
Since (\ref{EqnIdentificationXY}) yields $dX=l^{\otimes -1}$ and $\frac{d}{dY}=l^{\otimes -q}$,
we may write
\begin{equation}\label{EqnIdentificationHodgeFil}
\omega_E=\cO_S dX,\quad \Lie(E^D)=\cO_S\frac{d}{dY},\quad \cH_\dR(E)=\cO_S\tau\oplus \cO_S\tau^2.
\end{equation}
Then the exact sequence (\ref{EqnHodgeFil}) equals
\[
\xymatrix{
0 \ar[r] & B dX \ar[r]^-{i} &  B\tau\oplus B\tau^2\ar[r]^-{\pi} & B\frac{d}{dY} \ar[r] &  0.
}
\]

By the description of (\ref{EqnHodgeMaps}), the maps $i$ and $\pi$ in this sequence are given as follows.
Consider the map
\[
\id: E\to \Ga=\Spec(B[Z]),\quad Z\mapsto X.
\] 
Then $i(dX)$ equals the class of $\delta_{\id}$, where $\delta_\id$ is the inner biderivation satisfying $(\delta_\id)_t=\id\circ \Phi_t^E-\Phi_t^{\Ga}\circ 
\id$.
Namely, we have
\begin{equation}\label{EqnEdX}
i(dX)=\mathbf{a}_1\tau+\mathbf{a}_2\tau^2.
\end{equation}

On the other hand, let $b_1,b_2\in B$ and $\delta=b_1\tau+b_2\tau^2$. Let $S_\vep=\Spec(B[\vep]/(\vep^2))$. Then the element
\[
\pi(\delta)\in \Lie(E^D)=\Ker(E^D(S_\vep)\to E^D(S))
\]
is given by the class of the biderivation $(t\mapsto \vep\delta)\in \Der_0(E|_{S_\vep},C|_{S_\vep})$ modulo $\Der_\inner(E|_{S_\vep},C|_{S_\vep})$. Since
\[
\vep \mathbf{a}_2^{-1}b_2(\theta+\mathbf{a}_1\tau+ \mathbf{a}_2\tau^2)-(\theta+\tau)(\vep \mathbf{a}_2^{-1}b_2)=\vep(\mathbf{a}_2^{-1}\mathbf{a}
_1b_2\tau+b_2\tau^2),
\]
the class agrees with that of $\vep(b_1-\mathbf{a}_2^{-1}\mathbf{a}_1b_2)\tau$. Since this corresponds to the element 
\[
\vep\mathbf{a}_2^{-1}(\mathbf{a}_2b_1-\mathbf{a}_1b_2) l^{\otimes -q}\in \bV_*(\cL_E^{\otimes -q})(S_\vep)=E^D(S_\vep),
\]
under the identification (\ref{EqnIdentificationXY}) we obtain
\begin{equation}\label{EqnELie}
\pi(\delta)=\mathbf{a}_2^{-1}(\mathbf{a}_2b_1-\mathbf{a}_1b_2)  \frac{d}{dY}.
\end{equation}

\section{Extension to the compactification}\label{SecExt}

Let $(\frn,\Delta)$ be an $h$-level pair. Let $R$ be an $A[1/\frn]$-algebra which is an excellent regular domain. 

\subsection{Compactification of Drinfeld modular curves}\label{SubsecCompactify}

First we recall the definition of the compactification $X^\Delta_1(\frn)_R$ of $Y_1^\Delta(\frn)_R$ and its 
description around cusps, following 
\cite[p.~85 and \S5]{Ha_DMC}.

Let $E_\univ^\Delta=(\cL^\Delta_\univ,\Phi^{E^\Delta_\univ})$ 
be the universal Drinfeld module over $Y_1^\Delta(\frn)_R$, which is the base extension of the universal Drinfeld module $E_\univ$ 
over $Y_1(\frn)_R$ by the natural projection $Y_1^\Delta(\frn)_R\to Y_1(\frn)_R$. Consider the morphism
\begin{equation*}
j_t: Y_1^\Delta(\frn)_R\to \bA^1_{R}=\Spec(R[j]),\quad j\mapsto j_t(E_\univ^\Delta).
\end{equation*}
We define the compactification $X_1^\Delta(\frn)_R$ of $Y_1^\Delta(\frn)_R$ by the normalization of $\bP^1_{R}$ in $Y_1^\Delta(\frn)_R$ along the 
morphism $j_t$. Then it is known \cite[p.~85]{Ha_DMC} that the morphism $X_1^\Delta(\frn)_R\to \Spec(R)$ is smooth of relative dimension one with geometrically 
connected fibers.
Moreover, the smoothness implies that the formation of $X_1^\Delta(\frn)_R$ is compatible with any base extension $R\to R'$ of excellent regular domains over $A[1/
\frn]$.

Define a scheme $\widehat{\Cusps}^\Delta_R$ by the cartesian diagram
\begin{equation}\label{EqnDiagDfnCusps}
\xymatrix{
	\widehat{\Cusps}^\Delta_R\ar[r]\ar[d] & X_1^\Delta(\frn)_R\ar[d]\\
	\Spec(R[[1/j]])\ar[r] & \bP^1_R.
}
\end{equation}
Since (\ref{EqnTDCoeff}) implies $j_t(\TD^\td(\Lambda)_{/R})\in x^{1-q}R[[x^{q-1}]]^\times$, we have an isomorphism 
\[
y^\td: S_{0,R}:=\Spec(R((x^{q-1})))\to \Spec(R((1/j))),\quad j\mapsto j_t(\TD^\td(\Lambda)_{/R}).
\]
Then we can show that $\widehat{\Cusps}^\Delta_R$ is isomorphic to the normalization of $\Spec(R[[x^{q-1}]])$ in a scheme $Y_1^\Delta(\frn)_{S_{0,R}}$ defined by 
the following cartesian diagram
\begin{equation*}
\xymatrix{
	Y_1^\Delta(\frn)_{S_{0,R}} \ar[rr]\ar[d]& & Y_1^\Delta(\frn)_{R}\ar[d]\\
	S_{0,R}\ar[r]_-{y^\td}^-{\sim} &\Spec(R((1/j)))\ar[r] & \bA^1_R.
}
\end{equation*}

For $\bullet\in \{\emptyset,\Delta\}$, write 
\[
\begin{aligned}
 [\Gamma_1^\bullet(\frn)]_{\TD^\td/R}&:=[\Gamma_1^\bullet(\frn)]_{\TD^\td(\Lambda)_{/R}/\Spec(R((x^{q-1})))},\\
 [\Gamma_1^\bullet(\frn)]_{\TD/R}&:=[\Gamma_1^\bullet(\frn)]_{\TD(\Lambda)_{/R}/\Spec(R((x)))}.
\end{aligned}
\]
By \cite[Lemma 3.4]{Ha_DMC}, we have
\begin{equation}\label{EqnAutTD}
\bF_q^\times=\Aut_{A,R((x^{q-1}))}(\TD^\td(\Lambda)_{/R}),
\end{equation}
where the symbol on the right-hand side denotes the group of $A$-linear automorphisms over $R((x^{q-1}))$.
This group acts
on $[\Gamma_1(\frn)]_{\TD^\td/R}$ and $[\Gamma_1^\Delta(\frn)]_{\TD^\td/R}$ 
by
\begin{align*}
 [c](\TD^\td(\Lambda)|_T,\lambda)&:=(\TD^\td(\Lambda)|_T,c\lambda),\\
 [c](\TD^\td(\Lambda)|_T,\lambda,\mu)&:=(\TD^\td(\Lambda)|_T,c\lambda,c\mu)
\end{align*}
for any scheme $T$ over $S$ and any $c\in \bF_q^\times$.

Moreover, for any $c\in \bF_q^\times$, let
\[
g_c: R((x))\to R((x)),\quad x\mapsto c^{-1}x.
\]
This defines an action of $\bF_q^\times$ on $\Spec(R((x)))$ over $R((x^{q-1}))$.
By \cite[Lemma 5.1]{Ha_DMC}, we have an $\bF_q^\times$-equivariant isomorphism of schemes over $R((x^{q-1}))$
\begin{equation}\label{EqnIsomDeltaDeprive}
[\Gamma_1(\frn)]_{\TD/R}=[\Gamma_1(\frn)]_{\TD^\td/R}\times_{R((x^{q-1}))}\Spec(R((x)))\to [\Gamma_1^\Delta(\frn)]_{\TD^\td/R},
\end{equation}
where $\bF_q^\times$ acts on the source diagonally.

The natural morphism 
\[
T^\td:[\Gamma_1^\Delta(\frn)]_{\TD^\td/R}\to Y_1^\Delta(\frn)_{R}
\] 
is $\bF_q^\times$-invariant, and it
induces
an isomorphism of schemes over $R((x^{q-1}))$
\[
[\Gamma_1^\Delta(\frn)]_{\TD^\td/R}/\bF_q^\times\to Y_1^\Delta(\frn)_{S_{0,R}}.
\]

Let $\cZ_R^\Delta$ be the normalization of $\Spec(R[[x^{q-1}]])$ in $[\Gamma_1^\Delta(\frn)]_{\TD^\td/R}$. Then the $\bF_q^\times$-action on $
[\Gamma_1^\Delta(\frn)]_{\TD^\td/R}$ extends to $\cZ_R^\Delta$. By \cite[Lemma 5.2]{Ha_DMC}, the action is free, and the isomorphism above induces a natural 
isomorphism over 
$R[[x^{q-1}]]$
\begin{equation}\label{EqnIsomCuspsQuotB}
\cZ_R^\Delta/\bF_q^\times\to \widehat{\Cusps}^\Delta_R.
\end{equation}
By composing the natural projection, we obtain a morphism
\[
\sT^\td: \cZ_R^\Delta\to\cZ^\Delta_R/\bF_q^\times\to \widehat{\Cusps}^\Delta_R\to X_1^\Delta(\frn)_R.
\]

We denote by $\Cusps_R^\Delta$ the closed subscheme of $\widehat{\Cusps}_R^\Delta$ defined by $1/j=0$ with the reduced structure. 
By \cite[Theorem 6.3]{Ha_DMC}, we know that $\Cusps_R^\Delta$ is a closed subscheme of $X_1^\Delta(\frn)_R$ which is finite \'{e}tale over $R$ and thus it
defines an effective Cartier divisor on $X_1^\Delta(\frn)_R$. 

\begin{lem}\label{LemBCCusps}
	Let $R\to R'$ be a homomorphism of excellent regular domains over $A[1/\frn]$. Then we have natural isomorphisms
	\[
	\widehat{\Cusps}_{R'}^\Delta\to \widehat{\Cusps}_R^\Delta\,\widehat{\times}_R\,\Spec(R'),\quad \Cusps_{R'}^\Delta\to\Cusps_R^\Delta \times_R\Spec(R'),
	\]
	where $\widehat{\Cusps}_R^\Delta\,\widehat{\times}_R\,\Spec(R')$ denotes the affine scheme defined by the $(1/j)$-adic completion of the affine ring of
	$\widehat{\Cusps}_R^\Delta\times_R\Spec(R')$. 
\end{lem}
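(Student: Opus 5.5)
The plan is to exploit the base change compatibility of $X_1^\Delta(\frn)_R$ recalled above: the formation of $X_1^\Delta(\frn)_R$ commutes with $R\to R'$, so $X_1^\Delta(\frn)_{R'}=X_1^\Delta(\frn)_R\times_R\Spec(R')$. This identification is compatible with the maps to $\bP^1$ given by $j_t$, because the universal Drinfeld module base changes to the universal one and $j_t$ is invariant under base change. Since $X_1^\Delta(\frn)_R\to\bP^1_R$ is finite (a normalization of an excellent scheme), the fiber product with $\Spec(R[[1/j]])$ is affine. Write $\widehat{\Cusps}_R^\Delta=\Spec(\cB_R)$, where $\cB_R=\cA_R\otimes_{R[1/j]}R[[1/j]]$ and $\cA_R$ is the affine ring of $X_1^\Delta(\frn)_R$ over the chart $\Spec(R[1/j])$ of $\bP^1_R$. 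Here $\cA_R$ is a finite $R[1/j]$-algebra.

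First I will show $\cA_{R'}=\cA_R\otimes_RR'$. This is the base change compatibility above, restricted to the open where $1/j$ is regular. Then, because $\cA_R$ is finite over $R[1/j]$, we have $\cB_R=\cA_R\otimes_{R[1/j]}R[[1/j]]$, which is the $(1/j)$-adic completion of $\cA_R$. Therefore
\[
\cB_{R'}=\cA_R\otimes_R R'\otimes_{R'[1/j]}R'[[1/j]]=(\cA_R\otimes_R R')^\wedge .
\]
The completion of $\cB_R\otimes_R R'$ is also $(\cA_R\otimes_RR')^\wedge$: the ideal $(1/j)$ of $\cB_R$ is generated by $1/j$, and $\cB_R/(1/j)^n=\cA_R/(1/j)^n$. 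Taking inverse limits of $\cA_R\otimes_RR'/(1/j)^n$ in both descriptions gives the first isomorphism. The map is the natural one, induced by the projection $X_1^\Delta(\frn)_{R'}\to X_1^\Delta(\frn)_R$.

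For the second isomorphism, reduce modulo $1/j$. The scheme $\Spec(\cB_R/(1/j))$ base changes to $\Spec(\cB_{R'}/(1/j))$ by the above. The issue is the reduced structure, and this is the main obstacle. By \cite[Theorem 6.3]{Ha_DMC}, $\Cusps_R^\Delta$ is finite étale over $R$, so $\Cusps_R^\Delta\times_R\Spec(R')$ is finite étale over the regular ring $R'$. Hence it is regular, and in particular reduced. It is a closed subscheme of $\Spec(\cB_{R'}/(1/j))$ with the same underlying topological space: the nilradical modulo $1/j$ has the same support, since topological spaces are compatible with base change of the closed immersion $\Cusps_R^\Delta\to\Spec(\cB_R/(1/j))$, which is a homeomorphism. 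Being reduced, it is therefore the reduced closed subscheme $\Cusps_{R'}^\Delta$.

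If the topological claim needs more care, I would instead use the Tate--Drinfeld description (\ref{EqnIsomCuspsQuotB}). There $\cZ_R^\Delta$ is a normalization of $R[[x^{q-1}]]$ in a finite étale cover, and it is explicitly a disjoint union of spectra of power series rings over finite étale $R$-algebras, with $\bF_q^\times$ acting freely. Reading off the cusps as the finite étale quotient gives a description that visibly commutes with base change.
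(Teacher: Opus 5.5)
Your argument is correct and takes essentially the paper's route. Base-change compatibility of $X_1^\Delta(\frn)$ and finiteness of $j_t$ handle $\widehat{\Cusps}$, and reducedness of the finite \'{e}tale scheme $\Cusps_R^\Delta\times_R\Spec(R')$ handles $\Cusps$; your topological claim holds because a surjective closed immersion remains one after base change, so the Tate--Drinfeld fallback is unnecessary.

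The one difference is the completion step. You check it by hand, using $\mathcal{B}_R/(1/j)^n=\mathcal{A}_R/(1/j)^n$ and finiteness over the Noetherian ring $R'[1/j]$. The paper instead invokes \cite[Exercise 0.7.9]{FK} for the finite algebra $\widehat{\Cusps}_R^\Delta\times_R\Spec(R')$ over $\Spec(R[[1/j]])\times_R\Spec(R')$, a base ring that need not be Noetherian, which your version avoids.
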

\begin{proof}
	This follows in the same way as \cite[Proposition 8.6.6]{KM}. Since $\widehat{\Cusps}_{R}^\Delta$ is finite over $\Spec(R[[1/j]])$, the former assertion 
	follows from \cite[Exercise 0.7.9]{FK} and the cartesian diagrams
	\[
	\xymatrix{
		\widehat{\Cusps}_{R'}^\Delta \ar[r]\ar[d]& \widehat{\Cusps}_{R}^\Delta\times_R \Spec(R')  \ar[r]\ar[d]& X_1^\Delta(\frn)_{R'}\ar[d]\\
		\Spec(R'[[1/j]])\ar[r] & \Spec(R[[1/j]])\times_R\Spec(R')\ar[r] & \bP^1_{R'}.
	}
	\]
	Since the closed subscheme $\Cusps_R^\Delta\times_R\Spec(R')$ of
	$X_1^\Delta(\frn)_{R'}\simeq X_1^\Delta(\frn)_{R}\times_R\Spec(R')$ is finite \'{e}tale over $R'$, it is reduced. Thus it agrees with
	$\Cusps_{R'}^\Delta$ as a subscheme and the latter assertion follows.
\end{proof}

\subsection{Modification at cusps and an extension of the de Rham sheaf}\label{SubsecModify}

Write 
\[
\cP^\Delta_R:=[\Gamma_1^\Delta(\frn)]_{\TD^\td/R}=\Spec(B^\Delta_R),\quad \cZ_R^\Delta=\Spec(\sB^\Delta_R),
\]
where $B^\Delta_R$ is a finite \'{e}tale $R((x^{q-1}))$-algebra and $\sB^\Delta_R$ is a finite torsion-free $R[[x^{q-1}]]$-subalgebra of $B^\Delta_R$.
These rings are equipped with natural actions of the group $\bF_q^\times$ via (\ref{EqnAutTD}) which are compatible with each other.
We denote the actions of $c\in \bF_q^\times$ on these rings by $[c]$.

Let $\cF$ be a finite locally free sheaf on $Y_1^\Delta(\frn)_R$. 
Let $M$ be the finite locally free $B^\Delta_R$-module associated with $(T^\td)^*\cF$. Then the $B^\Delta_R$-module $M$ is equipped with an $\bF_q^\times$-action 
covering 
the action on 
$B^\Delta_R$. For $c\in \bF_q^\times$, we denote the action by
\[
\theta_c: [c]^*M\to M.
\]

\begin{dfn}\label{DfnModification}
	Let $\cF$ be a finite locally free sheaf on $Y_1^\Delta(\frn)_R$. 
	Let $M$ be the finite locally free $B^\Delta_R$-module associated with $(T^\td)^*\cF$. A modification of $\cF$ at cusps is a finite 
	locally free $\sB^\Delta_R$-submodule 
	$\sM$ of $M$ 
	such that $\sM\otimes_{\sB_R^\Delta}B_R^\Delta=M$ and for any $c\in \bF_q^\times$, the map $\theta_c$ sends $[c]^*\sM$ to $\sM$. It follows that the induced 
	map
	$\theta_c:[c]^*\sM\to \sM$ is an isomorphism and defines an action of $\bF_q^\times$ covering that on $\sB^\Delta_R$.
\end{dfn}

Let $\cC_R$ be the category defined as follows.

\begin{itemize}
	\item An object of $\cC_R$ is a pair $(\cF,\sM)$ of a finite locally free sheaf $\cF$ on $Y_1^\Delta(\frn)_R$ and its modification at cusps $\sM$.
	\item A morphism $(\cF,\sM)\to (\cF',\sM')$ is a pair $(f,g)$ of a morphism of $\cO_{Y_1^\Delta(\frn)_R}$-modules $f:\cF\to \cF'$ and 
	a $\sB^\Delta_R$-linear map
	$g:\sM\to \sM'$ satisfying $((T^\td)^*f)|_{\sM}=g$. Then it follows that $g$ is $\bF_q^\times$-equivariant.
\end{itemize}
We say that a sequence $(\cF,\sM)\to (\cF',\sM')\to (\cF'',\sM'')$ in this category is exact if the sequences $\cF\to \cF'\to\cF''$ and $\sM\to \sM'\to \sM''$ are 
exact.

For any homomorphism $R\to R'$ of excellent regular domains over $A[1/\frn]$, we have a base extension functor $\cC_R\to \cC_{R'}$ defined by
\[
(\cF,\sM)\mapsto (\cF|_{Y_1^\Delta(\frn)_{R'}},\sM\otimes_{B_R^\Delta}B_{R'}^\Delta).
\]

\begin{lem}\label{LemExt}
	The functor from the category of finite locally free sheaves on $X_1^\Delta(\frn)_R$ to $\cC_R$ defined by
	\[
	\cF\mapsto (\cF|_{Y_1^\Delta(\frn)_R},(\sT^\td)^*\cF)
	\]
	is an exact equivalence of categories with an exact quasi-inverse. Moreover, this functor is compatible with the base extension along any homomorphism $R\to 
	R'$ of excellent regular 
	domains 
	over $A[1/\frn]$.
\end{lem}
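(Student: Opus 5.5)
The functor is a Beauville--Laszlo-type gluing, and I will prove the equivalence by descending twice: first through the free quotient by $\bF_q^\times$, then through the formal completion along the cusps.

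\emph{Step 1: removing the $\bF_q^\times$-action.} Put $\widehat{\Cusps}^\Delta_R=\Spec(\sB_0)$, so that $\sB_0=(\sB^\Delta_R)^{\bF_q^\times}$ and $B_0=(B^\Delta_R)^{\bF_q^\times}$ are the affine rings of $\widehat{\Cusps}^\Delta_R$ and of $Y_1^\Delta(\frn)_{S_{0,R}}$. By \cite[Lemma 5.2]{Ha_DMC} and (\ref{EqnIsomCuspsQuotB}), $\cZ^\Delta_R\to\widehat{\Cusps}^\Delta_R$ is a finite \'etale $\bF_q^\times$-torsor. Galois descent along this torsor therefore identifies finite locally free $\sB_0$-modules with finite locally free $\sB^\Delta_R$-modules carrying a compatible $\bF_q^\times$-action. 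The same holds for $B_0\to B^\Delta_R$, and this correspondence is compatible with the inclusions. Under it, a modification $\sM$ corresponds to a finite locally free $\sB_0$-lattice $\sM_0$ in $M_0:=$ the restriction of $\cF$ to $Y_1^\Delta(\frn)_{S_{0,R}}$, with $\sM_0\otimes_{\sB_0}B_0=M_0$. Taking invariants is exact, since $\#\bF_q^\times=q-1$ is invertible on $R$.

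\emph{Step 2: Beauville--Laszlo gluing.} Work over the affine open $U=X_1^\Delta(\frn)_R\setminus\{j=0\}$, the preimage of $\Spec(R[1/j])$. Its ring $C$ is finite over $R[1/j]$ because $R$ is excellent, and $1/j$ is a nonzerodivisor on $C$. We have $U\setminus\{1/j=0\}=U\cap Y_1^\Delta(\frn)_R$. Since $C$ is finite over $R[1/j]$, the $(1/j)$-adic completion of $C$ is $C\otimes_{R[1/j]}R[[1/j]]=\sB_0$ by the cartesian diagram (\ref{EqnDiagDfnCusps}). Moreover $C[j]\otimes_C\sB_0=B_0$, which uses $R((1/j))\simeq R((x^{q-1}))$ via $y^\td$. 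The theorem of Beauville--Laszlo \cite{BeauLasz} then gives an equivalence between finite locally free $C$-modules and triples consisting of a finite locally free $C[j]$-module, a finite locally free $\sB_0$-module, and an isomorphism over $B_0$. Gluing this with the trivial identification on $Y_1^\Delta(\frn)_R$ yields the equivalence with $\cC_R$, and full faithfulness comes along with it. Exactness of the functor is clear, because pullback of locally free sheaves is exact. For the quasi-inverse, I would check that kernels and cokernels compatible with exactness on both pieces glue correctly. This reduces to the fact that $C\to C[j]\times\sB_0$ is faithfully flat, so a sequence of $C$-modules is exact if and only if it is exact after this base change. Here $\sB_0$ is flat over $C$ because $C$ is noetherian.

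\emph{Step 3: base change.} Compatibility with $R\to R'$ follows from Lemma \ref{LemBCCusps}, namely $\widehat{\Cusps}_{R'}^\Delta\simeq$ the completed base change, together with $X_1^\Delta(\frn)_{R'}=X_1^\Delta(\frn)_R\times_R R'$ and the corresponding statements for $\cZ^\Delta$ and $\cP^\Delta$. The last two hold because $\cZ^\Delta$ is a torsor over the cusps. The main obstacle is the bookkeeping in Step 2: verifying precisely that the completion of $C$ is $\sB_0$, which needs finiteness and excellence, and that the torsor descent of Step 1 matches the lattice conditions. I would handle this by verifying everything on rings of invariants.
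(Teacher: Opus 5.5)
Your proposal is correct and follows the paper's proof essentially step for step. The paper also descends along the free $\bF_q^\times$-quotient $\cZ^\Delta_R\to\widehat{\Cusps}^\Delta_R$: it cites Mumford's theorem on quotients by free actions, which amounts to your Galois descent, with exactness of invariants coming from $q-1\in R^\times$. It then applies Beauville--Laszlo gluing on the preimage $\Spec(\bB)$ of $\Spec(R[1/j])$. For exactness of the quasi-inverse, the paper uses the snake lemma on a gluing exact sequence, whereas you use faithful flatness of $C\to C[j]\times\sB_0$; both routes are valid.

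One small correction concerns exactness of the functor itself. Exactness in $\cC_R$ means exactness in the middle of three-term sequences, and this is not preserved by pullback of locally free sheaves in general. You should justify it instead by flatness of $C\to\sB_0\to\sB^\Delta_R$ and of the open immersion of $Y_1^\Delta(\frn)_R$, which you already have.
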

\begin{proof}
	Let us prove the first assertion.
	By \cite[\S12, Theorem 1 (B)]{Mumford}, the category $\cC_R$ is equivalent to the category $\cC'_R$ 
	of pairs $(\cF,\cG)$ of a finite locally free sheaf $\cF$ on $Y_1^\Delta(\frn)_R$
	and a finite locally free sheaf $\cG$ on $\widehat{\Cusps}_R^\Delta$ which agree with each other on $Y_1^\Delta(\frn)_{S_{0,R}}$. 
	Since the equivalence is given by 
	the pull-back along the quotient map by a free action of $\bF_q^\times$, it is exact. 
	Moreover, since the order of the group $\bF_q^\times$ is invertible in $R$ and the quasi-inverse is given by taking the 
	$\bF_q^\times$-invariant submodule, the latter is also 
	exact. Thus we reduce ourselves to showing that the functor
	\begin{equation}\label{EqnLemExtFunctorDash}
	\cF\mapsto (\cF|_{Y_1^\Delta(\frn)_R},\cF|_{\widehat{\Cusps}_R^\Delta})
	\end{equation}
	from the category of finite locally free sheaves on $X_1^\Delta(\frn)_R$ to $\cC'_R$ is an exact equivalence of categories with an exact quasi-inverse. 
	
	Consider the finite map $j_t:X_1^\Delta(\frn)_R\to \bP^1_R$ and the open subscheme $\Spec(R[1/j])$ of $\bP^1_R$. Write the restriction of $X_1^\Delta(\frn)_R$ 
	to this 
	open subscheme by $\Spec(\bB)$. Since the morphism $j_t:Y_1^\Delta(\frn)_R\to\Spec(R[j])$ is flat, the element $1/j\in \bB$ is a nonzero divisor. 
	Furthermore, the finiteness of $j_t$ implies that the affine ring of $\widehat{\Cusps}^\Delta_R$ is identified with 
	the $(1/j)$-adic completion $\hat{\bB}$ of $\bB$ and that of 
	$Y_1^\Delta(\frn)_{S_{0,R}}$ with $\hat{\bB}[1/(1/j)]$. Thus the descent lemma of Beauville--Laszlo \cite[Th\'{e}or\`{e}me]{BeauLasz} shows that the functor 
	(\ref{EqnLemExtFunctorDash}) is 
	an equivalence of categories.
	
	For the exactness, since we are assuming that $R$ is excellent (in particular, Noetherian), the map $\bB\to \hat{\bB}$ is flat and the functor is exact. The 
	exactness of the quasi-inverse follows from \cite[(15.91.16.1)]{Stacks} and the snake lemma. The compatibility with base extension follows from the commutative 
	diagram
	\[
	\xymatrix{
		\cZ_{R'}^\Delta \ar[r]\ar[d] & X_1^\Delta(\frn)_{R'}\ar[d]& Y_1^\Delta(\frn)_{R'}\ar[l]\ar[d]\\
			\cZ_{R}^\Delta \ar[r] & X_1^\Delta(\frn)_{R} & Y_1^\Delta(\frn)_{R}.\ar[l]
	}
	\]
\end{proof}

Define the Hodge bundle on $Y_1^\Delta(\frn)_R$ by
\[
\omega^\Delta_\univ:=\Cot(E^\Delta_\univ)=(\cL^\Delta_\univ)^\vee.
\]
Its extension to $X_1^\Delta(\frn)_R$ is defined in \cite[Theorem 5.3 (2)]{Ha_DMC}. Here we briefly recall the construction, using Lemma \ref{LemExt}.
Over the scheme $\cP_R^\Delta=\Spec(B_R^\Delta)$, we have the canonical isomorphism
\begin{equation}\label{EqnEunivPullBackP}
	\TD^\td(\Lambda)|_{B_R^\Delta}\to E_\univ^\Delta|_{\cP_R^\Delta}.
\end{equation}
Moreover, under this isomorphism, the pull-back of the Hodge bundle to $\cP_R^\Delta$
agrees with the coherent sheaf associated with
\[
\Cot(\TD^\td(\Lambda)|_{B_R^\Delta})=B_R^\Delta dX, 
\]
and the action of 
$c\in 
\bF_q^\times$ via (\ref{EqnAutTD}) is given by $dX\mapsto cdX$ covering the action on $B^\Delta_R$. 
Hence its $\sB_R^\Delta$-submodule $\sB_R^\Delta dX$ is a modification of $\omega^\Delta_\univ$ at cusps, and thus Lemma \ref{LemExt} yields
an invertible sheaf $\bar{\omega}^\Delta_{\univ}$ on $X_1^\Delta(\frn)_R$ extending $\omega^\Delta_\univ$.


Next we define an extension of the de Rham sheaf
\[
\cH_{\dR,\univ}^\Delta:=\cH_\dR(E^\Delta_\univ)
\] on $Y_1^\Delta(\frn)_R$ to $X_1^\Delta(\frn)_R$. 
A similar extension is given by Gezm\.{i}\c{s}--Venkata \cite[Definition 7.5]{GV} in the case of full level.

Consider the exact sequence (\ref{EqnHodgeFil}) for the Tate-Drinfeld module $\TD^\td(\Lambda)|_{B^\Delta_R}$ over $\cP_R^\Delta=\Spec(B^\Delta_R)$. 
As in (\ref{EqnIdentificationHodgeFil}), we write
\[
M:=\DR(\TD^\td(\Lambda)|_{B^\Delta_R},\Ga)=B^\Delta_R\tau\oplus B^\Delta_R\tau^2.
\]
By (\ref{EqnEdX}), we have 
	\begin{equation}\label{EqnDfnDX}
	dX=a_1\tau+a_2\tau^2\in B^\Delta_R\tau\oplus B^\Delta_R\tau^2.
\end{equation}

On the other hand, by \cite[Lemma 4.1]{Ha_HTT}, we have
\[
	l(x):=\frac{da_1}{dx}-\frac{a_1}{a_2}\frac{da_2}{dx}\in \frac{1}{x}R[[x]]^\times.
\]
Following \cite[p.~247]{Gek_dR}, consider the element of $\DR(\TD(\Lambda)_{/R},\Ga)$ defined by
\[
\left(\nabla_{-x^2\frac{d}{dx}}+\frac{x^2}{a_2}\frac{da_2}{dx}\right)(dX),
\]
which is identified with 
\begin{equation}\label{EqnDfnEta}
\eta:=-x^2l(x)\tau\in R((x))\tau\oplus R((x))\tau^2.
\end{equation}
By the isomorphism (\ref{EqnIsomDeltaDeprive}), the natural map $R((x^{q-1}))\to B^\Delta_R$ factors through $R((x))$ and the Tate--Drinfeld module $
\TD^\td(\Lambda)|_{B^\Delta_R}$
agrees with the base extension of $\TD(\Lambda)_{/R}$ to $B^\Delta_R$. We denote the pull-back of $\eta$ via this map also by $\eta$.

Since $\sB_R^\Delta$ is defined as the normalization of $R[[x^{q-1}]]$ in $B_R^\Delta$, (\ref{EqnIsomDeltaDeprive}) also implies that the map $R[[x^{q-1}]]\to 
\sB_R^\Delta$ factors through $R[[x]]$.
Hence
\begin{equation}\label{EqnDfnLx}
l(x)\in \frac{1}{x}R[[x]]^\times\subseteq \frac{1}{x}(\sB_R^\Delta)^\times.
\end{equation}
This shows that the elements $dX$ and $\eta$ form a basis of the free $B^\Delta_R$-module $M=\DR(\TD^\td(\Lambda)|_{B^\Delta_R},
\Ga)$.

Now we define a $\sB^\Delta_R$-submodule $\sM$ of $M$ by 
\[
\sM:=\sB^\Delta_R dX\oplus \sB^\Delta_R \eta.
\]
For any $c\in \bF_q^\times$, consider the map $\theta_c:[c]^*M\to M$.
Since $a_i\in R[[x^{q-1}]]$ for $i=1,2$, we see that the element $x\frac{da_i}{dx}\in R((x))$ is $\bF_q^\times$-invariant.
Moreover, the element $\tau\in \Hom_{\bF_q,R((x))}(\TD(\Lambda)_{/R},\Ga)$ is given by $Z\mapsto X^q$ and, as an automorphism of $\TD(\Lambda)_{/R}$
over $R((x))$, the action of $c$ sends $\tau$ to $c\tau$.
Since (\ref{EqnIsomDeltaDeprive}) shows that the map $R((x))\to B^\Delta_R$ is $\bF_q^\times$-equivariant, we have
\[
\theta_c([c]^*dX)=cdX,\quad \theta_c([c]^*\eta)=\eta.
\]
This implies that $\sM$ is a modification of $\cH_{\dR,\univ}^\Delta$ at cusps. By Lemma \ref{LemExt}, we obtain a finite 
locally free sheaf $\bar{\cH}_{\dR,\univ}^\Delta$ 
on
$X_1^\Delta(\frn)_R$ extending $\cH_{\dR,\univ}^\Delta$.

\section{Autoduality and the Hodge bundle}\label{SecAD}

Let $(\frn,\Delta)$ be an $h$-level pair and let $R$ be an $A[1/\frn]$-algebra which is an excellent regular domain. 


\subsection{$F$-finiteness and formal \'{e}taleness around cusps}\label{SubsecFfinite}

Recall that an $\bF_p$-algebra $\cR$ is said to be $F$-finite if the $p$-th power Frobenius map $F:\cR\to \cR$ is finite. When $\cR$ is 
an $\bF_q$-algebra, it is the same as saying that the $q$-th power Frobenius map $\sigma:\cR\to \cR$ is finite. 
Examples of $F$-finite $\bF_p$-algebras include $A$, perfect rings and complete discrete valuation rings whose residue field has a finite $p$-basis. If $\cR$ is 
$F$-finite,
then any $\cR$-algebra of finite type and the localization of $\cR$ by any multiplicative subset are $F$-finite.

The following lemma can be proved as \cite[Lemma 110.44.4]{Stacks}.

\begin{lem}\label{LemFormallyEtale}
	Let $\cR$ be an $F$-finite excellent $\bF_p$-algebra and let $z$ be an indeterminate. Then the natural map
	\[
	\iota:\cR[z]\to \cR[[z]]
	\] 
	is formally \'{e}tale. In particular, we have an isomorphism of $\cR[[z]]$-modules
	\[
	\cR[[z]]dz=\cR[[z]]\otimes_{\cR[z]}\Omega^1_{\cR[z]/\cR}\to \Omega^1_{\cR[[z]]/\cR}.
	\]
	\end{lem}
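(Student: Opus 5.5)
Following the proof of \cite[Lemma 110.44.4]{Stacks}, I will verify the lifting property for formal \'{e}taleness of $\iota$ directly, and then derive the statement on differentials. Since $\cR[[z]]$ is the $z$-adic completion of $\cR[z]$, it suffices to show that the relative module $\Omega^1_{\cR[[z]]/\cR[z]}$ vanishes and that $\iota$ is formally smooth (equivalently, that the naive cotangent complex of $\iota$ is acyclic).

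First, consider the differentials. For any $f\in \cR[[z]]$ and any $n\ge 1$, write $f=f_n+z^{p^n}g$ with $f_n\in \cR[z]$. This alone is not enough, because $d(z^{p^n}g)=z^{p^n}dg$ is not zero. Here $F$-finiteness enters. Since $\cR$ is $F$-finite, $\cR[[z]]$ is finite over its subring $\cR^p[[z^p]]$; more precisely, $\cR[[z]]$ is generated as a module over $\cR[[z]]^p=\cR^p[[z^p]]$ by the finitely many elements $r_iz^j$, where $r_i$ generate $\cR$ over $\cR^p$ and $0\le j<p$. Hence every $f\in\cR[[z]]$ has the form $\sum_{i,j} h_{ij}^p r_i z^j$, and $df=\sum h_{ij}^p\,d(r_iz^j)$ in $\Omega^1_{\cR[[z]]/\cR[z]}$. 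Each $r_iz^j$ lies in $\cR[z]$, so $df=0$ and $\Omega^1_{\cR[[z]]/\cR[z]}=0$.

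Next, formal smoothness. Since $\cR$ is excellent, $\cR[z]$ is excellent, so the completion map $\cR[z]\to\cR[[z]]$ is flat with geometrically regular formal fibres (it is a regular morphism). By Popescu's theorem, $\iota$ is then a filtered colimit of smooth $\cR[z]$-algebras. It follows that $L_{\cR[[z]]/\cR[z]}$ is concentrated in degree $0$ and equals $\Omega^1_{\cR[[z]]/\cR[z]}=0$. Formal smoothness together with vanishing differentials gives formal \'{e}taleness. If one prefers to avoid Popescu, one can instead argue as in the Stacks project: writing $\cR[[z]]=\bigotimes$ over $\cR^p[[z^p]]$ of the finite free pieces reduces the lifting problem to the absolute Frobenius, where it can be solved by hand. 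I expect this step to be the main obstacle: the care lies in checking the hypotheses (excellence for regularity and $F$-finiteness for finiteness of Frobenius) and in avoiding circularity.

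Finally, formal \'{e}taleness gives the isomorphism $\cR[[z]]\otimes_{\cR[z]}\Omega^1_{\cR[z]/\cR}\to\Omega^1_{\cR[[z]]/\cR}$. This follows from the first fundamental exact sequence
\[
\cR[[z]]\otimes_{\cR[z]}\Omega^1_{\cR[z]/\cR}\to\Omega^1_{\cR[[z]]/\cR}\to\Omega^1_{\cR[[z]]/\cR[z]}\to 0.
\]
Injectivity on the left holds because formal smoothness makes the sequence left exact and split. The source is $\cR[[z]]dz$.
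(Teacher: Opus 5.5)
Your argument is correct, and its skeleton is the paper's. You reduce to formal unramifiedness plus formal smoothness, and you obtain the latter exactly as the paper does: excellence of $\cR[z]$, regularity of the completion map, Popescu's theorem, and compatibility of the naive cotangent complex with filtered colimits.

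The genuine difference is in proving $\Omega^1_{\cR[[z]]/\cR[z]}=0$. The paper takes an arbitrary generating set $f_1,\ldots,f_r$ of $\cR[[z]]$ over $F(\cR[[z]])$. It uses this only to see that $\Omega^1_{\cR[[z]]/\cR[z]}$ is a finite module. It then makes work precisely the approximation $f=P_n+z^nf_n$ that you discard: one gets $df\in\bigcap_n z^n\Omega^1_{\cR[[z]]/\cR[z]}$, which vanishes by Krull's intersection theorem.

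You instead pick generators $r_iz^j$ that already lie in $\cR[z]$. This is legitimate: writing $f=\sum_n c_nz^n$ and $c_n=\sum_i s_{n,i}^p r_i$, one has $f=\sum_{i,j} r_iz^j\bigl(\sum_{m\geq 0}s_{pm+j,i}z^m\bigr)^p$, so $df=0$ immediately. Your version is shorter and uses neither Noetherianity nor Krull at this step. The paper's version needs only the abstract finiteness of Frobenius on $\cR[[z]]$, with no control over where the generators lie.

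You also spell out how the isomorphism on differentials follows from the split first fundamental exact sequence, which the paper leaves implicit.

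Your parenthetical alternative to Popescu is too vague to check as written. A precise version would be that the relative Frobenius $\cR[[z]]\otimes_{\cR[z],F}\cR[z]\to\cR[[z]]$, $g\otimes h\mapsto g^ph$, is an isomorphism, after which lifts can be written down using $p$-th powers. Nothing in your proof relies on it.
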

	\begin{proof}
		It is enough to show that $\iota$ is formally unramified and formally smooth. For the former, note that
		the image $F(\cR)$ of the map $F:\cR\to \cR$ is a subring of $
		\cR$. By the assumption that $\cR$ is $F$-finite, \cite[Lemma 10.96.12]{Stacks} implies that $\cR[[z]]$ is also $F$-finite. Let $f_1,\ldots, f_r$ be 
		generators of the $F(\cR[[z]])$-module $\cR[[z]]$. Write any element $f\in \cR[[z]]$ as $f=\sum_{i=1}^r h_i^p f_i$ with some $h_i\in \cR[[z]]$.
		Then it follows that 
		\[
		df=\sum_{i=1}^r h_i^p df_i,
		\]
		which implies that $\Omega^1_{\cR[[z]]/\cR[z]}$ is a finite $\cR[[z]]$-module. On the other hand, for any integer $n\geq 0$, write
		\[
		f=P_n + z^n f_n\quad (P_n\in \cR[z],\ f_n\in \cR[[z]]).
		\]
		Then we have $df=z^n df_n$ in $\Omega^1_{\cR[[z]]/\cR[z]}$ for any $n$. Namely,
		\[
		df\in \bigcap_{n\geq 0}z^n \Omega^1_{\cR[[z]]/\cR[z]}.
		\]
		Since $\cR[[z]]$ is Noetherian, Krull's intersection theorem \cite[Lemma 10.51.5]{Stacks} implies $df=0$. Since $f$ is arbitrary, this means $
		\Omega^1_{\cR[[z]]/\cR[z]}=0$ and
		thus the map $\iota$ is formally unramified.
		
		Let us show the formal smoothness. Since $\cR$ is excellent, the ring $\cR[z]$ is also excellent and by \cite[Lemma 15.50.14]{Stacks}, the map $\iota$ is
		regular. By Popescu's theorem \cite[Theorem 16.12.1]{Stacks}, it is a filtered colimit of smooth maps.
		Since the formation of na\"{i}ve cotangent complexes is compatible with filtered colimit \cite[Lemma 10.134.9]{Stacks}, 
		the formal smoothness of $\iota$ follows from
		\cite[Proposition 10.138.8]{Stacks} and $\Omega^1_{\cR[[z]]/\cR[z]}=0$.
	\end{proof}
	
	\begin{cor}\label{CorBYFormallyEtale}
		Let $R$ be an $A[1/\frn]$-algebra which is an $F$-finite excellent regular domain. Then the natural morphisms
		\[
		\begin{gathered}
			\iota: \widehat{\Cusps}_R^\Delta\to X_1^\Delta(\frn)_R,\\
		\sT^\td:\cZ^\Delta_R\to X_1^\Delta(\frn)_R,\quad
		T^\td:[\Gamma_1^\Delta(\frn)]_{\TD^\td/R}\to 
		Y_1^\Delta(\frn)_R
		\end{gathered}
		\] 
		are formally \'{e}tale. In particular, we have isomorphisms
		\[
		\begin{gathered}
			\iota^*\Omega^1_{X_1^\Delta(\frn)_R/R}\to \Omega^1_{ \widehat{\Cusps}_R^\Delta/R},\\
			(\sT^\td)^*\Omega^1_{X_1^\Delta(\frn)_R/R}\to \Omega^1_{\sB^\Delta_R/R},\quad 
		(T^\td)^*\Omega^1_{Y_1^\Delta(\frn)_R/R}\to \Omega^1_{B^\Delta_R/R}.
		\end{gathered}
		\]
	\end{cor}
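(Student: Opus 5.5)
The plan is to reduce everything to Lemma \ref{LemFormallyEtale} applied to suitable $F$-finite excellent rings, together with the fact that finite étale maps and localizations are formally étale. Formal étaleness is stable under composition and base change. The isomorphisms on differentials then follow from the standard fact that for a formally étale map $C\to D$ over $R$, the sequence $D\otimes_C\Omega^1_{C/R}\to\Omega^1_{D/R}\to\Omega^1_{D/C}\to 0$ has $\Omega^1_{D/C}=0$ and the first map is injective (indeed split) by formal smoothness.

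First, $\iota$. Work over the affine open $\Spec(\bB)=j_t^{-1}(\Spec(R[1/j]))$ of $X_1^\Delta(\frn)_R$, as in the proof of Lemma \ref{LemExt}. Here $\widehat{\Cusps}_R^\Delta=\Spec(\hat{\bB})$, and since $\bB$ is finite over $R[1/j]$,
\[
\hat{\bB}=\bB\otimes_{R[1/j]}R[[1/j]].
\]
Since $R$ is $F$-finite and excellent (being an $A[1/\frn]$-algebra that is an $F$-finite excellent domain), Lemma \ref{LemFormallyEtale} with $z=1/j$ shows that $R[1/j]\to R[[1/j]]$ is formally étale. Base change gives that $\bB\to\hat{\bB}$ is formally étale, and composing with the open immersion gives the claim for $\iota$.

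Next, $\sT^\td$ factors as
\[
\cZ_R^\Delta\to\cZ_R^\Delta/\bF_q^\times\simeq\widehat{\Cusps}_R^\Delta\to X_1^\Delta(\frn)_R,
\]
using (\ref{EqnIsomCuspsQuotB}). The first map is the quotient by a free action of a finite group of order invertible in $R$ (\cite[Lemma 5.2]{Ha_DMC}), hence finite étale, so $\sT^\td$ is formally étale. For $T^\td$, factor
\[
\cP_R^\Delta\to \cP_R^\Delta/\bF_q^\times\simeq Y_1^\Delta(\frn)_{S_{0,R}}\to Y_1^\Delta(\frn)_R.
\]
The first map is finite étale because the action on $\cZ_R^\Delta$, and hence on its open part, is free. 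The second map is the base change of $\Spec(R((1/j)))\to\bA^1_R$, using the isomorphism $y^\td$. That map factors as $R[j]\to R[j,1/j]=R[1/j][j]\to R[[1/j]][1/(1/j)]$, a localization followed by a localization of the formally étale map of Lemma \ref{LemFormallyEtale}, so it is formally étale.

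Alternatively, $T^\td$ is the restriction of $\sT^\td$ over the locus $1/j\neq0$, so formal étaleness of $T^\td$ also follows from that of $\sT^\td$ after localizing. I expect the only delicate point to be the identification $\hat{\bB}=\bB\otimes_{R[1/j]}R[[1/j]]$ (finiteness plus Noetherianity), which Lemma \ref{LemExt}'s proof already uses. The main thing to verify carefully is that formal smoothness yields injectivity of $D\otimes_C\Omega^1_{C/R}\to\Omega^1_{D/R}$ without any finiteness hypothesis. This holds by \cite[Lemma 10.138.9]{Stacks}-type results (the conormal/Jacobi–Zariski sequence), and Lemma \ref{LemFormallyEtale} itself records this form of the conclusion for the key map.
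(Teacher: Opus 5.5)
Your proposal is correct and follows essentially the same route as the paper. Both arguments obtain formal \'{e}taleness of $R[1/j]\to R[[1/j]]$ from Lemma \ref{LemFormallyEtale}, base change it along the cartesian diagram (\ref{EqnDiagDfnCusps}) to handle $\iota$, compose with the \'{e}tale free quotient $\cZ_R^\Delta\to\cZ_R^\Delta/\bF_q^\times$ to handle $\sT^\td$, and invert $1/j$ to handle $T^\td$; your ``alternative'' argument for $T^\td$ is exactly the paper's, and the isomorphisms on differentials follow, as you say, from the split exact sequence for formally smooth maps.
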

	\begin{proof}
		By Lemma \ref{LemFormallyEtale}, the map $R[1/j]\to R[[1/j]]$
		is formally \'{e}tale. Then the assertions on $\iota$ follows from the cartesian diagram (\ref{EqnDiagDfnCusps}) and \cite[Lemma 37.11.11]{Stacks}.
		Since the action of $\bF_q^\times$ on $\cZ_R^\Delta$ is free, the natural projection
		$\cZ_R^\Delta\to \cZ_R^\Delta/\bF_q^\times$
		is \'{e}tale. Hence the assertions on $\sT^\td$ follow from (\ref{EqnIsomCuspsQuotB}).
		Those on $T^\td$ follow by inverting $1/j$.
	\end{proof}

	\begin{rmk}\label{RmkFixHTT}
		Let $R_0$ be an $A[1/\frn]$-algebra which is an excellent regular domain. 
	As in \cite[\S6, p.~94]{Ha_DMC}, let $R_\frn$ be the domain generated over $R_0$ by a root of the $\frn$-th Carlitz 
		cyclotomic polynomial, which is a finite \'{e}tale $R_0$-algebra.
		By \cite[Lemma 6.2]{Ha_DMC}, the base change of $[\Gamma_1^\Delta(\frn)]_{\TD^\td/R_0}$ along the finite \'{e}tale map $R_0\to R_\frn$ is 
		a finite direct sum 
		of schemes of the form $\Spec(R_\frn((w)))$ with some indeterminate $w$.

		The proof of \cite[Corollary 4.2]{Ha_HTT} seems to have gaps. Firstly, in order that we have a natural map $Y(\frn)_R\to Y_1^\Delta(\frn)_R$,
		we need to assume that $C[\frn]|_R$ is isomorphic to $\underline{A/(\frn)}$. Thus, at the 
		beginning of the proof, 
		it is necessary to extend the base ring from $R_0$ to $R_\frn$. (The author thanks Paola Chilla for pointing out this issue.)
		
		Secondly,
		it is claimed in the last paragraph of \cite[p.~22]{Ha_HTT} that the pull-back of the map \cite[(4.4)]
		{Ha_HTT} to $R_\frn((w))$ is identified with a similar map for a Tate--Drinfeld module over $R_\frn((w))$. However, the target of the
		pull-back map is
		\[
		(R_\frn((w))\otimes_{B^\Delta_{R_\frn}} (T^\td)^*\Omega^1_{Y_1^\Delta(\frn)_{R_\frn}/R_\frn})^{\otimes q-1}
		\]
		and it is not necessarily isomorphic to $(\Omega^1_{R_\frn((w))/R_\frn})^{\otimes q-1}$. 
		
		When $R_0$ is $F$-finite, 
		the \'{e}taleness of the map $B^\Delta_{R_\frn}\to R_\frn((w))$ and Lemma \ref{LemFormallyEtale} show
		\[
		R_\frn((w))\otimes_{B^\Delta_{R\frn}} (T^\td)^*\Omega^1_{Y_1^\Delta(\frn)_{R_\frn}/R_\frn}\simeq R_\frn((w))\otimes_{B^\Delta_{R_\frn}} 
		\Omega^1_{B^\Delta_{R_\frn}/R_\frn}\simeq \Omega^1_{R_\frn((w))/
		R_\frn}
		\]
		and the issue is resolved. 
		
		By the compatibility with the base extension of Lemma \ref{LemExt}, the formation of the Hodge bundle $\bar{\omega}_\univ^\Delta$ is compatible 
		with any base change of excellent regular domains over $A[1/\frn]$ (see also \cite[Theorem 5.3]{Ha_DMC}).
		Since the ring $A[1/\frn]$ and finite $A[1/\frn]$-algebras are $F$-finite,
		the proof of \cite[Corollary 4.2]{Ha_HTT} 
		is valid with the following fix: we show it over $A[1/\frn]$ (by adding a root of the $\frn$-th Carlitz 
		cyclotomic polynomial for the first issue) and then extend the base ring to $R_0$.
		
		
	\end{rmk}
	
	\begin{rmk}\label{RmkFixDMC}
		The proof of \cite[Theorem 6.3 (3)]{Ha_DMC} needs a similar fix to Remark \ref{RmkFixHTT}.
		There it is allegedly proved that the pull-back of the sheaf $
		\Omega^1_{X_1^\Delta(\frn)_{R_0}/R_0}(2\Cusps_{R_0}^\Delta)$ by the map $\iota:\widehat{\Cusps}^\Delta_{R_0}\to 
		X_1^\Delta(\frn)_{R_0}$ is a free $\cO_{\widehat{\Cusps}^\Delta_{R_0}}$-module generated by $dx/x^2$. However, in the proof it is implicitly assumed that 
		\begin{itemize}
			\item $\iota^*\Omega^1_{X_1^\Delta(\frn)_{R_0}/R_0}\simeq \Omega^1_{\widehat{\Cusps}^\Delta_{R_0}/R_0}$, and
			\item $\Omega^1_{R_\frn[[w]]/R_\frn}=R_\frn[[w]]dw.$
		\end{itemize}
		Thus, in order to show \cite[Theorem 6.3 (3)]{Ha_DMC}, we need to assume that $R_0$ is $F$-finite, in which case the proof in 
		\cite{Ha_DMC} is valid since both of the assumptions above hold by Lemma \ref{LemFormallyEtale} and Corollary \ref{CorBYFormallyEtale}.

		This correction does not affect \cite{Ha_HTT}, since \cite[Theorem 6.3 (3)]{Ha_DMC} is used in that paper only when the base ring $R_0$ is $F$-finite.
		On the other hand, it does not seem that the assumption of $F$-finiteness of $R_0$ can be dropped by a base extension argument, since the formation of $
		\Omega^1_{\widehat{\Cusps}
		^\Delta_{R_0}/R_0}$ is not necessarily compatible with any base extension.

	\end{rmk}

\subsection{Autoduality of Drinfeld modules}\label{SubsecAD}

Let $h(z)$ be Gekeler's $h$-function \cite[Theorem 5.13]{Gek_Coeff}. It is a Drinfeld modular form of level $\mathit{GL}_2(A)$, weight $q+1$ and type one.
It is known that for the discriminant function $g_2(z)$ (which is denoted by $\Delta(z)$ in \cite{Gek_Coeff}), we have
	\begin{equation}\label{EqnH-functionRootOfDisc}
	h(z)^{q-1}=-g_2(z).
\end{equation} 
Moreover, \cite[Theorem 6.1]{Gek_Coeff} shows that, for a Carlitz period $
\bar{\pi}$ and the parameter at the infinity 
\[
u(z):=\frac{1}{\bar{\pi}z\prod_{a\in A\setminus\{0\}}\left(1-\frac{z}{a}\right)},
\]
the $h$-function has a $u$-expansion at the infinity of the form
\[
h(z)=u(z)+\sum_{n\geq 2} a_n u(z)^n,\quad a_n\in A.
\]

Note that the $\Cinf$-vector space of Drinfeld modular forms of level $\Gamma_1(\frn)$ and weight $k$ is identified with
\[
H^0(X_1^\Delta(\frn)_{\Cinf},(\bar{\omega}^\Delta_{\univ})^{\otimes k})
\]
(see \cite[p.~26]{Ha_HTT}).
The discriminant function $g_2(z)$ corresponds to an element of this space defined by the rule associating with a triple $(E,\lambda,\mu)$ 
consisting of a Drinfeld module $E$ of 
rank two over a scheme $S$ and a
$\Gamma_1^\Delta(\frn)$-structure $(\lambda,\mu)$ on it the section $\alpha_2^E(t)\in \omega_E^{q^2-1}(S)$.

Moreover, for any element $f$ of this space, let $f_\infty(u)$ be the $u$-expansion at the infinity of the corresponding Drinfeld modular form.
For the  $\infty$-cusp $x_\infty^\Delta:\Spec(\Cinf[[x]])\to X_1^\Delta(\frn)_{\Cinf}$ \cite[p.~93]{Ha_DMC}, write 
\[
F_\infty(x) (dX)^{\otimes k}=(x_{\infty}^\Delta)^*f
\]
with some $F_\infty(x)\in \Cinf[[x]]$.
Then we have $f_\infty(u)=F_\infty(u)$.
Therefore, by the $x$-expansion principle \cite[Proposition 4.8 (i)]{Ha_HTT} and (\ref{EqnH-functionRootOfDisc}), the $h$-function defines a section 
\[
\mathfrak{h}\in H^0(X_1^\Delta(\frn)_{A[1/\frn]},(\bar{\omega}^\Delta_{\univ})^{\otimes q+1})
\]
satisfying
\begin{equation}\label{EqnHsectionRelation}
	\mathfrak{h}^{\otimes q-1}=-\alpha_2^{E^\Delta_\univ}(t).
\end{equation}
In particular, $\mathfrak{h}$ is nowhere vanishing on $Y_1^\Delta(\frn)_{A[1/\frn]}$.

As mentioned in \cite[Remark 2.20]{Ha_HTT}, this gives an autoduality of Drinfeld modules of rank two admitting a $\Gamma_1^\Delta(\frn)$-structure. 

\begin{thm}\label{ThmAutodual}
	Let $S$ be a scheme over $A[1/\frn]$. Let $E=(\cL_E,\Phi^E)$ be a Drinfeld module of rank two over $S$ with a $\Gamma_1^\Delta(\frn)$-structure
	$(\lambda,\mu)$. Then there exists an isomorphism 
	\[
	\AD_{(E,\lambda,\mu)}: E\to E^D
	\]
	of Drinfeld modules over $S$. 
\end{thm}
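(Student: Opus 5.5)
The plan is to pull back the section $\mathfrak{h}$ from the universal object and use it, as suggested in \cite[Remark 2.20]{Ha_HTT}, to define a map $\cL_E\to\cL_E^{\otimes -q}$.

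Since $Y_1^\Delta(\frn)_{A[1/\frn]}$ represents the moduli problem, the triple $(E,\lambda,\mu)$ corresponds to a unique morphism $f:S\to Y_1^\Delta(\frn)_{A[1/\frn]}$ together with an isomorphism $E\simeq f^*E^\Delta_\univ$. Pulling back $\mathfrak{h}|_{Y_1^\Delta(\frn)}$ gives a section
\[
H:=f^*\mathfrak{h}\in \omega_E^{\otimes q+1}(S)=\cL_E^{\otimes -1-q}(S).
\]
By (\ref{EqnHsectionRelation}) it satisfies $H^{\otimes q-1}=-\alpha_2^E(t)$. This section is nowhere vanishing because $\alpha_2^E(t)$ is. The pulled-back section does not depend on the choice of the isomorphism $E\simeq f^*E^\Delta_\univ$: an automorphism of $(E,\lambda,\mu)$ must act compatibly with the universal property, so $H$ is canonical.

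Multiplication by $H$ defines an isomorphism of invertible sheaves $\cL_E\to \cL_E\otimes\cL_E^{\otimes-1-q}=\cL_E^{\otimes -q}$. This in turn gives a $\Ga$-equivariant isomorphism $\bV_*(\cL_E)\to\bV_*(\cL_E^{\otimes-q})=E^D$, using the identification of \S\ref{SubsecDualDM}. It remains to check $A$-linearity, and it suffices to check compatibility with $\Phi_t$. The question is Zariski local, and the construction of $E^D$ is compatible with localization, so we may assume $S=\Spec(B)$ and $\cL_E=\cO_S l$. Write $H=\mathbf{h}\, l^{\otimes -1-q}$, so that $\mathbf{h}\in B^\times$ and $\mathbf{h}^{q-1}=-\mathbf{a}_2$.

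In the coordinates (\ref{EqnIdentificationXY}), the map sends $X$-coordinate $x$ to $Y$-coordinate $\mathbf{h}x$. We must check
\[
\mathbf{h}(\theta+\mathbf{a}_1\tau+\mathbf{a}_2\tau^2)=(\theta-\mathbf{a}_1\mathbf{a}_2^{-1}\tau+\mathbf{a}_2^{-q}\tau^2)\mathbf{h}.
\]
Comparing coefficients:
\begin{itemize}
\item For $\tau$, we need $\mathbf{h}\mathbf{a}_1=-\mathbf{a}_1\mathbf{a}_2^{-1}\mathbf{h}^q$, i.e.\ $\mathbf{h}^{q-1}=-\mathbf{a}_2$.
\item For $\tau^2$, we need $\mathbf{h}\mathbf{a}_2=\mathbf{a}_2^{-q}\mathbf{h}^{q^2}$, i.e.\ $\mathbf{a}_2^{q+1}=\mathbf{h}^{q^2-1}=(\mathbf{h}^{q-1})^{q+1}=(-\mathbf{a}_2)^{q+1}$.
\end{itemize}
The second identity holds since $q+1$ is even when $p$ is odd, and signs are irrelevant when $p=2$. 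Both identities follow from $\mathbf{h}^{q-1}=-\mathbf{a}_2$, so the map is an isomorphism of Drinfeld modules. We set $\AD_{(E,\lambda,\mu)}$ to be this map.

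The main obstacle is not this computation but the existence of $\mathfrak{h}$ over $A[1/\frn]$ with the relation (\ref{EqnHsectionRelation}), which the text preceding the theorem already provides via the $x$-expansion principle. Given that, the only subtle point is making sure the local sign and normalization conventions for $E^D$ in (\ref{EqnTmodStrOnDualDM}) match the coordinates used. That is precisely the coefficient check above.
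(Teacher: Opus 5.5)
Your proposal is correct and follows essentially the same route as the paper. You pull back $\mathfrak{h}$ along the classifying map $f$, use $(f^*\mathfrak{h})^{\otimes q-1}=-\alpha_2^E(t)$, and take multiplication by $f^*\mathfrak{h}$ as the isomorphism $\cL_E\to\cL_E^{\otimes -q}$; your explicit $\tau$- and $\tau^2$-coefficient check just spells out what the paper compresses into ``by (\ref{EqnTmodStrOnDualDM}), it is $A$-linear.'' The aside claiming $H$ is independent of the choice of $E\simeq f^*E^\Delta_\univ$ is not needed for this existence statement and is only sketched, so you can drop it.
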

\begin{proof}
	The isomorphism class of $(E,\lambda,\mu)$ defines a morphism $f:S\to Y_1^\Delta(\frn)_{A[1/\frn]}$. By the relation (\ref{EqnHsectionRelation}),
	the element $f^*\mathfrak{h}\in \cL_E^{\otimes -1-q}(S)$ is nowhere 
	vanishing on $S$ and satisfies
	\begin{equation}\label{EqnH-functionSectionRelation}
	(f^*\mathfrak{h})^{\otimes q-1}=-\alpha_2^E(t)\in \cL_E^{\otimes 1-q^2}(S).
	\end{equation}
	Then the isomorphism of $\cO_S$-modules
	\[
	\cL_E\to \cL_E^{\otimes -q},\quad l\mapsto l\otimes f^*\mathfrak{h}
	\]
	gives an isomorphism of $\bF_q$-module schemes 
	\[
	\AD_{(E,\lambda,\mu)}:E=\bV_*(\cL_E)\to \bV_*(\cL_E^{\otimes -q})=E^D. 
	\]
	By (\ref{EqnTmodStrOnDualDM}), we see that it 
	is $A$-linear. 
	This concludes the proof.
\end{proof}

By Theorem \ref{ThmAutodual}, we also have an isomorphism of $\cO_S$-modules
\begin{equation}\label{EqnAutodualHodge}
\AD_{(E,\lambda,\mu)}^*:\omega_{E^D}=\cL_E^{\otimes q}\to \cL_E^{\otimes -1}=\omega_E ,\quad l\mapsto l\otimes f^*\mathfrak{h}.
\end{equation}

\begin{lem}\label{LemH-FunctionOnB}
	Let $b\in B_R^\Delta$ be any element satisfying $b^{q-1}=-a_2$. Then we have $b\in x(\sB_R^\Delta)^\times$.
\end{lem}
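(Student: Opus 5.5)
We show that $b/x$ is integral over $\sB_R^\Delta$ and a unit there. The ingredients are (\ref{EqnTDCoeff}), which gives $a_2\in x^{q-1}A[[x^{q-1}]]^\times$, together with the normality of $\sB_R^\Delta$.

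Recall from the discussion after (\ref{EqnDfnEta}) that the map $R((x^{q-1}))\to B_R^\Delta$ factors through $R((x))$. Likewise, $R[[x^{q-1}]]\to\sB_R^\Delta$ factors through $R[[x]]$. So $x$ is an element of $B_R^\Delta$, and $x$ is a unit there since it is a unit in $R((x))$. Write $a_2=x^{q-1}u$ with $u\in R[[x^{q-1}]]^\times$; then $u$ is a unit of $\sB_R^\Delta$. Set $c:=b/x\in B_R^\Delta$. The relation $b^{q-1}=-a_2$ gives
\[
c^{q-1}=-u.
\]

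Now $c$ is a root of the monic polynomial $T^{q-1}+u$, whose coefficients lie in $\sB_R^\Delta$, so $c$ is integral over $\sB_R^\Delta$. Next we need $\sB_R^\Delta$ to be integrally closed in $B_R^\Delta$. By definition $\sB_R^\Delta$ is the normalization of $R[[x^{q-1}]]$ in $B_R^\Delta$, i.e.\ the integral closure of $R[[x^{q-1}]]$ in $B_R^\Delta$. Since $\sB_R^\Delta$ is integral over $R[[x^{q-1}]]$, transitivity of integral dependence shows that any element of $B_R^\Delta$ integral over $\sB_R^\Delta$ is integral over $R[[x^{q-1}]]$, hence already lies in $\sB_R^\Delta$. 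Therefore $c\in\sB_R^\Delta$.

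Finally, $c\cdot c^{q-2}(-u)^{-1}=1$ in $\sB_R^\Delta$, so $c$ is a unit, and $b=xc\in x(\sB_R^\Delta)^\times$.

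The only step needing care is that $x$ and $u^{-1}$ lie in $\sB_R^\Delta$. This holds because the structure map factors through $R[[x]]$, which is recorded before (\ref{EqnDfnLx}); given that, everything else is formal.
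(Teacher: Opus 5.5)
Your proof is correct and is essentially the paper's argument: set $c=x^{-1}b\in B_R^\Delta$, use (\ref{EqnTDCoeff}) to get $c^{q-1}=-x^{1-q}a_2\in R[[x^{q-1}]]^\times$, and conclude $c\in(\sB_R^\Delta)^\times$ because $\sB_R^\Delta$ is the integral closure of $R[[x^{q-1}]]$ in $B_R^\Delta$. The transitivity step is unnecessary, since $T^{q-1}+u$ already has coefficients in $R[[x^{q-1}]]$.
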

\begin{proof}
	In the $R((x^{q-1}))$-algebra $B_R^\Delta$, (\ref{EqnTDCoeff}) implies
	\[
	\left(x^{-1}b\right)^{q-1}=-x^{1-q}a_2\in R[[x^{q-1}]]^\times.
	\]
	Since $\sB^\Delta_R$ is defined as the normalization of $R[[x^{q-1}]]$ in $B^\Delta_R$, we have $x^{-1}b\in (\sB^\Delta_R)^\times$.
\end{proof}

\begin{cor}\label{CorH-FunctionOnP}
	For the map $T^\td:\cP^\Delta_R=\Spec(B^\Delta_R)\to Y_1^\Delta(\frn)_R$, write
	\[
	(T^\td)^*\mathfrak{h}=b_h (dX)^{\otimes q+1},\quad b_h\in B^\Delta_R.
	\]
	Then we have $b_h^{q-1}=-a_2$ and $b_h\in x(\sB^\Delta_R)^\times$.
\end{cor}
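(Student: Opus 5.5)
Pulling the relation (\ref{EqnHsectionRelation}) back along $T^\td:\cP^\Delta_R\to Y_1^\Delta(\frn)_R$ should give $b_h^{q-1}=-a_2$; Lemma \ref{LemH-FunctionOnB} then yields $b_h\in x(\sB^\Delta_R)^\times$ at once. So the work is to compute both sides of (\ref{EqnHsectionRelation}) after pull-back, in terms of the trivialization $dX$ of the Hodge bundle.

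First, base change the section. The section $\mathfrak{h}$ lives on $X_1^\Delta(\frn)_{A[1/\frn]}$, and (\ref{EqnHsectionRelation}) holds there. Its pull-back to $Y_1^\Delta(\frn)_R$ along $R$ over $A[1/\frn]$ uses the compatibility of $\omega^\Delta_\univ$ with base change; on the open part this is just pull-back of $\cL^\Delta_\univ$ along $Y_1^\Delta(\frn)_R\to Y_1^\Delta(\frn)_{A[1/\frn]}$. The pulled-back section, still called $\mathfrak{h}$, satisfies $\mathfrak{h}^{\otimes q-1}=-\alpha_2^{E^\Delta_\univ}(t)$ in $(\omega^\Delta_\univ)^{\otimes q^2-1}$.

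Next, compare along $T^\td$. We pull back via $T^\td$ and use the canonical isomorphism (\ref{EqnEunivPullBackP}), $\TD^\td(\Lambda)|_{B^\Delta_R}\simeq E^\Delta_\univ|_{\cP^\Delta_R}$. Under this isomorphism the Hodge bundle becomes $B^\Delta_R\,dX$, so any section of $(\omega^\Delta_\univ)^{\otimes k}$ pulls back to an element of $B^\Delta_R\,(dX)^{\otimes k}$. Since $\TD^\td(\Lambda)$ has trivial underlying sheaf with generator $l$ dual to $dX=l^{\otimes -1}$, we have $\Phi_t=\theta+a_1\tau+a_2\tau^2$ with $\alpha_2=a_2\,l^{\otimes 1-q^2}=a_2\,(dX)^{\otimes q^2-1}$. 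The coefficient $\alpha_2^E(t)$ is functorial under isomorphisms of Drinfeld modules. Therefore $(T^\td)^*\alpha_2^{E^\Delta_\univ}(t)=a_2\,(dX)^{\otimes q^2-1}$.

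Then compare the two sides. Since $(T^\td)^*\mathfrak{h}=b_h(dX)^{\otimes q+1}$, its $(q-1)$-st tensor power is $b_h^{q-1}(dX)^{\otimes q^2-1}$. Because $dX$ is a basis, comparing coefficients gives $b_h^{q-1}=-a_2$, and Lemma \ref{LemH-FunctionOnB} gives the second assertion.

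The main obstacle is bookkeeping rather than mathematics. The one point needing care is that the identification of $(T^\td)^*\omega^\Delta_\univ$ with $B^\Delta_R\,dX$ is the same one under which $\alpha_2$ corresponds to $a_2$. Both come from the single isomorphism (\ref{EqnEunivPullBackP}), so they agree, but I would check this explicitly.
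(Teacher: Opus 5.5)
Your proposal is correct and follows the paper's proof. The paper likewise obtains $b_h^{q-1}=-a_2$ by transporting the relation (\ref{EqnH-functionSectionRelation}) along the isomorphism (\ref{EqnEunivPullBackP}) and then concludes with Lemma \ref{LemH-FunctionOnB}; your extra steps (base change from $A[1/\frn]$ to $R$, and checking that $\alpha_2=a_2(dX)^{\otimes q^2-1}$ when $dX=l^{\otimes -1}$) only make explicit what the paper leaves implicit.
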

\begin{proof}
	By (\ref{EqnEunivPullBackP}) and the relation (\ref{EqnH-functionSectionRelation}), we have $b_h^{q-1}=-a_2$. Then 
	Lemma \ref{LemH-FunctionOnB} concludes the proof.
\end{proof}

\subsection{Relationship between autoduality and $\Gamma_1^\Delta(\frn)$-structures}\label{SubsecIntrinsic}

Here we give an intrinsic construction of the Drinfeld modular curve $Y_1^\Delta(\frn)_R$ which leads to a more direct proof of Theorem \ref{ThmAutodual}.
The content of this subsection will not be used in the rest of the paper.
We denote the fiber product and the tensor product over $\bF_q$ by $\times$ and $\otimes$, respectively.
We use the notation and definitions from \cite[\S7]{Boeckle}.

\begin{dfn}\label{DfnH-structure}
	Let $S$ be a scheme over $A$ and let $E=(\cL_E,\Phi^E)$ be a Drinfeld module of rank two over $S$.
	An $h$-structure on $E$ is a section $H\in \cL_E^{\otimes -1-q}(S)$ satisfying 
	\[
	H^{\otimes q-1}=-\alpha_2^E(t).
	\] 
	If an $h$-structure $H$ on $E$ exists, then the section $H$ is nowhere vanishing and the invertible $\cO_S$-module $\cL_E^{\otimes -1-q}$ is 
	trivial.
\end{dfn}

Let $E_\univ=(\cL_\univ,\Phi^{E_\univ})$ be the universal Drinfeld module over $Y_1(\frn)_R$. Then the element 
$\alpha_2^{E_\univ}(t)\in \cL_\univ^{\otimes 1-q^2}(Y_1(\frn))$ defines a section
\[
\alpha_2:Y_1(\frn)_R\to \bV_*(\cL_\univ^{\otimes 1-q^2}).  
\]
Consider the morphism of schemes over $Y_1(\frn)_R$
\[
\bV_*(\cL_\univ^{\otimes -1-q})\to \bV_*(\cL_\univ^{\otimes 1-q^2}),\quad l\mapsto l^{q-1}.
\]
Define a scheme $Y_1^h(\frn)_R$ by the cartesian diagram
\[
\xymatrix{
	Y_1^h(\frn)_R\ar[r] \ar[d]& Y_1(\frn)_R \ar[d]^{\alpha_2}\\
	\bV_*(\cL_\univ^{\otimes -1-q})\ar[r] & \bV_*(\cL_\univ^{\otimes 1-q^2}).
}
\]
Then $Y_1^h(\frn)_R$ represents the functor over $Y_1(\frn)_R$ defined by
\[
S\mapsto \{H\in (\cL_\univ^{\otimes -1-q}|_S)(S)\mid H^{\otimes q-1}=-\alpha_2^{E_\univ}(t)|_S\}.
\]
The group $\bF_q^\times$ acts on $Y_1^h(\frn)_R$ over $Y_1(\frn)_R$ by
\[
[c](H)=c^{-1}H\quad (c\in \bF_q^\times).
\]
This makes $Y_1^h(\frn)_R$ a finite \'{e}tale $\bF_q^\times$-torsor over $Y_1(\frn)_R$.

For any scheme $S$ over $A$ and any Drinfeld module $E$ of rank $r$ over $S$, the $\cO_S$-module
\[
\sHom_{\bF_q,S}(E,\Ga)=\bigoplus_{m\geq 0} \cL_E^{\otimes -q^m}
\]
is endowed with the right $A$-action via $\Phi^E$. We denote by $\cM(E)$ the associated $\tau$-sheaf over $A$ on $S$
\cite[Example 7.7 (b)]{Boeckle}. It is a locally free
$\cO_{S\times\Spec(A)}$-module of rank $r$ such that, for the first projection $\prjt_1:S\times \Spec(A)\to S$, we have
\[
(\prjt_1)_*\cM(E)=\sHom_{\bF_q,S}(E,\Ga).
\]
Moreover, it is equipped with an $\cO_{S\times\Spec(A)}$-linear map $\tau:(\sigma_S\times 1)^*\cM(E)\to \cM(E)$ induced by
\[
\sHom_{\bF_q,S}(E,\Ga)^{(q)}\to \sHom_{\bF_q,S}(E,\Ga),\quad 1\otimes f\mapsto \tau\circ f.
\]
We denote by
\[
\cD(E):=\bigwedge^r\cM(E)
\]
the exterior power of $\cM(E)$ as an $\cO_{S\times\Spec(A)}$-module. We consider $\cD(E)$ as a $\tau$-sheaf with the map $\bigwedge^r\tau$.

\begin{lem}\label{LemDetE}
	The $\tau$-sheaf $\cD(E)$ is isomorphic to $\prjt_1^*\cL_E^{\otimes -\frac{1-q^r}{1-q}}$ with the $\tau$-structure
	\[
	(-1)^r(\theta-t)\prjt_1^*(\alpha_r^E(t)^{\otimes -1}\tau).
	\]
\end{lem}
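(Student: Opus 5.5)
The plan is to construct a canonical isomorphism $\prjt_1^*\cL_E^{\otimes -\frac{1-q^r}{1-q}}\to \cD(E)$ directly, and then compute the $\tau$-structure locally on $S$ where $\cL_E$ is trivial. Both sides are compatible with base change and gluing, so this reduces everything to the affine, trivialized case.

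\textbf{Step 1: local structure of $\cM(E)$.} Let $S=\Spec(B)$ with $\cL_E=\cO_S l$, and write $\alpha^E_i(t)=\mathbf{a}_i l^{\otimes 1-q^i}$ as in \S\ref{SubsecTrivCase}. Then
\[
(\prjt_1)_*\cM(E)=\bigoplus_{m\geq 0}B\tau^m,
\]
where $\tau^m$ corresponds to $l^{\otimes -q^m}$. The right action of $t$ is $f\mapsto f\circ\Phi^E_t$, so $\tau^m\cdot t=\theta^{q^m}\tau^m+\mathbf{a}_1^{q^m}\tau^{m+1}+\cdots+\mathbf{a}_r^{q^m}\tau^{m+r}$. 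Since $\mathbf{a}_r$ is a unit, induction on the degree in $\tau$ shows that $e_i:=\tau^i$ for $0\le i\le r-1$ form a $B[t]$-basis of $\cM(E)(S\times\Spec(A))=B\otimes_{\bF_q}A$. This is the standard fact behind the rank-$r$ statement recalled from \cite{Boeckle}.

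\textbf{Step 2: the canonical map.} Define an $\cO_{S\times\Spec(A)}$-linear map
\[
\prjt_1^*\big(\cL_E^{\otimes -1}\otimes\cL_E^{\otimes -q}\otimes\cdots\otimes\cL_E^{\otimes -q^{r-1}}\big)\to\bigwedge^r\cM(E),\quad l_0\otimes\cdots\otimes l_{r-1}\mapsto l_0\wedge\cdots\wedge l_{r-1},
\]
where $l_i\in\cL_E^{\otimes -q^i}$ is viewed as a section of $\sHom_{\bF_q,S}(E,\Ga)$. This map is intrinsic, so it glues. Locally it sends the generator $l^{\otimes -1}\otimes\cdots\otimes l^{\otimes -q^{r-1}}$ to $e_0\wedge\cdots\wedge e_{r-1}$, which is a basis of $\cD(E)$ by Step 1. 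Hence it is an isomorphism, and the source is $\prjt_1^*\cL_E^{\otimes -\frac{1-q^r}{1-q}}$.

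\textbf{Step 3: the $\tau$-structure.} Locally, $\tau$ sends $1\otimes e_i$ to $e_{i+1}$ for $i<r-1$. For the last basis vector, the identity $e_0\cdot t=\theta e_0+\mathbf{a}_1e_1+\cdots+\mathbf{a}_re_r$ gives
\[
\tau(1\otimes e_{r-1})=e_r=\mathbf{a}_r^{-1}\Big((t-\theta)e_0-\sum_{i=1}^{r-1}\mathbf{a}_ie_i\Big).
\]
The matrix of $\tau$ is therefore a companion-type matrix, and its determinant (expanding along the row of $e_0$) is $(-1)^{r-1}\mathbf{a}_r^{-1}(t-\theta)=(-1)^r(\theta-t)\mathbf{a}_r^{-1}$. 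Thus
\[
\textstyle\bigwedge^r\tau\big(1\otimes(e_0\wedge\cdots\wedge e_{r-1})\big)=(-1)^r(\theta-t)\mathbf{a}_r^{-1}\,e_0\wedge\cdots\wedge e_{r-1}.
\]

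\textbf{Step 4: matching with the stated formula.} On the source, the pullback of the generator by $\sigma_S\times 1$ is $l^{\otimes -q}\otimes\cdots\otimes l^{\otimes -q^r}$. The map $\prjt_1^*(\alpha_r^E(t)^{\otimes -1}\tau)$ sends it to $\mathbf{a}_r^{-1}\,l^{\otimes q^r-1}\otimes l^{\otimes -q}\otimes\cdots\otimes l^{\otimes -q^r}=\mathbf{a}_r^{-1}\,l^{\otimes -1}\otimes\cdots\otimes l^{\otimes -q^{r-1}}$, using $\alpha_r^E(t)^{\otimes -1}=\mathbf{a}_r^{-1}l^{\otimes q^r-1}$. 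Multiplying by $(-1)^r(\theta-t)$ recovers exactly the local formula from Step 3, so the isomorphism of Step 2 is an isomorphism of $\tau$-sheaves.

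\textbf{Expected obstacle.} The main care point is sign and ordering conventions. One must keep track of how scalars from $B$ act on the left versus how $t$ acts on the right, and handle the semilinearity of $\tau$ (the coefficients of $e_i$ become $q$-th powers under $1\otimes$). One also has to check that the determinant of the companion matrix gives $(-1)^r(\theta-t)$ and not its negative. This is the step I would verify first in the case $r=1$, where $\tau(1\otimes e_0)=e_1=\mathbf{a}_1^{-1}(t-\theta)e_0$ agrees with $-(\theta-t)\mathbf{a}_1^{-1}$.
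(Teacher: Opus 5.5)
Your proposal is correct and follows the paper's route: trivialize $\cL_E$ locally, take the basis $l^{\otimes -1},\ldots,l^{\otimes -q^{r-1}}$ of $\cM(E)$ (your $e_0,\ldots,e_{r-1}$), compute $\bigwedge^r\tau$ as multiplication by $(-1)^r(\theta-t)\mathbf{a}_r^{-1}$, and glue. Your intrinsic map $l_0\otimes\cdots\otimes l_{r-1}\mapsto l_0\wedge\cdots\wedge l_{r-1}$ makes explicit the gluing (independence of the choice of $l$), and your companion-matrix determinant supplies the computation; the paper asserts both without detail.
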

\begin{proof}
	Let $U\subseteq S$ be an affine open subscheme such that $\cL_E|_U$ is trivial. Write $\cL_E|U=\cO_U l$ with some nowhere vanishing section $l$.
	Then $\cM(E)|_{U\times \Spec(A)}$ is a free $\cO_{U\times \Spec(A)}$-module generated by 
	\[
	l^{\otimes -1},l^{\otimes -q},\ldots, l^{\otimes -q^{r-1}}.
	\]
	Hence we have
	\[
	\cD(E)|_{U\times \Spec(A)}=\cO_{U\times \Spec(A)} l^{\otimes -\frac{1-q^r}{1-q}}
	\]
	and, if we write $\alpha_r^E(t)=g_r l^{\otimes 1-q^r}$ with some $g_r\in \cO(U)^\times$, then its $\tau$-structure is given by
	\[
	(\sigma_U\times 1)^*l^{\otimes -\frac{1-q^r}{1-q}}\mapsto (-1)^r(\theta-t)\prjt_1^*(g_r^{-1}) l^{\otimes -\frac{1-q^r}{1-q}}.
	\]
	Then the trivialization
	\[
	\left(\prjt_1^*\cL_E^{\otimes -\frac{1-q^r}{1-q}}\middle)\right|_{U\times\Spec(A)}=\cO_{U\times\Spec(A)}\prjt_1^*l^{\otimes -\frac{1-q^r}{1-q}}
	\]
	gives an isomorphism
	\[
	\begin{aligned}
	\left(\prjt_1^*\cL_E^{\otimes -\frac{1-q^r}{1-q}}\middle)\right|_{U\times\Spec(A)}&\to  \cD(E)|_{U\times\Spec(A)},\\
	\prjt_1^*l^{\otimes -\frac{1-q^r}{1-q}}&\mapsto l^{\otimes -\frac{1-q^r}{1-q}},
	\end{aligned}
	\]
	which glues to yield an isomorphism of $\tau$-sheaves as claimed.
\end{proof}


For the Carlitz module $C$, its associated $\tau$-sheaf is given by
\[
\cM(C)=\cO_{\Spec(A)\times \Spec(A)},\quad \tau(1)=t-\theta.
\]

\begin{lem}\label{LemDetEHCarlitz}
	Let $S$ be a scheme over $A$ and let $E$ be a Drinfeld module of rank two over $S$ equipped with an $h$-structure $H$.
	Then we have an isomorphism of $\tau$-sheaves over $A$ on $S$
	\[
	\nu_H: \cD(E)\to \cM(C|_S)
	\]
	satisfying $\nu_{[c](H)}=c\nu_H$ for any $c\in \bF_q^\times$.
\end{lem}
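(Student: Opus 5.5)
By Lemma \ref{LemDetE} with $r=2$, the $\tau$-sheaf $\cD(E)$ is identified with $\prjt_1^*\cL_E^{\otimes -1-q}$ with $\tau$-structure
\[
(\theta-t)\prjt_1^*(\alpha_2^E(t)^{\otimes -1}\tau),
\]
while $\cM(C|_S)=\cO_{S\times\Spec(A)}$ with $\tau(1)=t-\theta$. I will define $\nu_H$ as the $\cO_{S\times\Spec(A)}$-linear map
\[
\prjt_1^*\cL_E^{\otimes -1-q}\to \cO_{S\times\Spec(A)},\quad \prjt_1^*l\mapsto \prjt_1^*(l\otimes H^{\otimes -1}),
\]
that is, the trivialization of $\cL_E^{\otimes -1-q}$ given by the nowhere vanishing section $H$ of Definition \ref{DfnH-structure}, pulled back along $\prjt_1$. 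Since $H$ is nowhere vanishing, this is an isomorphism of $\cO_{S\times\Spec(A)}$-modules. It remains to check that it intertwines the $\tau$-structures and to verify the $\bF_q^\times$-equivariance.

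The compatibility with $\tau$ is local, so I work on an affine open $U$ on which $\cL_E=\cO_U l$. I write $H=h\,l^{\otimes -1-q}$ with $h\in\cO(U)^\times$ and $\alpha_2^E(t)=g_2 l^{\otimes 1-q^2}$, so that the $h$-structure condition reads $h^{q-1}=-g_2$.

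\begin{itemize}
\item In the generator $e=l^{\otimes -1-q}$ of $\cD(E)$, the proof of Lemma \ref{LemDetE} gives $\tau((\sigma\times1)^*e)=(\theta-t)g_2^{-1}e$.
\item The map $\nu_H$ sends $e\mapsto h^{-1}$, and $(\sigma\times 1)^*e\mapsto (\sigma\times1)^*(h^{-1})=h^{-q}$, since the Frobenius acts on the $S$-factor.
\item Applying $\tau$ on the Carlitz side to $h^{-q}$ gives $(t-\theta)h^{-q}$.
\item Applying $\nu_H$ to $\tau((\sigma\times1)^*e)$ gives $(\theta-t)g_2^{-1}h^{-1}$.
\end{itemize}

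Equality of the last two expressions is equivalent to $-g_2^{-1}h^{-1}=h^{-q}$, i.e.\ $h^{q-1}=-g_2$, which is exactly the defining relation of the $h$-structure. This sign and exponent check is the only real point; the main risk is a mismatch in conventions between Lemma \ref{LemDetE}'s $(-1)^r(\theta-t)$ and Carlitz's $t-\theta$. If a sign discrepancy appears, I would replace $\nu_H$ by its negative or by $h\mapsto h^{-1}$-type adjustments, but the computation above suggests the direct choice works. Independence of the choice of $l$ is clear because $\nu_H$ is defined intrinsically, so the local maps glue.

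Finally, $[c](H)=c^{-1}H$, so $\nu_{[c](H)}$ sends $\prjt_1^*l$ to $\prjt_1^*(l\otimes cH^{\otimes -1})$. This equals $c\,\nu_H(\prjt_1^*l)$, giving $\nu_{[c](H)}=c\nu_H$.
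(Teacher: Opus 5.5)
Your proposal is correct and follows the paper's proof. In both, $\nu_H$ is multiplication by $H^{\otimes -1}$ on $\prjt_1^*\cL_E^{\otimes -1-q}\simeq\cD(E)$, compatibility with $\tau$ comes from Lemma \ref{LemDetE}, and the equivariance follows from $[c](H)=c^{-1}H$. Your local computation reducing the $\tau$-compatibility to $h^{q-1}=-g_2$ is exactly the check the paper leaves implicit, and it confirms that no sign adjustment is needed.
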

\begin{proof}
	Multiplying $H^{\otimes -1}$ induces an isomorphism of $\cO_{S\times \Spec(A)}$-modules
	\[
	\nu_H:\prjt_1^* \cL_E^{\otimes -1-q}\to \prjt_1^* \cO_S=\cO_{S\times \Spec(A)}=\cM(C|_S).
	\]
	Lemma \ref{LemDetE} shows that it is an isomorphism of $\tau$-sheaves. Since $\nu_H(\prjt_1^*H)=1$ and $\nu_{[c](H)}(c^{-1}\prjt_1^*H)=1$,
	we obtain $\nu_{[c](H)}=c\nu_H$.
\end{proof}

The following lemma can be shown in the same way as \cite[Proposition 7.19]{Boeckle}.

\begin{lem}\label{LemBoeckle}
	Let $B$ be a Noetherian normal $A[1/\frn]$-algebra and let $E=(\cL_E,\Phi^E)$ be a Drinfeld module of some rank over $S=\Spec(B)$.
	Then we have a natural isomorphism 
	\[
	E[\frn]\to \sHom_A((\cM(E)/\frn\cM(E))_{\mathrm{\acute{e}t}},\underline{A/(\frn)})
	\]
	of \'{e}tale sheaves of $A/(\frn)$-modules on $S$, where $(-)_{\mathrm{\acute{e}t}}$ is the $A$-linear 
	functor attaching an \'{e}tale sheaf to a $\tau$-sheaf 
	\cite[Definition 7.16]{Boeckle}.
\end{lem}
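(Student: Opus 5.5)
The plan is to follow the proof of \cite[Proposition 7.19]{Boeckle}, which is the $\tau$-sheaf version of the duality between finite $A$-module schemes and their $\tau$-modules. The key point is that $E[\frn]$ is recovered from the $\tau$-module $\cM(E)/\frn\cM(E)$ as the scheme of $\tau$-invariant linear functionals.

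First, work Zariski locally on $S$. Take an affine open $U=\Spec(B')$ on which $\cL_E$ is trivial, with generator $l$, so that $E=\Spec(B'[X])$. For any $B$-algebra $B''$, a point $e\in E[\frn](B'')$ defines a map
\[
\sHom_{\bF_q,S}(E,\Ga)(B'')\to B'',\qquad f\mapsto f(e).
\]
Since $e$ is $\frn$-torsion, this map factors through $\cM(E)/\frn\cM(E)$, because $(f\circ\Phi^E_\frn)(e)=f(0)=0$. It is $\bF_q$-linear and intertwines $\tau$ with the $q$-th power map, since $(\tau\circ f)(e)=f(e)^q$. Making the $A$-linear structure precise as in \cite{Boeckle}, this yields an $A$-linear morphism into $\sHom_A((\cM(E)/\frn\cM(E))_{\mathrm{\acute{e}t}},\underline{A/(\frn)})$. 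The morphism is natural in $B''$ and is compatible with the gluing of the local trivializations, since changing $l$ by a unit does not affect evaluation at $e$.

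Next, show that this morphism is an isomorphism of étale sheaves. Since $\frn$ is invertible on $S$, the $\tau$-module $\cM(E)/\frn\cM(E)$ is locally free over $\cO_S$ of rank $r\deg\frn$ with $\tau$ linearized bijective, so it is étale. Because $B$ is Noetherian and normal, and in particular a product of domains on connected components, Katz's equivalence applies: it identifies étale $\tau$-sheaves with locally constant étale sheaves of $\bF_q$-vector spaces, via $M\mapsto (M\otimes\cO_{S_{\mathrm{\acute{e}t}}})^{\tau=1}$. Under this equivalence the $A$-linear dual of $M_{\mathrm{\acute{e}t}}$ is identified with $\Hom_{\cO,\tau}(M,\cO)$, and that is exactly what the construction above produces. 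To check bijectivity, pass to a geometric point, or to a finite étale cover trivializing $E[\frn]$. There it suffices to compare orders: both sides are free $A/(\frn)$-modules of rank $r$, and the map is injective because a point killed by all $f$ (in particular by $X$) is zero.

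The main obstacle is the precise compatibility of the $A$-module structures and the definition of $(-)_{\mathrm{\acute{e}t}}$ from \cite[Definition 7.16]{Boeckle}. Concretely, one must check that the right $A$-action on $\cM(E)$ via $\Phi^E$ corresponds to the $A$-action on $E[\frn]$ under the Hom-duality, including the twist between the two copies of $\Spec(A)$. I would handle this exactly as in \cite[Proposition 7.19]{Boeckle}, reducing to the $\bF_q$-linear statement and then verifying $A$-equivariance on the generator $t$.
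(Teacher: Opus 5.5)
Your proposal is correct and follows the paper's route. Both proofs use the evaluation map $e\mapsto(f\mapsto f(e))$, which the paper packages $A$-linearly as $e\mapsto(f\mapsto(a\mapsto f(ae)))$ with values in $\Hom_{\bF_q}(A/(\frn),B')\simeq A/(\frn)\otimes B'$ via the residue pairing $(a,b)\mapsto\Res_\infty(\frn^{-1}ab\,dt)$ (exactly the ingredient you defer to B\"{o}ckle), and then check bijectivity at geometric points. The only difference is that the paper cites Anderson's Proposition~1.8.3 for that check, while your injectivity-plus-cardinality argument does it directly, using that the $\tau$-invariants span $\cM(E)/\frn\cM(E)$ over a separably closed field.
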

\begin{proof}
	For any \'{e}tale ring map $u:B\to B'$, we have a natural map
	\[
	\begin{aligned}
		E[\frn](B')&\to \Hom_{\tau,A,B'}(u^*\cM(E)/\frn u^*\cM(E),\Hom_{\bF_q}(A/(\frn),B')),\\
		e&\mapsto (f\mapsto (a\mapsto (f(ae)))).
	\end{aligned}
	\]
	Here $\Hom_{\tau,A,B'}$ denotes the group of $(B'\otimes A)$-linear homomorphisms compatible with $\tau$
	and the $\tau$-structure on the ring $B'$ is given by the $q$-th power Frobenius map.
	
	Note that we have isomorphisms
	of $(B'\otimes A)$-modules
	\[
	A/(\frn)\otimes B'\to \Hom_{\bF_q}(A/(\frn),\bF_q)\otimes B'\to \Hom_{\bF_q}(A/(\frn),B'),
	\]
	where the left one is induced by the perfect pairing
	\[
	A/(\frn)\otimes A/(\frn)\to \bF_q,\quad (a\bmod \frn,b\bmod \frn)\mapsto \Res_\infty(\frn^{-1}abdt).
	\]
	Since $B$ is Noetherian and normal, so is $B'$ and we can write
	\[
	B'=\prod_{i\in I}B'_i,\quad |I|<+\infty
	\]
	with some normal domain $B'_i$. This implies that the $\tau$-invariant subring of $B'$ equals
	\[
	(B')^\tau=\prod_{i\in I} \bF_q
	\]
	and we have $(A/(\frn)\otimes B')^{\tau}=\underline{A/(\frn)}(B')$.
	Thus we obtain a map of \'{e}tale sheaves
	\[
	E[\frn]\to \sHom_A((\cM(E)/\frn\cM(E))_{\mathrm{\acute{e}t}},\underline{A/(\frn)}).
	\]
	Since \cite[Proposition 1.8.3]{Anderson} shows that it is isomorphic at any geometric points, the lemma follows. 
\end{proof}

The following proposition gives a Weil pairing valued in $C[\frn]$ for a Drinfeld module $E$ of rank two admitting an $h$-structure.
Over $\Cinf$, the observation that we can define such a Weil pairing using Gekeler's $h$-function as an $h$-structure also appeared in 
\cite[\S4]{Breuer}.

\begin{prop}\label{PropWeilPair}
	Let $B$ be a Noetherian normal $A[1/\frn]$-algebra and let $E=(\cL_E,\Phi^E)$ be a Drinfeld module of rank two over $S=\Spec(B)$.
	Let $H$ be an $h$-structure on $E$. Then we have an isomorphism
	\[
	f_H: \bigwedge^2 E[\frn]\to  C[\frn]|_B
	\]
	of \'{e}tale sheaves of $A/(\frn)$-modules on $S$. For any $c\in \bF_q^\times$, it satisfies
	 \begin{equation}\label{EqnWeilPairHEquivar}
	 f_{[c]H}=c^{-1}f_H.
	 \end{equation}
\end{prop}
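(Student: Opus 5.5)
The plan is to combine Lemma \ref{LemBoeckle}, applied to both $E$ and $C|_B$, with the determinant isomorphism $\nu_H$ of Lemma \ref{LemDetEHCarlitz}. Put $\bar{\cM}(E):=\cM(E)/\frn\cM(E)$. Since $\cM(E)$ is locally free of rank two over $\cO_{S\times\Spec(A)}$, the sheaf $\bar{\cM}(E)$ is a locally free $\tau$-sheaf over $A/(\frn)$ of rank two. Because $S$ lies over $A[1/\frn]$, its $\tau$-map is an isomorphism after reduction mod $\frn$. The \'{e}tale realization $(-)_{\mathrm{\acute{e}t}}$ of \cite[Definition 7.16]{Boeckle} is taken on such unit $\tau$-sheaves. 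By \cite[\S7]{Boeckle} and \cite[Proposition 1.8.3]{Anderson}, I expect it to be an $A$-linear tensor equivalence onto locally free \'{e}tale sheaves of $A/(\frn)$-modules, compatible with exterior powers. Granting this, we get
\[
\Bigl(\bigwedge^2_{A/(\frn)}\bar{\cM}(E)\Bigr)_{\mathrm{\acute{e}t}}\simeq \bigwedge^2\bigl(\bar{\cM}(E)_{\mathrm{\acute{e}t}}\bigr),
\]
and the left-hand side is $(\cD(E)/\frn\cD(E))_{\mathrm{\acute{e}t}}$.

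The construction then runs as follows. First, Lemma \ref{LemBoeckle} gives $E[\frn]\simeq \sHom_A(\bar{\cM}(E)_{\mathrm{\acute{e}t}},\underline{A/(\frn)})=:\bar{\cM}(E)_{\mathrm{\acute{e}t}}^\vee$. Taking $\bigwedge^2$ over $A/(\frn)$ and using that duality commutes with $\bigwedge^2$ for free rank-two modules, we get
\[
\bigwedge^2 E[\frn]\simeq \Bigl(\bigwedge^2\bar{\cM}(E)_{\mathrm{\acute{e}t}}\Bigr)^\vee\simeq \bigl((\cD(E)/\frn)_{\mathrm{\acute{e}t}}\bigr)^\vee .
\]
Next, we reduce $\nu_H$ mod $\frn$ and apply $(-)_{\mathrm{\acute{e}t}}$ and then duality. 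This gives $((\cD(E)/\frn)_{\mathrm{\acute{e}t}})^\vee\simeq ((\cM(C|_B)/\frn)_{\mathrm{\acute{e}t}})^\vee$, where the direction is reversed by contravariance, so we use $(\nu_H^\vee)^{-1}$. Finally, Lemma \ref{LemBoeckle} for the rank-one module $C|_B$ identifies the last sheaf with $C[\frn]|_B$. The composite of these three isomorphisms is $f_H$.

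For the equivariance (\ref{EqnWeilPairHEquivar}), note that $\nu_{[c]H}=c\nu_H$ by Lemma \ref{LemDetEHCarlitz}. All the functors involved are $\bF_q$-linear, and dualization followed by inversion turns multiplication by $c$ into multiplication by $c^{-1}$. Tracking this through the composite gives $f_{[c]H}=c^{-1}f_H$. I will check this sign and exponent bookkeeping carefully, since it depends on the convention chosen in the second step.

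The main obstacle is the compatibility of $(-)_{\mathrm{\acute{e}t}}$ with $\bigwedge^2$, and its being an equivalence. If \cite{Boeckle} does not state this directly, I will prove the natural map $\bigwedge^2(\bar{\cM}_{\mathrm{\acute{e}t}})\to(\bigwedge^2\bar{\cM})_{\mathrm{\acute{e}t}}$ is an isomorphism by checking it at geometric points. There, by Lang's theorem / \cite[Proposition 1.8.3]{Anderson}, a unit $\tau$-module over a separably closed field is trivialized as $A/(\frn)\otimes k$ with $\tau=\sigma$ on the $k$ factor. The $\tau$-invariants then commute with exterior powers, and the argument of Lemma \ref{LemBoeckle} shows both sides are locally constant \'{e}tale sheaves, which reduces the global statement to this pointwise check.
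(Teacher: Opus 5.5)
Your proposal is correct and follows the paper's proof essentially verbatim. The paper defines $f_H=\phi_C^{-1}\circ(\nu_H^*)^{-1}\circ\delta_E$, where $\delta_E$ comes from Lemma \ref{LemBoeckle} together with the compatibility of $(-)_{\mathrm{\acute{e}t}}$ with exterior powers and of $\bigwedge^2$ with linear duals, and it reads off the equivariance from $\nu_{[c](H)}=c\nu_H$. The compatibility with exterior powers that you were unsure about is exactly \cite[Theorem 7.18]{Boeckle}, which the paper simply cites, so your fallback argument at geometric points is not needed.
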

\begin{proof}
	Applying Lemma \ref{LemBoeckle} to the Carlitz module $C$, we have an isomorphism
	\[
	\phi_C:C[\frn]|_B\to \sHom_A((\cM(C|_B)/\frn\cM(C|_B))_{\mathrm{\acute{e}t}},\underline{A/(\frn)})
	\]
	of \'{e}tale sheaves of $A/(\frn)$-modules on $S$.
	
	On the other hand, since the functor $(-)_{\mathrm{\acute{e}t}}$ commutes with the exterior power over $A/(\frn)$ \cite[Theorem 7.18]{Boeckle}
	and the formation of exterior power commutes with taking the linear dual, the 
	isomorphism of 
	Lemma \ref{LemBoeckle}
	induces an isomorphism 
	\[
	\delta_E:\bigwedge^2 E[\frn]\to \sHom_A((\cD(E)/\frn\cD(E))_{\mathrm{\acute{e}t}},\underline{A/(\frn)})
	\]
	of \'{e}tale sheaves of $A/(\frn)$-modules on $S$. Composing these isomorphisms with the induced map by $\nu_H$ of Lemma \ref{LemDetEHCarlitz}, we obtain an 
	isomorphism
	\[
	\begin{aligned}
	f_H=\phi_C^{-1}\circ(\nu_H^*)^{-1}\circ\delta_E: \bigwedge^2 E[\frn]&\to \sHom_A((\cD(E)/\frn\cD(E))_{\mathrm{\acute{e}t}},\underline{A/(\frn)})\\
	&\to \sHom_A((\cM(C|_B)/\frn\cM(C|_B))_{\mathrm{\acute{e}t}},\underline{A/(\frn)})\\
	&\to C[\frn]|_B
	\end{aligned}
	\]
	of \'{e}tale sheaves of $A/(\frn)$-modules on $S$.

	Moreover, for any $c\in \bF_q^\times$, Lemma \ref{LemDetEHCarlitz} also implies
	\[
	f_{[c](H)}=\phi_C^{-1}\circ(\nu_{[c](H)}^*)^{-1}\circ\delta_E=\phi_C^{-1}\circ(c\nu_H^*)^{-1}\circ\delta_E=c^{-1}f_H.
	\]
	This concludes the proof of the proposition.
\end{proof}

\begin{cor}\label{CorHstrMu}
	Let $B$ be a Noetherian normal $A[1/\frn]$-algebra and let $E=(\cL_E,\Phi^E)$ be a Drinfeld module of rank two over $S=\Spec(B)$ equipped with a $
	\Gamma_1(\frn)$-structure $\lambda$.
	Let $H$ be an $h$-structure on $E$. Then we have an isomorphism
	\[
	\mu_H: \underline{A/(\frn)}\to E[\frn]/\Img(\lambda)
	\]
	of $A$-module schemes over $S$. For any $c\in \bF_q^\times$, it satisfies
	\begin{equation}\label{EqnMuHEquivar}
		\mu_{[c]H}=c\mu_H.
	\end{equation}
\end{cor}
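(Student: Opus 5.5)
The plan is to build $\mu_H$ from the Weil pairing $f_H$ of Proposition \ref{PropWeilPair}, combined with the Carlitz $\frn$-torsion sitting inside $E[\frn]$ via $\lambda$.

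\emph{Step 1: an intrinsic generator of $C[\frn]$.} The $A$-module scheme $C[\frn]|_B$ is étale locally isomorphic to $\underline{A/(\frn)}$, but it has no canonical generator over $B$. So I work with the sheaf $\sIsom_{A,S}(\underline{A/(\frn)},C[\frn]|_B)$ and use the pairing to cancel this ambiguity.

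\emph{Step 2: the key map.} Since $\lambda$ is a closed immersion of $A$-module schemes and $E[\frn]$ is étale locally free of rank two over $A/(\frn)$, the quotient $Q:=E[\frn]/\Img(\lambda)$ is étale locally free of rank one. Hence $\bigwedge^2E[\frn]\simeq \Img(\lambda)\otimes_{A/(\frn)} Q$ canonically: send $\lambda(x)\otimes \bar y$ to $\lambda(x)\wedge y$, which is well defined because $\lambda(x)\wedge\lambda(x')=0$. Composing with $f_H$ and $\lambda$ gives an isomorphism
\[
C[\frn]|_B\otimes_{A/(\frn)} Q\to C[\frn]|_B.
\]
Because $C[\frn]|_B$ is invertible as a sheaf of $A/(\frn)$-modules, this is the same as an isomorphism $Q\simeq \sHom_{A/(\frn)}(C[\frn]|_B,C[\frn]|_B)=\underline{A/(\frn)}$. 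Concretely, $\mu_H(1)$ is the unique section $\bar y$ of $Q$ satisfying $f_H(\lambda(x)\wedge y)=x$ for all local sections $x$ of $C[\frn]$. Uniqueness makes this glue from étale local choices into a global isomorphism of étale sheaves. Both sides are finite étale, so this is an isomorphism of $A$-module schemes over $S$.

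\emph{Step 3: the equivariance.} Using \eqref{EqnWeilPairHEquivar}, $f_{[c]H}(\lambda(x)\wedge y)=c^{-1}f_H(\lambda(x)\wedge y)$. Suppose $\bar y=\mu_H(1)$. Then $f_{[c]H}(\lambda(x)\wedge cy)=c\cdot c^{-1}x=x$, so $\mu_{[c]H}(1)=c\,\mu_H(1)$. This is \eqref{EqnMuHEquivar}.

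The main point needing care is Step 2. I must check that $\lambda(x)\wedge y$ depends only on $\bar y$, and that the induced map $\Img(\lambda)\otimes Q\to\bigwedge^2E[\frn]$ is an isomorphism. Both are verified étale locally, where $E[\frn]\simeq (A/(\frn))^2$ with $\Img(\lambda)$ a direct summand, since $\lambda$ is a closed immersion with free rank-one source. Equivalently, $\lambda(1)$ has exact order $\frn$ in a free module of rank two over $A/(\frn)$, and such an element generates a direct summand.
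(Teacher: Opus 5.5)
Your proposal is correct and follows essentially the paper's route. Both arguments use the isomorphism $\bigwedge^2 E[\frn]\simeq C[\frn]|_B\otimes_{A/(\frn)}(E[\frn]/\Img(\lambda))$ induced by $\lambda$, compose it with the Weil pairing $f_H$ of Proposition \ref{PropWeilPair}, untwist by the rank-one sheaf $C[\frn]|_B$ to get $\mu_H$, and read off $\mu_{[c]H}=c\mu_H$ from $f_{[c]H}=c^{-1}f_H$ (your Step 1 is never actually used).
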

\begin{proof}
	The $\Gamma_1(\frn)$-structure $\lambda:C[\frn]|_B\to E[\frn]$ induces an isomorphism 
	\[
	g:\bigwedge^2 E[\frn]\simeq C[\frn]|_B\otimes_{A/(\frn)} (E[\frn]/\Img(\lambda))
	\]
	of \'{e}tale sheaves of $A/(\frn)$-modules on $S$. With the isomorphism $f_H$ of Proposition \ref{PropWeilPair},
	this gives an isomorphism
	\[
	g\circ f_H^{-1}: C[\frn]|_B\to C[\frn]|_B\otimes_{A/(\frn)} (E[\frn]/\Img(\lambda)).
	\]
	
	Note that we have an isomorphism
	\[
	C[\frn]|_B\otimes_{A/(\frn)} \sHom_{A}(C[\frn]|_B,\underline{A/(\frn)})\to \underline{A/(\frn)}
	\]
	of \'{e}tale sheaves of $A/(\frn)$-modules over $S$.
	Then, twisting by the \'{e}tale sheaf $\sHom_{A}(C[\frn]|_B,\underline{A/(\frn)})$,
	we obtain an isomorphism
	\[
	\mu_H=(g\circ f_H^{-1})(-1): \underline{A/(\frn)}\to E[\frn]/\Img(\lambda).
	\]
	
	Moreover, for any $c\in \bF_q^\times$, (\ref{EqnWeilPairHEquivar}) yields
	\[
	\mu_{[c](H)}=(g\circ f_{[c](H)}^{-1})(-1)=(g\circ (c^{-1}f_H)^{-1})(-1)=c\mu_H.
	\]
	This concludes the proof.
\end{proof}

\begin{thm}\label{ThmHstrVSGammaDeltaStr}
	We have an isomorphism 
	\[
	j^\Delta:Y_1^h(\frn)_R\to Y_1^\Delta(\frn)_R
	\]
	of schemes over $Y_1(\frn)_R$.
\end{thm}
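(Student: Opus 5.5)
The plan is to build $j^\Delta$ directly from Corollary \ref{CorHstrMu}, check that it is an $\bF_q^\times$-equivariant morphism between two $\bF_q^\times$-torsors over $Y_1(\frn)_R$, and conclude that it is an isomorphism.

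First I construct $j^\Delta$ on the universal object. Over $Y_1^h(\frn)_R$ we have the pull-back of $(E_\univ,\lambda_\univ)$ together with the universal $h$-structure $H_\univ$. Since $Y_1^h(\frn)_R$ is finite étale over $Y_1(\frn)_R$, which is smooth over the regular ring $R$, it is Noetherian and normal; being affine, Corollary \ref{CorHstrMu} applies. It yields an isomorphism
\[
\mu_{H_\univ}:\underline{A/(\frn)}\to E_\univ[\frn]/\Img(\lambda_\univ),
\]
that is, a section of $I_{(E_\univ,\lambda_\univ)}$ over $Y_1^h(\frn)_R$. Its image in $I_{(E_\univ,\lambda_\univ)}/\Delta$ is a section $\mu$. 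The triple $(E_\univ,\lambda_\univ,\mu)$ over $Y_1^h(\frn)_R$ is a $\Gamma_1^\Delta(\frn)$-structure and therefore gives a morphism $j^\Delta:Y_1^h(\frn)_R\to Y_1^\Delta(\frn)_R$. This morphism lies over $Y_1(\frn)_R$ because the underlying $(E,\lambda)$ is unchanged. Equivalently, $j^\Delta$ is the morphism to $I_{(E_\univ,\lambda_\univ)}/\Delta=Y_1^\Delta(\frn)_R$ given by $\mu$.

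Next I check equivariance. The group $\bF_q^\times$ acts on $Y_1^h(\frn)_R$ by $H\mapsto c^{-1}H$. On $Y_1^\Delta(\frn)_R=I/\Delta$ the residual action of $(A/(\frn))^\times/\Delta\simeq \bF_q^\times$ comes from multiplication on $I$. By (\ref{EqnMuHEquivar}) we have $\mu_{[c]H}=c\mu_H$, so $j^\Delta$ intertwines $[c]$ with multiplication by $c$ (or $c^{-1}$, depending on convention). In either case $j^\Delta$ is equivariant for the group automorphism $c\mapsto c$ or $c\mapsto c^{-1}$ of $\bF_q^\times$.

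It remains to show that the target is an $\bF_q^\times$-torsor for this action. Because $\Delta\to(A/(\frn))^\times/\bF_q^\times$ is bijective, the composite $\bF_q^\times\to(A/(\frn))^\times\to(A/(\frn))^\times/\Delta$ is bijective. Hence $I/\Delta$, with $I$ an $(A/(\frn))^\times$-torsor, is an $\bF_q^\times$-torsor over $Y_1(\frn)_R$. The source $Y_1^h(\frn)_R$ is also a finite étale $\bF_q^\times$-torsor over $Y_1(\frn)_R$, as noted before Lemma \ref{LemDetE}. An equivariant morphism between torsors under the same finite étale group (up to an automorphism of the group) over a base is automatically an isomorphism: étale locally both torsors are trivial, and the morphism becomes translation by a section.

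The main obstacle I expect is the hypotheses of Corollary \ref{CorHstrMu}, which requires the base to be a Noetherian normal affine scheme. I handle this by applying it once on the universal object $Y_1^h(\frn)_R$ rather than on arbitrary test schemes; this suffices because $j^\Delta$ is determined by the universal object. The only remaining care is to confirm that the $\bF_q^\times$-action on $I/\Delta$ matches the one used in the torsor argument, which is a bookkeeping check with (\ref{EqnMuHEquivar}).
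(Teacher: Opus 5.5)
Your proposal is correct and follows essentially the same route as the paper. You build $j^\Delta$ from $\mu_{H_\univ}$ of Corollary \ref{CorHstrMu} on the Noetherian normal affine scheme $Y_1^h(\frn)_R$, get $\bF_q^\times$-equivariance from (\ref{EqnMuHEquivar}), and conclude because an equivariant morphism of finite \'{e}tale $\bF_q^\times$-torsors over $Y_1(\frn)_R$ is an isomorphism; your check that $I_{(E_\univ,\lambda_\univ)}/\Delta$ is an $\bF_q^\times$-torsor only spells out what the paper takes as known.
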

\begin{proof}
	Since the affine ring of $Y_1(\frn)_R$ is Noetherian and normal, so is that of $Y_1^h(\frn)_R$. For the universal object $(E_\univ, \lambda_\univ)$
	over $Y_1(\frn)_R$ and its universal $h$-structure $H_\univ$ over $Y_1^h(\frn)_R$, Corollary \ref{CorHstrMu} gives an isomorphism
	\[
	\mu_{H_\univ}: \underline{A/(\frn)}\to (E_\univ[\frn]/\Img(\lambda_\univ))|_{Y_1^h(\frn)_R}
	\] 
	of $A$-module schemes over $Y_1^h(\frn)_R$. Then the image of 
	\[
	\mu_{H_\univ}\in I_{(E_\univ,\lambda_\univ)}(Y^h_1(\frn)_R)\to (I_{(E_\univ,\lambda_\univ)}/\Delta)(Y^h_1(\frn)_R)
	\] 
	yields a morphism $j^\Delta: Y_1^h(\frn)_R\to Y_1^\Delta(\frn)_R$ over 
	$Y_1(\frn)_R$.

	The source and the target of $j^\Delta$ are finite \'{e}tale $\bF_q^\times$-torsors over $Y_1(\frn)_R$,
	and (\ref{EqnMuHEquivar}) implies that $j^\Delta$ is $\bF_q^\times$-equivariant. Hence $j^\Delta$ is an isomorphism.
\end{proof}

\begin{cor}\label{CorExtH}
	Let $H$ be an $h$-structure on $E_\univ^\Delta$, which exists by Theorem \ref{ThmHstrVSGammaDeltaStr}. Then $H$ extends to an element of 
	 \[
	 H^0(X_1^\Delta(\frn)_R,(\bar{\omega}_\univ^\Delta)^{\otimes q+1}).
	 \]
\end{cor}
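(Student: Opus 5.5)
The plan is to reduce to the section $\mathfrak{h}$ constructed in \S\ref{SubsecAD}, or rather to its base change to $R$. Over $Y_1^\Delta(\frn)_R$, the ratio of two $h$-structures is a section of $\bF_q^\times$, since both are nowhere vanishing $(q-1)$-st roots of $-\alpha_2^{E^\Delta_\univ}(t)$. Hence $H=\zeta\,\mathfrak{h}|_{Y_1^\Delta(\frn)_R}$ for a locally constant function $\zeta$ with values in $\bF_q^\times$. Because $X_1^\Delta(\frn)_R\to\Spec(R)$ is smooth with geometrically connected fibers and $R$ is a domain, $Y_1^\Delta(\frn)_R$ is connected, so $\zeta$ is a constant $c\in\bF_q^\times$. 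The section $c\,\mathfrak{h}|_{X_1^\Delta(\frn)_R}$ then extends $H$. Its base change from $A[1/\frn]$ to $R$ makes sense by the base change compatibility of $\bar{\omega}^\Delta_\univ$ (Lemma \ref{LemExt} and Remark \ref{RmkFixHTT}).

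I would also give an argument that does not pass through $\mathfrak{h}$ and uses Lemma \ref{LemExt} directly, as a check. A section of $\bar\omega^{\otimes q+1}$ on $X_1^\Delta(\frn)_R$ is the same as a morphism $\cO\to(\bar\omega^\Delta_\univ)^{\otimes q+1}$, which corresponds to a morphism in $\cC_R$ between $(\cO_{Y},\sB^\Delta_R)$ and $((\omega^\Delta_\univ)^{\otimes q+1},\sB^\Delta_R(dX)^{\otimes q+1})$. So it suffices to check that $(T^\td)^*H=b\,(dX)^{\otimes q+1}$ has $b\in\sB^\Delta_R$. Pulling the defining relation back along (\ref{EqnEunivPullBackP}) gives $b^{q-1}=-a_2$. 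Lemma \ref{LemH-FunctionOnB} then yields $b\in x(\sB^\Delta_R)^\times\subseteq\sB^\Delta_R$. Equivariance under $\bF_q^\times$ is automatic because $\sB^\Delta_R(dX)^{\otimes q+1}$ is stable. This gives the extension by the full faithfulness of the functor in Lemma \ref{LemExt}.

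The main point needing care is that a morphism in $\cC_R$ requires $g$ to be the restriction of $(T^\td)^*f$. This holds because $\sM\subseteq M$ and the integrality computation shows that $(T^\td)^*H$ lands in $\sM$. I regard this integrality step, which is Lemma \ref{LemH-FunctionOnB}, as the only real content, and it is already proved. I would present the second argument as the proof, since it avoids any connectedness discussion.
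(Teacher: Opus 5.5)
Your second argument, the one you say you would present, is the paper's proof. Via Lemma \ref{LemExt} (the paper invokes \cite[(15.91.16.1)]{Stacks} alongside it, where you use full faithfulness; this amounts to the same reduction) one only needs $(T^\td)^*H=b\,(dX)^{\otimes q+1}$ with $b\in\sB^\Delta_R$, and this follows from $b^{q-1}=-a_2$ and Lemma \ref{LemH-FunctionOnB}. Your first argument through $\mathfrak{h}$ is also valid, but it relies on the $x$-expansion principle, which this corollary is meant to bypass, so you are right to prefer the second.
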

\begin{proof}
	By Lemma \ref{LemExt} and \cite[(15.91.16.1)]{Stacks}, it is enough to show
	\[
	(T^\td)^*H\in \sB_R^\Delta (dX)^{\otimes q+1}
	\]
	under the isomorphism (\ref{EqnEunivPullBackP}).
	Write $(T^\td)^*H=b (dX)^{\otimes q+1}$ with some $b\in B_R^\Delta$.
	Since $H$ is an $h$-structure on $E_\univ^\Delta$, by (\ref{EqnEunivPullBackP}) we have $b^{q-1}=-a_2$. Then Lemma \ref{LemH-FunctionOnB}
	yields $b\in \sB_R^\Delta$ and the corollary follows.
\end{proof}

Using Corollary \ref{CorExtH}, we can bypass the use of the $x$-expansion principle in the proof of Theorem \ref{ThmAutodual}.

\subsection{Extension of the Hodge filtration and autoduality}\label{SubsecExtHodgeFil}

By Theorem \ref{ThmAutodual}, the universal object $(E_\univ^\Delta,\lambda_\univ,\mu_\univ)$ over $Y_1^\Delta(\frn)_R$ gives an isomorphism
\[
\AD_{E^\Delta_\univ}: E_\univ^\Delta\to (E_\univ^\Delta)^D.
\]
Moreover, (\ref{EqnHodgeFil}) for $E_\univ^\Delta$ yields an exact sequence
\begin{equation}\label{EqnHodgeFilNaiveUniv}
	\xymatrix{
		0 \ar[r] & \omega_\univ^\Delta \ar[r] & \cH_{\dR,\univ}^\Delta \ar[r] & \Lie((E_\univ^\Delta)^D) \ar[r] &0.
	}
\end{equation}
By composing the right map of (\ref{EqnHodgeFilNaiveUniv}) with the dual of the map $\AD_{E^\Delta_\univ}^*$ of (\ref{EqnAutodualHodge}),
we obtain 
a morphism of $\cO_{Y_1^\Delta(\frn)_R}$-modules
\[
\prjt: \cH^\Delta_{\dR,\univ}\to \Lie((E^\Delta_\univ)^D)=(\omega_{(E_{\univ}^\Delta)^D})^\vee\simeq \omega_{E_{\univ}^\Delta}^\vee
\]
and an exact sequence 
\begin{equation}\label{EqnHodgeFilUniv}
\xymatrix{
	0 \ar[r] & \omega_\univ^\Delta \ar[r] & \cH_{\dR,\univ}^\Delta \ar[r]^{\prjt} & (\omega_{\univ}^\Delta)^\vee \ar[r] &0
}
\end{equation}
of finite locally free sheaves on $Y_1^\Delta(\frn)_R$.

\begin{thm}\label{ThmExtProjToLie}
	The map $\prjt$ extends to a surjection of $\cO_{X_1^\Delta(\frn)_R}$-modules
	\[
	\bar{\prjt}: \bar{\cH}^\Delta_{\dR,\univ}\to (\bar{\omega}_{E_{\univ}^\Delta})^\vee
	\]
	which sits in an exact sequence
	\[
	\xymatrix{
		0 \ar[r] & \bar{\omega}_\univ^\Delta \ar[r] & \bar{\cH}_{\dR,\univ}^\Delta \ar[r]^{\bar{\prjt}} & (\bar{\omega}_{\univ}^\Delta)^\vee \ar[r] &0
	}
	\]
	of finite locally free sheaves on $X_1^\Delta(\frn)_R$ extending (\ref{EqnHodgeFilUniv}).
\end{thm}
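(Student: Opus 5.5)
The plan is to reduce everything to the cusps via Lemma \ref{LemExt}. Since that lemma gives an exact equivalence between finite locally free sheaves on $X_1^\Delta(\frn)_R$ and the category $\cC_R$, and its quasi-inverse is exact, it suffices to check three things:
\begin{itemize}
\item the sequence (\ref{EqnHodgeFilUniv}) lifts to a sequence in $\cC_R$;
\item the terms of that lifted sequence are the objects corresponding to $\bar{\omega}_\univ^\Delta$, $\bar{\cH}_{\dR,\univ}^\Delta$ and $(\bar{\omega}_\univ^\Delta)^\vee$;
\item the lifted sequence is exact on the modification side.
\end{itemize}

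First I fix the modifications. For $\bar{\omega}_\univ^\Delta$ it is $\sB^\Delta_R dX$, and for $\bar{\cH}_{\dR,\univ}^\Delta$ it is $\sM=\sB^\Delta_R dX\oplus \sB^\Delta_R\eta$. Since the equivalence is compatible with duals, the modification for $(\bar{\omega}_\univ^\Delta)^\vee$ is $\sB^\Delta_R (dX)^\vee$, with $(dX)^\vee$ the dual basis. It then remains to show two things about the pull-back of $\prjt$ to $B^\Delta_R$: it sends $\sM$ onto $\sB^\Delta_R(dX)^\vee$, and its kernel inside $\sM$ is exactly $\sB^\Delta_R dX$. $\bF_q^\times$-equivariance of the restricted maps is automatic from the definition of morphisms in $\cC_R$.

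Next I compute $\prjt$ explicitly over $B^\Delta_R$ using \S\ref{SubsecTrivCase}, applied to $\TD^\td(\Lambda)|_{B^\Delta_R}$ via (\ref{EqnEunivPullBackP}) with $\mathbf{a}_i=a_i$.
\begin{itemize}
\item By (\ref{EqnELie}), the Hodge projection sends $b_1\tau+b_2\tau^2$ to $a_2^{-1}(a_2b_1-a_1b_2)\frac{d}{dY}$.
\item By Corollary \ref{CorH-FunctionOnP}, $(T^\td)^*\mathfrak{h}=b_h(dX)^{\otimes q+1}$. Then (\ref{EqnAutodualHodge}) gives $\AD^*(dY)=b_h\,dX$, where $dY$ corresponds to $l^{\otimes q}$.
\item Dualizing, $(\AD^*)^\vee$ sends $(dX)^\vee$ to $b_h\frac{d}{dY}$, so its inverse sends $\frac{d}{dY}$ to $b_h^{-1}(dX)^\vee$.
\end{itemize}
Combining these,
\[
\prjt(b_1\tau+b_2\tau^2)=b_h^{-1}a_2^{-1}(a_2b_1-a_1b_2)(dX)^\vee .
\]
Substituting (\ref{EqnDfnDX}) gives $\prjt(dX)=0$, as it must. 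Substituting (\ref{EqnDfnEta}), where $b_1=-x^2l(x)$ and $b_2=0$, gives $\prjt(\eta)=-b_h^{-1}x^2l(x)(dX)^\vee$.

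The key point, and the only place where autoduality really matters, is that this coefficient is a unit of $\sB^\Delta_R$. This holds because $b_h\in x(\sB^\Delta_R)^\times$ by Corollary \ref{CorH-FunctionOnP} and $l(x)\in x^{-1}(\sB^\Delta_R)^\times$ by (\ref{EqnDfnLx}). The two factors of $x$ cancel, so $x^2l(x)b_h^{-1}\in(\sB^\Delta_R)^\times$. Without the twist by $b_h$, the image would be off by a power of $x$ and would not be surjective onto a lattice of the right shape.

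Since $\sM$ is free on $dX,\eta$, the restricted map $\sM\to\sB^\Delta_R(dX)^\vee$ is surjective with kernel $\sB^\Delta_R dX$. Thus
\[
0\to \sB^\Delta_R dX\to\sM\to \sB^\Delta_R(dX)^\vee\to0
\]
is exact, and together with (\ref{EqnHodgeFilUniv}) this gives an exact sequence in $\cC_R$. Applying the exact quasi-inverse of Lemma \ref{LemExt} produces $\bar{\prjt}$ and the claimed exact sequence on $X_1^\Delta(\frn)_R$ extending (\ref{EqnHodgeFilUniv}).

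The main obstacle is bookkeeping rather than ideas. The risk is getting the identifications right: the direction of $\AD^*$ versus its dual, the conventions $dX=l^{\otimes-1}$ and $\frac{d}{dY}=l^{\otimes -q}$, and the compatibility of the inclusion $\omega\to\cH_\dR$ with the modifications (which holds since $dX\in\sM$ by construction). Once the formula for $\prjt$ is pinned down, the unit computation is immediate.
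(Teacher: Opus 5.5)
Your proposal is correct and follows the paper's proof essentially step for step. Like the paper, you reduce via Lemma \ref{LemExt}, using that the dual of $\bar{\omega}_\univ^\Delta$ corresponds to $\Hom_{\sB^\Delta_R}(\sB^\Delta_R dX,\sB^\Delta_R)$. You compute $(T^\td)^*\prjt$ as $b_1\tau+b_2\tau^2\mapsto \frac{a_2b_1-a_1b_2}{a_2b_h}(dX)^\vee$, with the same handling of $\AD^*$ and its dual, and you conclude from $b_h\in x(\sB^\Delta_R)^\times$ and $l(x)\in x^{-1}(\sB^\Delta_R)^\times$ that $\prjt(\eta)$ is a unit multiple of the generator.
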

\begin{proof}
	By Lemma \ref{LemExt}, it is enough to show that the map $(T^\td)^*(\prjt)$ induces a surjection $(\sT^\td)^*\bar{\cH}_{\dR,\univ}^\Delta\to 
	(\sT^\td)^*((\bar{\omega}_{\univ}^\Delta)^\vee)$ whose kernel is $\sB^\Delta_R dX$.
	Since the formation of the linear dual of a finite locally free sheaf is compatible with any base change, for the $\cO_{X_1^\Delta(\frn)_R}$-module $
	\bar{\cF}$ corresponding to an object $(\cF,\sM)$ of the category $\cC_R$ of Lemma \ref{LemExt}, we see that $(\sT^\td)^*(\bar{\cF}^\vee)$ is the 
	coherent sheaf corresponding to the $\sB^\Delta_R$-module
	\[
	\Hom_{\sB^\Delta_R}(\sM,\sB^\Delta_R).
	\]
	
	Let us describe the map $(T^\td)^*(\prjt)$.
	Since the formation of the dual Drinfeld module is compatible with base extension, (\ref{EqnEunivPullBackP}) implies that 
	the pull-back of $(E^\Delta_\univ)^D$ by the map
	$T^\td$ is $\TD^\td(\Lambda)^D|_{B^\Delta_R}$. By (\ref{EqnIdentificationXY}), for $Y=X^{\otimes -q}$ we have
	\[
	\TD^\td(\Lambda)^D=\Spec(R((x^{q-1}))[Y]),\quad \Phi^{\TD^\td(\Lambda)^D}_t(Y)=\theta Y-a_1a_2^{-1} Y^q+a_2^{-q}Y^{q^2}.
	\]
	
	
	Note that the pull-back of the exact sequence (\ref{EqnHodgeFilNaiveUniv}) by $T^\td$ is
	\begin{equation}\label{EqnExactPropExtHodgeFil}
		\xymatrix{
			0 \ar[r] & B^\Delta_R dX \ar[r] &  B^\Delta_R dX\oplus B^\Delta_R\eta \ar[r] & B^\Delta_R\frac{d}{dY}\ar[r] & 0,
		}
	\end{equation}
	where the element $\eta$ is defined by (\ref{EqnDfnEta}).
	The left map of (\ref{EqnExactPropExtHodgeFil}) is the natural inclusion. By (\ref{EqnELie}), the right map equals
	\[
	b_1 \tau+b_2\tau^2\mapsto \frac{a_2b_1-a_1b_2}{a_2}\frac{d}{dY}\in \Lie(\TD^\td(\Lambda)^D|_{B^\Delta_R}).
	\]
	By Corollary \ref{CorH-FunctionOnP}, for an element $b_h\in x(\sB_R^\Delta)^\times\subseteq (B_R^\Delta)^\times$, we have
	\[
	(T^\td)^*(\prjt): B^\Delta_R\tau\oplus B^\Delta_R\tau^2 \to B^\Delta_R\frac{d}{dX},\quad b_1\tau+b_2\tau^2\mapsto \frac{a_2b_1-a_1b_2}{a_2b_h}\frac{d}{dX}. 
	\]
	Then (\ref{EqnDfnLx}) implies
	\[
	(T^\td)^*(\prjt)(\eta)=-\frac{x^2l(x)}{b_h}\frac{d}{dX}\in (\sB^\Delta_R)^\times \frac{d}{dX}.
	\]
	Hence (\ref{EqnExactPropExtHodgeFil}) induces an exact sequence of $\sB^\Delta_R$-modules
	\[
	\xymatrix{
		0 \ar[r] & \sB^\Delta_R dX \ar[r] &  \sB^\Delta_R dX\oplus \sB^\Delta_R\eta \ar[r] & \sB^\Delta_R\frac{d}{dX}\ar[r] & 0.
	}
	\]
	This concludes the proof of the theorem.
\end{proof}

\subsection{Kodaira--Spencer isomorphism and autoduality}\label{SubsecExtKS}

By the proof of \cite[Corollary 4.2]{Ha_HTT} and Remark \ref{RmkFixHTT}, we know that the map 
\[
\KS^\vee_{E^\Delta_\univ/Y_1^\Delta(\frn)_R/R}:\omega_{E^\Delta_\univ}\otimes_{\cO_{Y_1^\Delta(\frn)_R}} \omega_{(E^\Delta_\univ)^D}\to 
\Omega^1_{Y_1^\Delta(\frn)_R/R}
\]
is an isomorphism. By composing it with the map (\ref{EqnAutodualHodge}) for the universal object $(E_\univ^\Delta,\lambda_\univ,\mu_\univ)$, we obtain an 
isomorphism
\[
\KS^\curlyvee: (\omega_\univ^\Delta)^{\otimes 2}\to \Omega_{Y_1^\Delta(\frn)_R/R}^1
\] 
of invertible sheaves on $Y_1^\Delta(\frn)_R$

\begin{thm}\label{ThmKS}
	The map $\KS^\curlyvee$ extends to an isomorphism
	\[
	\bar{\KS}^\curlyvee:  (\bar{\omega}_\univ^\Delta)^{\otimes 2}\to \Omega_{X_1^\Delta(\frn)_R/R}^1(2\Cusps_R^\Delta)
	\]
	of invertible sheaves on $X_1^\Delta(\frn)_R$.
\end{thm}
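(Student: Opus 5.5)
We use Lemma \ref{LemExt} together with a reduction to the $F$-finite case, and then do an explicit computation on $\cP^\Delta_R$ using Tate--Drinfeld modules.

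\textbf{Step 1: reduction to $R=A[1/\frn]$.} The source $(\bar{\omega}^\Delta_\univ)^{\otimes 2}$ is compatible with base change of excellent regular domains by Lemma \ref{LemExt}. So is the target $\Omega^1_{X_1^\Delta(\frn)_R/R}(2\Cusps_R^\Delta)$, since $X_1^\Delta(\frn)_R$ is smooth over $R$ and $\Cusps_R^\Delta$ base changes well by Lemma \ref{LemBCCusps}. The map $\KS^\curlyvee$ is compatible with base change by the diagram (\ref{EqnDiagKSBCR}) and by the compatibility of $\mathfrak{h}$, which is defined over $A[1/\frn]$. Moreover, a map between invertible sheaves that extends over $X$ is unique, because $X$ is integral and $Y$ is dense in it. It therefore suffices to treat $R=A[1/\frn]$, or any $F$-finite excellent regular domain, and then base change.

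\textbf{Step 2: reduction to the cusps.} By Lemma \ref{LemExt}, applied to morphisms of invertible sheaves, it is enough to check the following. The map $(T^\td)^*\KS^\curlyvee$ sends $\sB^\Delta_R(dX)^{\otimes 2}$ isomorphically onto the modification $(\sT^\td)^*\Omega^1_{X_1^\Delta(\frn)_R/R}(2\Cusps_R^\Delta)$ inside $(T^\td)^*\Omega^1_{Y_1^\Delta(\frn)_R/R}$. $\bF_q^\times$-equivariance is automatic because the map comes from $Y$. By Corollary \ref{CorBYFormallyEtale} and Remark \ref{RmkFixDMC} (which uses $F$-finiteness), this modification is the free $\sB^\Delta_R$-module generated by $dx/x^2$, viewed inside $\Omega^1_{B^\Delta_R/R}=B^\Delta_R\,dx$.

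\textbf{Step 3: computing $\KS^\vee$ on Tate--Drinfeld modules.} By the diagram (\ref{EqnDiagKSBCB}) and the formal étaleness of $T^\td$, the pullback of $\KS^\vee$ is the Kodaira--Spencer map of $\TD^\td(\Lambda)|_{B^\Delta_R}$ over $R$. Take $D=\frac{d}{dx}$. Using (\ref{EqnDfnDX}), $\nabla_D(dX)=\frac{da_1}{dx}\tau+\frac{da_2}{dx}\tau^2$. Then (\ref{EqnELie}) gives
\[
\pi_D(dX)=a_2^{-1}\Bigl(a_2\frac{da_1}{dx}-a_1\frac{da_2}{dx}\Bigr)\frac{d}{dY}=l(x)\frac{d}{dY}.
\]
Hence $\KS^\vee(dX\otimes dY)=l(x)\,dx$. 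Composing with $\AD^*$, which by (\ref{EqnAutodualHodge}) and Corollary \ref{CorH-FunctionOnP} sends $dY$ to $b_h\,dX$ up to the identification $Y=X^{\otimes -q}$, we get
\[
\KS^\curlyvee\bigl((dX)^{\otimes 2}\bigr)=b_h^{-1}l(x)\,dx .
\]
By (\ref{EqnDfnLx}) we have $l(x)\in x^{-1}(\sB^\Delta_R)^\times$, and by Corollary \ref{CorH-FunctionOnP} we have $b_h\in x(\sB^\Delta_R)^\times$. Therefore
\[
b_h^{-1}l(x)\,dx\in (\sB^\Delta_R)^\times\,\frac{dx}{x^2},
\]
which is exactly the required generator, and this completes the argument.

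\textbf{Main obstacle.} The delicate point is Step 3's bookkeeping: whether the composite $\AD^*$ contributes $b_h$ or $b_h^{-1}$, and matching $dY$, $d/dY$ with the identifications in (\ref{EqnIdentificationXY}). If the sign of the exponent were wrong, the generator would be off by $x^{\pm 2}$. So I would recheck this against the proof of Theorem \ref{ThmExtProjToLie}, where the same $b_h$ appears in the denominator. The other concern is the justification of Step 1: one must ensure the extended map is canonical, so that the base change from $A[1/\frn]$ agrees with $\KS^\curlyvee$ over $R$.
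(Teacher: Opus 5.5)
Your proposal is correct and follows the paper's proof essentially step for step. You reduce to the $F$-finite case $R=A[1/\frn]$ via Lemma \ref{LemBCCusps}, (\ref{EqnDiagKSBCR}) and Lemma \ref{LemExt}, and then check on $\cP^\Delta_R$ that $(dX)^{\otimes 2}\mapsto b_h^{-1}dX\otimes dY\mapsto b_h^{-1}l(x)\,dx\in(\sB^\Delta_R)^\times x^{-2}dx$. Your bookkeeping $\AD^*(dY)=b_h\,dX$ is right, and the only minor difference is that you derive $\KS^\vee(dX\otimes dY)=l(x)\,dx$ directly from (\ref{EqnEdX}) and (\ref{EqnELie}), whereas the paper passes to $\TD(\Lambda)_{/R}$ over $R((x))$ through the \'{e}tale map $R((x))\to B^\Delta_R$ and cites \cite[Lemma 4.1]{Ha_HTT}.
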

\begin{proof}
	By Lemma \ref{LemBCCusps}, (\ref{EqnDiagKSBCR}) and the compatibility with base extension of Lemma \ref{LemExt}, it is enough to show the theorem for $R=A[1/
	\frn]$.
	Thus we may assume that $R$ is $F$-finite.
	
	By Lemma \ref{LemExt}, it is enough to show that the map $(T^\td)^*\KS^\curlyvee$ induces an isomorphism 
	\[
	(\sT^\td)^*((\bar{\omega}_\univ^\Delta)^{\otimes 2})\to (\sT^\td)^*(\Omega_{X_1^\Delta(\frn)_R/R}^1(2\Cusps_R^\Delta)).
	\]
	By Corollary \ref{CorBYFormallyEtale} and (\ref{EqnDiagKSBCB}), 
	the map $(T^\td)^*\KS^\curlyvee$ is identified with a similar map 
	\begin{equation}\label{EqnKScvTDB}
	\KS^\curlyvee_{\TD^\td|_{B^\Delta_R}}:  (B^\Delta_R dX)^{\otimes 2}\to B^\Delta_R dX \otimes_{B^\Delta_R} B^\Delta_R dY\overset{\KS^\vee}{\to} 
	\Omega^1_{B^\Delta_R/R}
	\end{equation}
	for $\TD^\td(\Lambda)|_{B^\Delta_R}$, where $\KS^\vee=\KS^\vee_{\TD^\td(\Lambda)|_{B_R^\Delta}/B_R^\Delta/R}$ and the left map is given by 
	\[
	(dX)^{\otimes 2}\mapsto b_h^{-1}dX\otimes dY
	\]
	with the element $b_h$ of Corollary \ref{CorH-FunctionOnP}.
	By (\ref{EqnIsomDeltaDeprive}), the map $R((x^{q-1}))\to B^\Delta_R$ factors through an \'{e}tale map $R((x))\to B^\Delta_R$. 
	Thus, by (\ref{EqnDiagKSBCB}), the right map of (\ref{EqnKScvTDB}) agrees with the pull-back to $B^\Delta_R$ of the map
	\[
	\KS^\vee_{\TD(\Lambda)_{/R}/R((x))/R}:  R((x))dX\otimes_{R((x))} R((x))dY\to \Omega^1_{R((x))/R}
	\]
	for $\TD(\Lambda)_{/R}$. Then \cite[Lemma 4.1]{Ha_HTT} yields
	\[
	\KS^\curlyvee_{\TD^\td|_{B^\Delta_R}}((dX)^{\otimes 2})=b_h^{-1}l(x)dx.
	\]
	
	By Corollary \ref{CorBYFormallyEtale}, we have an isomorphism
	\[
	\Omega_{X_1^\Delta(\frn)_R/R}^1(2\Cusps_R^\Delta)|_{\widehat{\Cusps}_R^\Delta}\to \Omega^1_{\widehat{\Cusps}_R^\Delta/R}(2\Cusps_R^\Delta).
	\]
	Then \cite[Theorem 6.3 (3)]{Ha_DMC} and Remark \ref{RmkFixDMC} show that this is a free $\cO(\widehat{\Cusps}_R^\Delta)$-module of rank one
	generated by $x^{-2}dx$. 
	Thus we have an isomorphism of $\sB^\Delta_R$-modules
	\[
	(\sT^\td)^*(\Omega^1_{X_1^\Delta(\frn)_R/R}(2\Cusps_R^\Delta))\to x^{-2}\sB^\Delta_R dx.
	\]

	Since (\ref{EqnDfnLx}) and Corollary \ref{CorH-FunctionOnP} imply $b_h^{-1}l(x)\in x^{-2}(\sB^\Delta_R)^\times$, the map $\KS^\curlyvee_{\TD^\td|
	_{B^\Delta_R}}$ induces 
	an isomorphism of $\sB^\Delta_R$-modules
	\[
	 (\sB^\Delta_R dX)^{\otimes 2}\to x^{-2}\sB^\Delta_R dx.
	\]
	Hence the theorem follows from Lemma \ref{LemExt}.
\end{proof}

\subsection{Arithmetic de Rham pairing}\label{SubsecArithDRPairing}

Let $S=\Spec(B)$ be an affine scheme over $A[1/\frn]$ and let $E=(\cL_E,\Phi^E)$ be a Drinfeld module of rank two over $S$ with 
a $\Gamma_1^\Delta(\frn)$-structure $(\lambda,\mu)$.
Let $f:S\to Y_1^\Delta(\frn)$ be the morphism that the isomorphism class of $(E,\lambda,\mu)$ defines. We define a map
\[
\langle-,-\rangle_E:\DR(E,\Ga)\otimes_{B}\DR(E,\Ga)\to B
\]
as follows. By (\ref{EqnIdentifyDR}), we have an isomorphism
\[
\cL_E^{\otimes -q}(S)\oplus \cL_E^{\otimes -q^2}(S)\to \DR(E,\Ga)
\]
by which we identify the source with the target. For any element $\varphi\in \DR(E,\Ga)$, write
\[
\varphi=(\varphi_1,\varphi_2),\quad \varphi_i\in \cL_E^{\otimes -q^i}(S).
\]

Let $\varphi,\varphi'\in \DR(E,\Ga)$. Following \cite[(7.14)]{Gek_dR}, we define
\[
\langle\varphi,\varphi'\rangle_E:=(\varphi_1\otimes \varphi'_2-\varphi'_1\otimes \varphi_2)\otimes (f^*\mathfrak{h})^{\otimes -q}\in B.
\]
Since $f^*\mathfrak{h}$ is nowhere vanishing, this pairing is $B$-linear, alternating and perfect. In particular, we have an $\cO_{Y_1^\Delta(\frn)_R}$-linear
alternating perfect pairing
\[
\langle-,-\rangle_\univ^\Delta:\cH_{\dR,\univ}^\Delta\otimes_{\cO_{Y_1^\Delta(\frn)_R}}\cH_{\dR,\univ}^\Delta\to \cO_{Y_1^\Delta(\frn)_R},
\]
which we call the arithmetic de Rham pairing.

\begin{lem}\label{LemArithDeRhamPairVSHodgeFil}
	The pairing
	\[
	\omega^\Delta_\univ\otimes_{\cO_{Y_1^\Delta(\frn)_R}} (\omega^\Delta_\univ)^\vee\to \cO_{Y_1^\Delta(\frn)_R},\quad 
	\varphi\otimes\prjt
	(\varphi')\mapsto \langle\varphi,\varphi'\rangle_\univ^\Delta
	\]
	that the exact sequence (\ref{EqnHodgeFilUniv}) and the arithmetic de Rham pairing induce is equal to the canonical pairing.
\end{lem}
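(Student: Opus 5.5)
The claim is local on $Y_1^\Delta(\frn)_R$. I would therefore check it Zariski locally on an affine open $\Spec(B)$ over which $\cL_E$ is trivialized, writing $\cL_E=\cO_S l$. On such an open everything becomes explicit through the description of \S\ref{SubsecTrivCase}:
\[
\omega_E=B\,dX,\qquad \DR(E,\Ga)=B\tau\oplus B\tau^2,\qquad \alpha_i=\mathbf{a}_i l^{\otimes 1-q^i},
\]
with $\tau\leftrightarrow l^{\otimes -q}$ and $\tau^2\leftrightarrow l^{\otimes -q^2}$. I also write $f^*\mathfrak{h}=\mathbf{h}\,l^{\otimes -1-q}$ with $\mathbf{h}\in B^\times$. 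By (\ref{EqnH-functionSectionRelation}) this satisfies $\mathbf{h}^{q-1}=-\mathbf{a}_2$.

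The first step is to compute the two maps in the sequence (\ref{EqnHodgeFilUniv}). By (\ref{EqnEdX}), the inclusion sends $dX$ to $\varphi=(\mathbf{a}_1,\mathbf{a}_2)$ in the coordinates $(\tau,\tau^2)$. For $\varphi'=b_1\tau+b_2\tau^2$, formula (\ref{EqnELie}) gives
\[
\pi(\varphi')=\mathbf{a}_2^{-1}(\mathbf{a}_2b_1-\mathbf{a}_1b_2)\,\tfrac{d}{dY}.
\]
Next I transport this along the dual of $\AD^*$ from (\ref{EqnAutodualHodge}), which sends $l^{\otimes q}\mapsto l^{\otimes q}\otimes f^*\mathfrak{h}=\mathbf{h}\,l^{\otimes -1}$, that is, $dY\mapsto \mathbf{h}\,dX$. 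Dualizing gives $\frac{d}{dY}\mapsto \mathbf{h}\frac{d}{dX}$, up to checking the direction of the inverse. This yields
\[
\prjt(\varphi')=\mathbf{h}^{\pm1}\mathbf{a}_2^{-1}(\mathbf{a}_2b_1-\mathbf{a}_1b_2)\,\tfrac{d}{dX}.
\]
The sign of the exponent must be fixed carefully. As a consistency check, the proof of Theorem \ref{ThmExtProjToLie} writes the result with $\frac{1}{a_2b_h}$, which suggests the exponent is $-1$.

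The second step is to compute the pairing in the same coordinates:
\[
\langle dX,\varphi'\rangle=(\mathbf{a}_1b_2-b_1\mathbf{a}_2)\,l^{\otimes -q-q^2}\otimes(\mathbf{h}\,l^{\otimes -1-q})^{\otimes -q}=-\mathbf{h}^{-q}(\mathbf{a}_2b_1-\mathbf{a}_1b_2).
\]
On the other side, the canonical pairing gives $\langle dX,\prjt(\varphi')\rangle_{\mathrm{can}}=\mathbf{h}^{-1}\mathbf{a}_2^{-1}(\mathbf{a}_2b_1-\mathbf{a}_1b_2)$. To compare the two, I use $\mathbf{h}^{q-1}=-\mathbf{a}_2$, which gives $-\mathbf{h}^{-q}=\mathbf{h}^{-1}\mathbf{a}_2^{-1}$, and the expressions agree. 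Finally, I note that the pairing is well defined on $\omega\otimes\omega^\vee$: $\langle\varphi,\varphi\rangle=0$ because it is alternating, so the induced map factors through $\prjt$. The identification with the canonical pairing is independent of the choice of $l$ and therefore glues.

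The main obstacle is bookkeeping the direction and normalization of the identifications. These include the identification $\frac{d}{dY}=l^{\otimes -q}$, the dual-basis conventions, and whether $\prjt$ uses $(\AD^*)^\vee$ or its inverse. A single inverted factor of $\mathbf{h}$ or a sign from the alternating convention would break the equality. Getting this right relies on exactly the relation $\mathbf{h}^{q-1}=-\mathbf{a}_2$, whose minus sign must absorb the sign coming from $\varphi_1\varphi'_2-\varphi'_1\varphi_2$.
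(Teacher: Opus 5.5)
Your proposal is correct and follows essentially the same route as the paper: trivialize $\cL_E$ on an affine open, use (\ref{EqnEdX}), (\ref{EqnELie}) and $\mathbf{h}^{q-1}=-\mathbf{a}_2$. The paper only tests the single element $\mathbf{h}\tau$, for which $\prjt(\mathbf{h}\tau)=\frac{d}{dX}$ and $\langle dX,\mathbf{h}\tau\rangle_E=-\mathbf{a}_2\mathbf{h}^{1-q}=1$; your general $\varphi'$ gives the same result. The exponent you left as $\pm1$ follows directly, without appeal to Theorem \ref{ThmExtProjToLie}: since $\AD^*(dY)=\mathbf{h}\,dX$, the dual $(\AD^*)^\vee$ sends $\frac{d}{dX}\mapsto\mathbf{h}\frac{d}{dY}$ (not $\frac{d}{dY}\mapsto\mathbf{h}\frac{d}{dX}$ as you wrote), and $\prjt$ uses its inverse, so the factor is $\mathbf{h}^{-1}$, exactly as in your final computation.
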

\begin{proof}
	It is enough to show the equality of the pairings on any affine open subscheme $U=\Spec(B)$ of $Y_1^\Delta(\frn)_R$ 
	on which $\cL_{E^\Delta_\univ}$ is trivial. Put $E=E^\Delta_\univ|_U$.
	Write $E=\Spec(B[X])$, $E^D=\Spec(B[Y])$ as (\ref{EqnIdentificationXY}) and
	\[
	\Phi^E_t=\theta+ \mathbf{a}_1\tau +\mathbf{a}_2\tau^2\quad (\mathbf{a}_1\in B,\ \mathbf{a}_2\in B^\times).
	\]
	Then we have
	\[
	\mathfrak{h}|_U=b (dX)^{\otimes q+1},\quad b\in B^\times
	\]
	so that $b^{q-1}=-\mathbf{a}_2$.
	
	By (\ref{EqnELie}), the element
	\[
	\tau\in \DR(E,\Ga)=B\tau\oplus B\tau^2
	\]
	satisfies $\prjt(b\tau)=\frac{d}{dX}\in \Lie(E)$ and
	\[
	\langle dX,b\tau\rangle_E=\langle\mathbf{a}_1\tau+\mathbf{a}_2\tau^2,b\tau \rangle_E=-\mathbf{a}_2 b^{1-q}=1.
	\]
	This means that the induced pairing $BdX\otimes_B B\frac{d}{dX}\to B$ is the canonical one.
\end{proof}

\begin{thm}\label{ThmExtDeRhamPair}
	The arithmetic de Rham pairing extends to an $\cO_{X_1^\Delta(\frn)_R}$-linear
	alternating perfect pairing
	\[
	\overline{\langle-,-\rangle}_\univ^\Delta:\bar{\cH}_{\dR,\univ}^\Delta\otimes_{\cO_{X_1^\Delta(\frn)_R}}\bar{\cH}_{\dR,\univ}^\Delta\to 
	\cO_{X_1^\Delta(\frn)_R}
	\]
	such that the pairing 
	\[
	\bar{\omega}^\Delta_\univ\otimes_{\cO_{X_1^\Delta(\frn)_R}} (\bar{\omega}^\Delta_\univ)^\vee\to\cO_{X_1^\Delta(\frn)_R},\quad\varphi\otimes\bar{\prjt}
	(\varphi')\mapsto \overline{\langle\varphi,\varphi'\rangle}_\univ^\Delta
	\]
	that the exact sequence of Theorem \ref{ThmExtProjToLie}
	induces is equal to the canonical pairing.
\end{thm}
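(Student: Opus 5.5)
The plan is to use Lemma \ref{LemExt}. A pairing $\bar{\cH}\otimes\bar{\cH}\to\cO$ is a morphism from the finite locally free sheaf $\bar{\cH}_{\dR,\univ}^\Delta\otimes\bar{\cH}_{\dR,\univ}^\Delta$ to $\cO_{X_1^\Delta(\frn)_R}$. The tensor product corresponds to the object $(\cH\otimes\cH,\sM\otimes_{\sB^\Delta_R}\sM)$ of $\cC_R$, and $\cO$ corresponds to $(\cO,\sB^\Delta_R)$. So it suffices to show that the pull-back $(T^\td)^*\langle-,-\rangle_\univ^\Delta$ on $M\otimes_{B^\Delta_R} M$ maps $\sM\otimes\sM$ into $\sB^\Delta_R$, and that the induced pairing on $\sM$ is perfect. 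Full faithfulness of the functor in Lemma \ref{LemExt} then gives the extension. Being alternating and perfect are properties of the induced map $\bar{\cH}\to\bar{\cH}^\vee$ and of the section in $\bigwedge^2$. These can be checked on $Y_1^\Delta(\frn)_R$, which is dense in the integral scheme $X_1^\Delta(\frn)_R$, together with the $\sB^\Delta_R$-level via the description of duals as $\Hom_{\sB^\Delta_R}(\sM,\sB^\Delta_R)$ given in the proof of Theorem \ref{ThmExtProjToLie}.

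The key computation takes place over $B^\Delta_R$ with the basis $dX=a_1\tau+a_2\tau^2$ and $\eta=-x^2l(x)\tau$. By Corollary \ref{CorH-FunctionOnP}, the section $\mathfrak{h}$ pulls back to $b_h(dX)^{\otimes q+1}$. Under $\tau\leftrightarrow l^{\otimes -q}$ and $\tau^2\leftrightarrow l^{\otimes -q^2}$, the definition gives
\[
\langle b_1\tau+b_2\tau^2,b_1'\tau+b_2'\tau^2\rangle=(b_1b_2'-b_1'b_2)b_h^{-q}.
\]
Since the pairing is alternating, only one value needs computing:
\[
\langle dX,\eta\rangle=(a_1\cdot 0-(-x^2l(x))a_2)b_h^{-q}=x^2l(x)a_2b_h^{-q}=-x^2l(x)b_h^{q-1}b_h^{-q}=-x^2l(x)b_h^{-1}.
\]
Here we use $b_h^{q-1}=-a_2$. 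By (\ref{EqnDfnLx}) we have $x^2l(x)\in x(\sB^\Delta_R)^\times$, and by Corollary \ref{CorH-FunctionOnP} we have $b_h\in x(\sB^\Delta_R)^\times$, so $\langle dX,\eta\rangle\in(\sB^\Delta_R)^\times$. The Gram matrix of the pairing on $\sM=\sB^\Delta_R dX\oplus\sB^\Delta_R\eta$ is therefore $\begin{pmatrix}0&u\\-u&0\end{pmatrix}$ with $u$ a unit. Hence the pairing is $\sB^\Delta_R$-valued and perfect on $\sM$, and it is visibly alternating. The element is also $\bF_q^\times$-invariant: $\theta_c$ scales $dX$ by $c$, fixes $\eta$, and the value is a pulled-back global function. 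This compatibility is in any case automatic, since the pairing is pulled back from $Y_1^\Delta(\frn)_R$.

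For the last assertion, Lemma \ref{LemArithDeRhamPairVSHodgeFil} shows that the induced pairing $\bar{\omega}\otimes\bar{\omega}^\vee\to\cO$ agrees with the canonical one on $Y_1^\Delta(\frn)_R$. Both are maps between locally free sheaves on the integral (flat over the domain $R$) scheme $X_1^\Delta(\frn)_R$ that agree on a dense open, so they agree everywhere. Alternatively, this follows from the faithfulness part of Lemma \ref{LemExt}.

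The main obstacle is the valuation bookkeeping in the key computation: checking that the factor $b_h^{-q}$ exactly cancels the orders of $x^2l(x)$ and $a_2$. This relies on $a_2\in x^{q-1}(\sB^\Delta_R)^\times$ from (\ref{EqnTDCoeff}), and the rest is formal.
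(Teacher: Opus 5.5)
Your proposal is correct and follows essentially the same route as the paper. You compute $\langle dX,\eta\rangle=x^2l(x)a_2b_h^{-q}$ on $B^\Delta_R$ and show it lies in $(\sB^\Delta_R)^\times$; your rewriting as $-x^2l(x)b_h^{-1}$ via $b_h^{q-1}=-a_2$ is only a cosmetic shortcut. You then extend via Lemma \ref{LemExt}, get perfectness from the antidiagonal Gram matrix on $\sM$, and obtain both alternation and agreement with the canonical pairing (via Lemma \ref{LemArithDeRhamPairVSHodgeFil}) from the injectivity of restriction to the dense open $Y_1^\Delta(\frn)_R$ of the integral scheme $X_1^\Delta(\frn)_R$.
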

\begin{proof}
	For the existence of an extension as an alternating $\cO_{X_1^\Delta(\frn)_R}$-linear map, consider the pull-back 
	\[
	\langle-,-\rangle_{B^\Delta_R}:=(T^\td)^*\langle-,-\rangle_\univ^\Delta.
	\]
	Let $dX$ and $\eta$ be the elements of (\ref{EqnDfnDX}) and (\ref{EqnDfnEta}), respectively. By (\ref{EqnDfnLx}) and 
	Corollary \ref{CorH-FunctionOnP}, we have
	\begin{equation}\label{EqnDeRhamPairOnB}
	\langle dX,\eta\rangle_{B^\Delta_R}=x^2l(x)a_2b_h^{-q}\in (\sB_R^\Delta)^\times.
	\end{equation}
	Thus we obtain an extension as an $\cO_{X_1^\Delta(\frn)_R}$-linear map by Lemma \ref{LemExt}. Moreover, since $X_1^\Delta(\frn)_R$ is an integral scheme,
	any nonempty open subset $U$ of it meets $Y_1^\Delta(\frn)_R$ and the restriction map
	\begin{equation}\label{EqnRestrictionMap}
		\cO(U)\to \cO(U\cap Y_1^\Delta(\frn)_R)
	\end{equation}	
	is injective. 
	Since $\langle-,-\rangle_\univ^\Delta$ is alternating, for any section $\varphi\in \bar{\cH}_{\dR,\univ}^\Delta(U)$ the element 
	$\overline{\langle \varphi,\varphi\rangle}_\univ^\Delta$ vanishes on $U\cap Y_1^\Delta(\frn)_R$ and thus $\overline{\langle \varphi,\varphi\rangle}
	_\univ^\Delta=0$.

	For the perfectness of $\overline{\langle-,-\rangle}_\univ^\Delta$, consider the map
	\[
	\bar{\cH}_{\dR,\univ}^\Delta\to \sHom_{\cO_{X_1^\Delta(\frn)_R}}(\bar{\cH}_{\dR,\univ}^\Delta,\cO_{X_1^\Delta(\frn)_R}),\quad \varphi\mapsto (\varphi'\mapsto 
	\overline{\langle\varphi,\varphi'\rangle}_\univ^\Delta).
	\]
	Then its pull-back by $\sT^\td$ is given by the $\sB^\Delta_R$-linear map
	\[
	(dX,\eta)\mapsto ((dX)^\vee, \eta^\vee)\begin{pmatrix}
		0& -x^2l(x)a_2b_h^{-q}\\
		x^2l(x)a_2b_h^{-q}& 0
	\end{pmatrix},
	\]
	where $\{(dX)^\vee,\eta^\vee\}$ is the dual basis. By (\ref{EqnDeRhamPairOnB}), it is an isomorphism of $\sB_R^\Delta$-modules. 
	Hence Lemma \ref{LemExt} shows that the map above is an isomorphism.
	Since we have proved that $\overline{\langle-,-\rangle}_\univ^\Delta$ is alternating,
	we obtain the perfectness.
	
	For the last statement, by Lemma \ref{LemArithDeRhamPairVSHodgeFil}, the two pairings are equal on $Y_1^\Delta(\frn)_R$.
	Then the equality on $X_1^\Delta(\frn)_R$ follows from the injectivity of (\ref{EqnRestrictionMap}).
\end{proof}



\end{document}